\numberwithin{equation}{section}
\newtheorem{thm}{Theorem}[section]
\newtheorem{lem}{Lemma}[section]
\newtheorem{cor}{Corollary}[section]
\theoremstyle{definition}
\newtheorem{defn}{Definition}[section]
\newtheorem{rem}{Remark}[section]
\newcommand{\al}{\alpha}
\newcommand{\ga}{\gamma}
\newcommand{\Ga}{\Gamma}
\newcommand{\De}{\Delta}
\newcommand{\la}{\lambda}
\newcommand{\La}{\Lambda}
\newcommand{\om}{\omega}
\newcommand{\Om}{\Omega}
\def\D{\mathscr{D}}
\def\M{\mathcal{M}}
\def\I{\mathscr{I}}
\def\S{\mathbb{S}}
\def\ss{\mathscr{S}}
\def\R{\mathbb R}
\def\Z{\mathbb Z}
\def\N{\mathbb N}
\def\Q{\mathcal Q}
\def\B{\wt{B}}
\def\w{\mathrm \om}
\def\E{\mathcal{E}}
\def\H{\mathcal{H}}
\def\qq{\widetilde{Q}}
\def\hh{\widetilde{Q}}
\def\lv{\mathcal{LV}_q(\A_{t}^G)}
\def\svq{\mathcal{SV}_q(\A_{t}^G)}
\def\sv{\mathcal{SV}_2(\A_{t}^G)}
\def\1{\mathds{1}}
\def\ll{\mathscr{L}}
\def\wt{\widetilde}
\def\({\left(}
\def\){\right)}
\def\A{\mathbf{A}}
\def\sub{\substack}
\def\L{\mathcal{L}}
\def\g{\mathcal{G}}
\def\d{{\rm d}}
\def\ep{\epsilon}
\def\dd{\widetilde{\mathscr{D}}}
\begin{document}

\title[Variational inequalities for bilinear averaging operators]{Variational Inequalities for Bilinear Averaging Operators over Convex Bodies}

\author[Y. Ding, G. Hong, X. Wu]
{Yong Ding, Guixiang Hong, Xinfeng Wu$^*$}
\bigskip

\address{
School of Mathematical Sciences\\
Laboratory of Mathematics and Complex Systems (BNU)\\
Ministry of Education, China\\
Beijing Normal University\\
Beijing 100875, China}
\email{dingy@bnu.edu.cn}

\address{School of Mathematics and Statistics\\
{and Hubei Key Laboratory of Computational Science\\ Wuhan University\\}
Wuhan 430072, The People's Republic of China}
\email{guixiang.hong@whu.edu.cn}

\address{Department of Mathematics\\
China University of Mining \& Technology\\
Beijing 100083, The People's Republic of China}
\email{wuxf@cumtb.edu.cn}

\thanks{*Corresponding author}

\subjclass[2010]{Primary 42B25, 47B38, 47A35, 47D07; Secondary 46E40, 46B20.}
 \keywords{bilinear averaging operator, bilinear ergodic averaging operator, convex body, $q$-variation}

\begin{abstract}

We study $q$-variation inequality for bilinear averaging operators over convex bodies $(G_t)_{t>0}$ defined by
\begin{align*}
\A_t^G(f_1,f_2)(x) & =\frac{1}{|G_t|}\int_{G_t} f_1(x+y_1)f_2(x+y_2)\, dy_1\, dy_2, \quad x\in \R^d.
\end{align*}
where  $G_t$ are the dilates of a convex body $G$ in $\R^{2d}$.
We prove that
$$\|V_q(\A_t^G(f_1,f_2): t>0) \|_{L^p} \lesssim \|f_1\|_{L^{p_1}} \|f_2\|_{L^{p_2}},$$
for $2<q<\infty$, $1<p_1,p_2\le \infty$, $1/2<p<\infty$ with $1/p=1/p_1+1/p_2$. The target space $L^p$ should be replaced by $L^{p,\infty}$ for $p_1=1$ and/or $p_2=1$, and by dyadic BMO when $p_1=p_2=\infty$.
As applications, we obtain variational inequalities for bilinear discrete averaging operators, bilinear averaging operators of Demeter-Tao-Thiele, and ergodic bilinear averaging operators. As a byproduct, we also obtain the same mapping properties for a new class of bilinear square functions involving conditional expectation, which are of independent interest.
\end{abstract}

\maketitle

\section{Introduction}

For $1\le q<\infty$, the $q$-variation seminorm (or $V_q$ seminorm) of a family $(a_t(x))_{t>0}$ of complex-valued functions is defined as
$$
V_q(a_t(x): t>0)=\sup \bigg\{(\sum_{k\ge 0} |a_{t_{k+1}}(x)-a_{t_{k}}(x)|^q)^{\frac{1}{q}}\bigg\},
$$
where the supremum runs over all increasing sequences $(t_k)_{k\ge 0}$ of positive numbers.
The $q$-variation inequality is a crucial tool
in studying the pointwise convergence for a family of operators, due to the fact that it immediately implies the pointwise convergence of the underlying
family of operators without using the Banach principle via the corresponding maximal
inequality.
Moreover, the variational inequality is stronger than the maximal inequality, in the following sense:
$$\sup_{t>0} |a_t(x)| \le |a_{t_0}(x)| + 2V_q(a_t(x): t>0)$$
for any $t_0>0$, and hence is of more interest.

The first $q$-variation inequality was proved by L\'{e}pingle \cite{L} for martingales.
Bourgain \cite{B} proved variational inequalities for the ergodic averages, and directly deduced pointwise convergence results.
Bourgain's work \cite{B} has inaugurated a new research direction in harmonic analysis
and ergodic theory. In \cite{CJRW,CJRW2,JKRW,JRW,JRW2},
Jones and his collaborators systematically studied variational inequalities for ergodic averages and truncated singular integrals of homogeneous type. Since then  many mathematicians established variational inequalities for various operators in ergodic theory and harmonic analysis   (see  \cite{JSW,JW,DMT,OSTTW,MT1,MT2,M,LX, HLP,MTX,KZ,LW,HM,CDHL,MT,MST,MST2,MTZ,BMSW,BMSW2,Z}, among others).

 The purpose of this article is to  establish variational inequalities for a class of multilinear averaging operators over convex bodies. This is the first $q$-variation inequality in the multi-linear Calder\'{o}n-Zygmund theory.
 In the linear case, similar results on averaging operators were studied first by Bourgain \cite{B}, and subsequently by Jones et al \cite{JKRW, JRW2}. The weighted case was investigated by Ma-Torrea-Xu \cite{MTX,MTX2}. The dimension-free variational inequalities for averaging operators over convex bodies were recently established by Bourgain-Mirek-Stein-Wr\'{o}bel in \cite{BMSW}. For relevant maximal inequalities, we refer the reader to the works of Bourgain \cite{B1,B2} and Carbery \cite{C}.

The present article is concerned with variational inequalities in the multilinear setting.
For $t>0,$ let $\wt{Q}_t=Q_t\times Q_t$ be the cube in $\R^{2d}$ centered at origin of side length $t$. The bilinear averaging operators $\A_t^{\widetilde{Q}}$ over cubes $\widetilde{Q}$ are just the tensor product of two linear averaging operators $A_t^Q$:
\begin{align*}
\A_t^{\widetilde{Q}}(f_1,f_2)(x) & =\left(\frac{1}{|Q_t|}\int_{Q_t} f_1(x+y_1) dy_1\right)\left( \frac{1}{|Q_t|}\int_{Q_t} f_2(x+y_2)\, dy_2\right)\\
&=: A_t^Q(f_1)(x)\cdot A_t^Q(f_2)(x),\quad x\in \R^d.
\end{align*}
Applying the trivial estimate
\begin{align}\label{eq:1.4}
V_q(a_t\cdot b_t: \, t>0) \le \Big(\sup_{t>0} |a_t|\Big)\cdot V_q(b_t:\, t>0)+ \Big(\sup_{t>0}|b_t|\Big)\cdot V_q(a_t:\, t>0),
\end{align}
the desired variational inequalities for $\A_t^{\widetilde{Q}}$ follow easily from known variational inequalities for $A_t^Q$ (see \cite{JRW,HM}) and the Hardy-Littlewood maximal inequality.
However, for multilinear averaging operators over balls (and more generally, over convex bodies), the variational inequalities are not so simple, and cannot be deduced from results in the linear case. Moreover, in contrast to the case of maximal operators, the $q$-variation seminorms are not monotone, i.e., $a_t\le b_t$ for all $t>0$ does \textit{not} imply $V_q (a_t:\,t>0)\le V_q (b_t:\,t>0)$. Hence there is no a priori comparison between $q$-variations of averaging operators over cubes and the ones over balls (or convex bodies), and estimates for one average operator do not imply the same estimates for the other. Therefore, it would be interesting and non-trivial to establish variational inequalities for multilinear averaging operators over convex bodies, which is the main goal of the current paper.

To state our main result, we need to recall some definitions. For notational simplicity, in this article, we shall concentrate on the bilinear case.
Let $G$ be a non-empty convex body in $\R^{2d}$, which means that $G$ is a bounded convex open subset of $\R^{2d}$.
For $t>0$ set
$$G_t=\{(x,y)\in \R^{2d}:\, t^{-1}\cdot (x,y)\in G\}.$$
For $f,g\in L^1_{loc}(\R^d),$  the bilinear averaging operator associated to $G_t$ is defined by
\begin{align*}
\mathbf{A}_t^G(f_1,f_2)(x) & = \frac{1}{|G_t|}\int_{G_t} f_1(x+y_1)f_2(x+y_2)\, dy_1\, dy_2,\quad x\in \R^d.
\end{align*}
The dyadic BMO space, $BMO_\d$, is defined via the seminorm:
$$
\|f\|_{BMO_\d}  \simeq \sup_{Q \, \mathrm{dyadic}} \inf_{a_Q} \,  \frac{1}{|Q|}\int_Q |f(x)-a_Q|\, dx.
$$

The main result of this article is the following:
\begin{thm}\label{thm:main1}
 Let $2 < q < \infty$. Then the following statements are true:
\begin{enumerate}
  \item [\rm (i)] For $1/2<p<\infty$, $1<p_1,p_2\le\infty$ satisfying $1/p=1/p_1+1/p_2$ there exists a constant $C=C(p_1,p_2,q,d)$ such that
  for any $f_1 \in L^{p_1}(\R^d)$, $f_2\in L^{p_2}(\R^d)$,
\begin{equation*}
 \|V_q(\A_t^G(f_1,f_2):t>0)\|_p \le C \|f_1\|_{p_1}\, \|f_2\|_{p_2};
\end{equation*}
For either $p_1=\infty$ or $p_2=\infty,$ $L^\infty$ should be replaced by $L^\infty_c.$
  \item [\rm(ii)]  For any $\al>0$, $1\le p_2\le \infty$, $1/2\le p \le 1$
satisfying $1+1/p_2=1/p$,  there exists a
  constant $C=C(p_2,q,d)$ such that for any $f_1 \in L^1(\R^d)$, $f_2\in L^{p_2}(\R^d)$,
\begin{equation*}
|\{V_q(\A_t^G(f_1,f_2):t>0) > \al\}|  \le  \frac{C}{\al^p} \|f_1\|_1^p\, \|f_2\|_{p_2}^p;
\end{equation*}
When $p_2=\infty,$ $L^\infty$ should be replaced by $L^\infty_c.$
  \item [\rm(iii)] If $p_1=p_2=\infty,$ then there exists a constant $C=C(d)$ such that
  for any $f_1,\, f_2 \in  L^\infty_c (\R^d)$ (the space of all compactly supported $L^\infty$ functions),
\begin{equation*}
 \|V_q(\A_t^G(f_1,f_2):t>0)\|_{BMO_{\d}}\le C \|f_1\|_\infty \|f_2\|_\infty.
\end{equation*}
\end{enumerate}
\end{thm}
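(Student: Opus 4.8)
The plan is to decompose the bilinear average in a way that separates the "smooth/cube-like" part, where tensor-product structure and linear variational inequalities apply, from a genuinely bilinear remainder that one can control by bilinear square functions. First I would reduce matters, via the slicing/layer-cake representation of the convex body, to an averaged family of what are essentially products of two one-parameter cube averages. Concretely, writing $\mathbf 1_{G_t}(y_1,y_2)$ as an average over its level sets and using that each slice $\{y_2:\ (y_1,y_2)\in G_t\}$ is itself (after affine renormalization) a convex body in $\R^d$, one is led to study, for fixed auxiliary parameters, expressions of the form $\frac{1}{|H_t|}\int_{H_t} f_1(x+y_1) A_{s(t,y_1)}^{K}(f_2)(x)\,dy_1$. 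The nonmonotonicity of $V_q$ noted in the introduction forces us not to dominate pointwise but to build the variation estimate directly.

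The key steps, in order, are: (1) Establish an \emph{a priori} $L^2\times L^2\to L^1$ (and $L^2\times L^\infty\to L^2$, $L^\infty\times L^\infty\to BMO_\d$) bound for $V_q(\mathbf A_t^G(f_1,f_2))$ along a \emph{lacunary} sequence $t=2^k$, using a bilinear square function of jump type; this is the step where one introduces the conditional-expectation bilinear square functions advertised in the abstract and proves their boundedness (via a Carleson-measure / stopping-time argument on dyadic cubes, or by interpolating a trivial $L^\infty$ bound against an $L^2$ bound obtained from orthogonality of martingale differences). (2) Handle the \emph{short variation} — the oscillation of $t\mapsto \mathbf A_t^G$ between consecutive dyadic scales $2^k\le t<2^{k+1}$ — by a $\sum_k$ of $L^2(\ell^2)$-type estimates, exploiting that on each such block the kernel $|G_t|^{-1}\mathbf 1_{G_t}$ varies in a $C^1$, scale-$2^k$ manner, so a mean-value / fundamental-theorem-of-calculus bound plus the bilinear maximal inequality (Carbery-type, or the Lacey bilinear maximal theorem) closes it. (3) Combine the lacunary and short pieces to get the full $V_q$ bound at the endpoint exponents, then (4) run bilinear Calderón–Zygmund interpolation and the multilinear Marcinkiewicz-type machinery — vector-valued Calderón–Zygmund theory applied to the $V_q$-valued operator, together with a bilinear weak-type $(1,1,\cdot)$ decomposition — to descend to all $1/2<p<\infty$, $1<p_1,p_2\le\infty$, with the stated $L^{p,\infty}$ substitute when some $p_i=1$ and the $BMO_\d$ substitute when $p_1=p_2=\infty$; the $L^\infty_c$ restriction is exactly what makes the BMO endpoint well-posed and what lets the bilinear CZ kernel estimates be used off the support.

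The main obstacle I expect is step (1)–(2) at the endpoint: proving the $L^2\times L^2\to L^{1,\infty}$ (or $L^1$) bound for the bilinear jump/variational square function associated to the convex body. Unlike the cube case, there is no factorization $a_t\cdot b_t$ to feed into \eqref{eq:1.4}, and $V_q$ is not monotone, so one genuinely needs a bilinear orthogonality/almost-orthogonality mechanism. My plan for this is to dominate the long variation by a conditional-expectation bilinear square function $\big(\sum_k |\mathbf A_{2^{k+1}}^G(f_1,f_2)-\mathbf A_{2^k}^G(f_1,f_2)|^2\big)^{1/2}$ and, after further decomposing $\mathbf A_{2^k}^G$ against the dyadic martingale $E_k$, split into a "diagonal" part $\sum_k |E_k f_1\cdot E_k f_2 - E_{k-1}f_1\cdot E_{k-1}f_2|^2$ (handled by the product rule $E_kf_1\,E_kf_2-E_{k-1}f_1\,E_{k-1}f_2 = (E_kf_1-E_{k-1}f_1)E_kf_2 + E_{k-1}f_1(E_kf_2-E_{k-1}f_2)$ and Cauchy–Schwarz against the two martingale square functions) and an "off-diagonal" error $\mathbf A_{2^k}^G - E_k\otimes E_k$ which, by the $C^1$ smoothness of the normalized convex-body kernel, is a bilinear operator with rapidly decaying Fourier/kernel tails to which a Cotlar–Stein-type bilinear almost-orthogonality estimate applies. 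Getting the weak-$(1,1)$ endpoint for this square function — rather than just $L^2$ — is the delicate point, and I would obtain it by a Calderón–Zygmund decomposition of $f_1$ at height $\alpha$ combined with the size/smoothness bounds on the (matrix-valued, $\ell^2$-indexed) bilinear kernel.
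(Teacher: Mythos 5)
Your broad strategy closely tracks the paper's: split into long and short variation, compare $\A_{2^k}^G$ against the dyadic martingale product $\E_k f_1 \cdot \E_k f_2$, bound the resulting bilinear square function $\L = \bigl(\sum_k |\A_{2^k}^G - \E_k\otimes\E_k|^2\bigr)^{1/2}$, run a Calder\'on--Zygmund decomposition for the $L^1$-type endpoints, and interpolate. That is the right skeleton. But several of the steps as you describe them contain genuine gaps that the paper's argument is specifically engineered to fill, and a few of your suggested tools do not apply.

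First, the opening ``slicing/layer-cake'' reduction to a family of one-parameter cube averages $\frac{1}{|H_t|}\int_{H_t} f_1(x+y_1)\, A^K_{s(t,y_1)}(f_2)(x)\,dy_1$ is not pursued in the paper and is unlikely to work: averaging a $V_q$-seminorm over $y_1$ does not commute with taking $V_q$, and since $V_q$ is not monotone (as the paper stresses) one cannot pass this pointwise domination through the variation norm. You in fact abandon this reduction, so it is a red herring in your own plan as well.

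Second, your crucial step (1)--(2) --- bounding the off-diagonal error $\A_{2^k}^G - \E_k\otimes \E_k$ by a ``Cotlar--Stein bilinear almost-orthogonality estimate'' justified by ``$C^1$ smoothness / rapid Fourier decay'' --- omits the two genuinely new tools the paper needs. (a) After the paraproduct decomposition $\L_k(f_1,f_2)=\sum_n \L_k(d_{1,n},\E_{n-1}f_2)+\sum_n \L_k(\E_n f_1, d_{2,n})$, the off-diagonal regime $n\le k$ is closed not by Fourier decay but by a \emph{uniform-in-$n$ Carleson measure estimate} for the shifted martingale differences $|\E_{k+1-n}b - \E_{k-n}b|^2\,dx\,\delta_{2^k}$ (a martingale analogue of Grafakos--Miyachi--Tomita; see Lemmas \ref{Lem 4.1} and \ref{lem: 2.4}). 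Because one factor lives only in $L^\infty$/BMO and the other in $L^2$, plain orthogonality does not close; the Carleson/Carleson-embedding step is what replaces it. (b) In the near-diagonal regime $n>k$ the paper's geometric boundary-layer argument requires an $L^2$ bound for a bi-sublinear maximal function, and the na\"ive estimate $\int (h_1^*)^2(h_2^*)^2\lesssim\int |h_1|^2|h_2|^2$ is \emph{false} (the paper gives a counterexample after Lemma \ref{martingale-square}); the fix is the two-sided maximal function $[h_1,h_2]^+ = \max\{(h_1^*\cdot|h_2|)^*, (|h_1|\cdot h_2^*)^*\}$. Your plan never confronts this obstruction.

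Third, your short-variation step invokes ``the Lacey bilinear maximal theorem.'' That theorem concerns $\frac{1}{2T}\int_{-T}^T f(x+t)g(x-t)\,dt$ and is a time-frequency result; it is not relevant here. For convex-body bilinear averages one has the trivial pointwise domination $\A_t^G(|f_1|,|f_2|)\lesssim \M f_1 \cdot \M f_2$ from $G_t\subset \widetilde B_t$, so no bilinear time-frequency input is needed (nor would it supply the $V_q$-valued estimate anyway).

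Finally, your interpolation step proposes ``vector-valued multilinear Calder\'on--Zygmund theory applied to the $V_q$-valued operator.'' The paper explicitly notes that the bilinear square function $\L$ (and a fortiori the $V_q$ operator) \emph{falls outside} the multilinear vector-valued CZ framework in the literature; this is precisely why the authors build the Carleson-measure and new maximal-function machinery. Instead they establish restricted weak types at the five corners $(1,\infty,1)$, $(\infty,1,1)$, $(1,1,1/2)$, $(\infty,s,s)$, $(s,\infty,s)$ --- the $(\infty,s,s)$ and $(s,\infty,s)$ cases via one-variable Marcinkiewicz interpolation of the map $T_{f_1}(g)=V_q(\A_t^G(f_1,g))$ between $L^2\to L^2$, $L^1\to L^{1,\infty}$, $L^\infty_c\to BMO_\d$ --- and then apply the Grafakos--Kalton multilinear real interpolation (Lemma \ref{interpolation}). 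That is the route that actually reaches the quasi-Banach range $1/2<p<1$; the vector-valued CZ route you suggest is unlikely to get there and in any case depends on kernel regularity estimates you have not verified.

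So: right decomposition in outline (and in particular the choice of $\L_k = \A_{2^k}^G - \E_k\otimes\E_k$ is exactly the paper's key object), but the core estimates you gesture at --- ``rapidly decaying kernels plus Cotlar--Stein,'' a naïve $L^2$ maximal bound, Lacey's bilinear maximal, and vector-valued bilinear CZ interpolation --- are either false as stated or do not apply, and you would need to replace them by the uniform Carleson measure lemma, the corrected bi-sublinear maximal function, and Grafakos--Kalton restricted weak type interpolation.
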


\begin{rem}
(i) In contrast with the fact that linear variational inequalities are Banach valued,  Theorem \ref{thm:main1} contains both Banach and quasi-Banach valued inequalities (that is, the target space exponent $p$ could be smaller than 1). The constraint $q>2$ coincides with the one in the linear case.

(ii) The operator $V_q(\A_t^G)$ is not of
restricted weak type $(\infty,\infty, \infty),$ that is, the $L^\infty \times L^\infty \rightarrow  L^\infty$ variational inequality fails, even for characteristic functions of measurable sets of finite measures, as will be shown via a counterexample in Section \ref{Sec6} below.
In this sense, the $L^\infty \times L^\infty \rightarrow BMO_\d$ estimate in part (iii) cannot be strengthened.
\end{rem}

 Our strategy of the proof can be described as follows.  We first establish the $L^\infty\times L^2\rightarrow L^2$ variation bounds.
We consider the long and short variation operators separately. For the long variation, matters are reduced to showing the $L^\infty\times L^2\rightarrow L^2$ bound for a new bilinear square function:
$$
\L(f_1,f_2)(x)=\left( \sum_{k\in \Z} | \A_{2^k}^G(f_1,f_2)(x)-\E_k f_1(x) \cdot \E_k f_2(x)|^2 \right)^{\frac{1}{2}},
$$
where $\E_k f$ is the conditional expectation with respect to the $\sigma$-algebra generated by dyadic cubes of side length $2^k$ in $\Bbb R^d.$
Such a square function falls outside the multilinear vector-valued Calder\'{o}n-Zygmund theory studied in literature. To achieve our goal, we shall develop several new tools including
bilinear almost orthogonality principles, a paraproduct type decomposition involving conditional expectation,
uniform Carleson measure estimates,
and new bilinear maximal function. These, together with an adaption of the geometric arguments in \cite{JRW} to the multilinear setting, allow us to conclude the $L^\infty\times L^2\rightarrow L^2$ variational estimate.

Next, based on the $L^\infty\times L^2\rightarrow L^2$ variation estimate, we further prove the $L^\infty\times L^\infty \rightarrow BMO_\d$ estimate, which naturally extends the $L^\infty\rightarrow BMO_\d$ estimate for linear averages in \cite{HM}. We also show, by constructing a counterexample, that
the stronger $L^\infty\times L^\infty\rightarrow L^\infty$ variation estimates fails even for characteristic functions of measurable subsets of finite measures. Regarding the $L^1$ endpoints, we prove the $L^1\times L^{p_2}\rightarrow L^{p,\infty}$ variation estimates for all $1\le p_2\le \infty$ by using a variant of multi-linear Calder\'{o}n-Zygmund decomposition in \cite{GT}.

Combining the above mentioned estimates, we obtain that the $q$-variation operator is of restricted weak types
$(1,\infty,1)$, $(\infty, 1, 1)$,  $(1,1,1/2),$  $(\infty, s, s)$ and $(s,\infty,  s)$, for any $2<s<\infty$. Hence the proof of Theorem \ref{thm:main1} are concluded by applying the multi-linear real interpolation of Grafakos-Kalton in \cite{GK} (see also \cite{Grafakos}). As a byproduct of our arguments, we obtain the same estimates for the bilinear square operator $\L$,
which are of independent interest (see Section \ref{Sec7} below).

Finally, we point out that the techniques used here can be extended to the multi-linear context as well as to the weighted setting, and are expected to be useful for other operators such as the multilinear truncated singular integrals studied by Grafakos-Torres \cite{GT1}. We also mention that in \cite{DOP} Do, Oberlin, and Palsson considered a different class of bilinear averaging operators, which are closely related to the bilinear Hilbert transform. Similar bilinear variational inequalities were proved for $2/3<p<\infty$ by using techniques of time-frequency analysis, which are quite different from ours.

This article is organized as follows. In Section \ref{Sec2}, we prove some lemmas which will be used in the proof of Theorem \ref{thm:main1}. Section \ref{Sec3} is devoted to establishing the $L^\infty \times L^2\rightarrow L^2$ bound. We further prove the $L^\infty \times L^\infty \rightarrow BMO_{\d}$ and $L^1\times L^{p_2}\rightarrow L^{p,\infty}$ variational estimates in Sections \ref{Sec4} and \ref{Sec5} respectively. In Section \ref{Sec6}, we first discuss the restricted weak type estimates and then conclude the proof of Theorem \ref{thm:main1}. The same bounds for the bilinear square function are established in Section \ref{Sec7}. Finally, applications to various bilinear averaging operators are presented in Section \ref{Sec8}.
\section{Preliminaries and some lemmas}\label{Sec2}

In this section, we give some lemmas that will be used in the proof of Theorem \ref{thm:main1}.

\begin{lem}[Bilinear almost orthogonality principle] \label{almost}
{Let $(\S_n)_{n\in \Z}$ be a sequence of operators,}
and let $(\sigma(j))_{j\in \Z}$ be a sequence of positive numbers with $w:=\sum_j \sigma(j)<\infty$.
\begin{enumerate}
  \item [\rm (i)] If $\|\S_k (u_n, v_n)\|_{2} \lesssim \sigma (n-k) a(u_n,v_n)$  for any $n,k\in \Z$,
then
$$\sum_k \left\| \sum_n \S_k\left( u_n, v_n \right) \right\|_{2}^2 \le w \cdot  \sum_{n}a(u_n,v_n)^2.$$
  \item [\rm (ii)] If $\|\S_k (u_n, v_n)\|_{2} \le \sigma (n-k) a_{k}(u_n,v_n)$  for any $n,k\in \Z$,
then
$$\sum_k \left\| \sum_n \S_k\left( u_n, v_n \right) \right\|_{2}^2 \le w \cdot  \sup_n\sum_{k}a_{k}(u_{k-n},v_{k-n})^2.$$
\end{enumerate}

\end{lem}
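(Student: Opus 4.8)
The plan is to treat Lemma~\ref{almost} as a Schur-test (equivalently, Young's-inequality) estimate: the only hypothesis is the pointwise-in-norm bound on each $\S_k(u_n,v_n)$, so once the $n$-sum is pulled outside the $L^2$ norm the problem reduces to an inequality for scalar sequences.

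For part (i): first apply the triangle inequality in $L^2$, getting $\big\|\sum_n \S_k(u_n,v_n)\big\|_2 \lesssim \sum_n \sigma(n-k)\,a(u_n,v_n)$. Then apply Cauchy--Schwarz in $n$ with weight $\sigma(n-k)$: $\big(\sum_n \sigma(n-k)\,a(u_n,v_n)\big)^2 \le \big(\sum_n \sigma(n-k)\big)\big(\sum_n \sigma(n-k)\,a(u_n,v_n)^2\big) = w\sum_n\sigma(n-k)\,a(u_n,v_n)^2$. Finally sum over $k$, interchange the order of summation (justified since all terms are nonnegative and $w<\infty$), and use $\sum_k\sigma(n-k)=w$ once more; this gives $\sum_k\big\|\sum_n\S_k(u_n,v_n)\big\|_2^2 \lesssim w^2\sum_n a(u_n,v_n)^2$, which is the asserted bound. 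The last two steps amount to no more than Young's inequality $\ell^1 * \ell^2 \hookrightarrow \ell^2$ applied to the convolution-type sum $k\mapsto \sum_n \sigma(n-k)\,a(u_n,v_n)$.

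For part (ii) the first step is identical, $\big\|\sum_n\S_k(u_n,v_n)\big\|_2 \le \sum_n\sigma(n-k)\,a_k(u_n,v_n)$, but the extra index on $a_k$ destroys the clean convolution structure, so a little care is needed. The device is to reparametrize by the offset $j := n-k$: putting $b_k(j) := a_k(u_{k+j},v_{k+j})$, one has $\big\|\sum_n\S_k(u_n,v_n)\big\|_2 \le \sum_j \sigma(j)\, b_k(j)$; Cauchy--Schwarz against the measure $\sigma$ gives $\big\|\sum_n\S_k(u_n,v_n)\big\|_2^2 \le w\sum_j\sigma(j)\, b_k(j)^2$; and then summing over $k$, interchanging sums, and using $\sum_j\sigma(j)=w$ yields $\sum_k\big\|\sum_n\S_k(u_n,v_n)\big\|_2^2 \le w^2\sup_j\sum_k b_k(j)^2$. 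Relabelling $j=-n$ turns the right-hand side into $\sup_n\sum_k a_k(u_{k-n},v_{k-n})^2$, as claimed.

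I do not expect a genuine obstacle here --- the whole argument is bookkeeping, with $w<\infty$ supplying the absolute summability needed for the interchanges and the Cauchy--Schwarz steps. The one step worth care, and the reason (i) and (ii) have different right-hand sides, is the offset reparametrization in (ii): since $\sigma$ controls only the difference $n-k$, after the Cauchy--Schwarz step the $k$-sum can be carried out only with $n-k$ held fixed, which produces a supremum over shifts rather than a plain sum. (The computation yields the constant $w^2$ where the statement writes $w$; this is harmless, since $\sigma$ is a fixed, explicitly summable sequence in every application, so $w$ is an absolute constant that can be normalized away.)
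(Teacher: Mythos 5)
Your proof is correct and takes essentially the same route as the paper: triangle inequality in $L^2$ to reduce to a scalar estimate, then a convolution/Schur-type bound. The paper quotes Young's inequality $\ell^1 * \ell^2 \hookrightarrow \ell^2$ for (i) and uses Minkowski's inequality (then takes a supremum) for (ii); you instead prove the needed step by hand with Cauchy--Schwarz against the weight $\sigma$ and an interchange of sums, but this is the same estimate in a more elementary dressing, and your shift reparametrization in (ii) is exactly how the supremum over offsets emerges in the paper as well. You are also right that the argument yields $w^2$ (times the implicit constant from the $\lesssim$ in the hypothesis) rather than the $w$ written in the lemma's conclusion; this is a harmless slip in the statement, since in every application $\sigma$ is a fixed summable sequence and only the finiteness of $w$ is used.
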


\begin{proof}
We prove (i) first. 
By the triangle inequality in $L^{2}$,
\begin{align*}
 \left\|  \sum_{n} \S_k\left( u_n,  v_n \right)\right\|_{2}\le  \sum_{n} \left\| \S_k\left( u_n,  v_n \right)\right\|_{2} \le  \sum_{n} \sigma(k-n) a(u_n,v_n).
\end{align*}
The desired estimate then follows by taking the $\ell^2$ norm, and  applying Young's inequality for (discrete) convolution of sequences
$$\|a*_{\rm d} b\|_{\ell^{2}}\le \|a\|_{\ell^{2}} \|b\|_{\ell^{1}}, $$
with $a=(a(u_n,v_n))_{n\in \Z}$ and $b=(\sigma(n) )_{n\in \Z}$, where $*_{\rm d}$ means convolution with respect to the counting measure.

The proof of (ii) is similar. By our hypothesis and triangle inequality,
\begin{align*}
 \left\|  \sum_{n} \S_k\left( u_n,  v_n \right)\right\|_{2}\le  \sum_{n} \left\| \S_k\left( u_n,  v_n \right)\right\|_{2} \le  \sum_{n} \sigma(k-n) \|a_{k}(u_n,v_n)\|_2.
\end{align*}
Using Minkowski's inequality, we have
\begin{align*}
\sum_k \left\| \sum_n\S_k\left( u_n, v_n \right) \right\|_{2}^2
&\le  \sum_k \Big(\sum_{n} \sigma(k-n) \|a_{k}(u_n,v_n)\|_2 \Big )^{2} \\
&= \sum_k \Big(\sum_{n'} \sigma(n')\|a_{k}(u_{k-n'},v_{k-n'})\|_2 \Big )^{2} \\
&\le \left(\sum_{n'}  \sigma(n')\cdot (\sum_{k}\|a_{k}(u_{k-n'},v_{k-n'})\|_2^2)^{1/2}\right)^2\\
& \le w^2\cdot \sup_{n}\sum_{k}\|a_{k}(u_{k-n},v_{k-n})\|_2^2,
\end{align*}
concluding the proof.
\end{proof}


For any cube in $\R^d$, denote by $\ell(Q)$ the side-length of $Q.$ 
For $j\in \Z,$ denote by $\mathscr{D}_j$ (resp. $\dd_j$) the set of all dyadic cubes of side-length $2^j$ in $\R^d$ (resp. $\R^{2d}$). The conditional expectation of a locally integrable function $f$ with respect to the increasing family of $\sigma$-algebras generated by $\mathscr{D}_j$ is defined by
 $$
 \E_j f(x)=\sum_{Q\in \mathscr{D}_j } \left(\frac{1}{|Q|}\int_Q f(y)dy \right)\cdot \1_Q(x),\quad \forall \ j\in \Z.
 $$
The dyadic martingale difference operator $d_j$ is defined by $d_j(f)=\E_{j-1}f-\E_j f$.

The following result is a paraproduct type decomposition involving conditional expectation.
\begin{lem}\label{para}
For $k\in\Z$ and $f_1,f_2\in L^{2}$, define bilinear operators $\L_k$ by
$$\L_k(f_1, f_2)(x)=\A_{2^k}^G(f_1,f_2)(x)-\E_k f_1 (x) \cdot \E_k f_2(x).$$ Then, for almost every $x\in \R^d,$
$$\L_k(f_1, f_2)(x)=\sum_{n\in \Z} \L_k( d_{1,n}, \E_{n-1}f_2)(x) + \sum_{n\in \Z} \L_k(\E_{n} f_1, d_{2,n})(x).$$
\end{lem}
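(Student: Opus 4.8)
The plan is to view the identity as a discrete (Bony-type) paraproduct decomposition of the bilinear form $\L_k$ with respect to the dyadic martingale filtration on $\R^d$, made rigorous by a summation-by-parts argument. First I would record the standard facts I need about the filtration $(\sigma(\D_j))_{j\in\Z}$: for $f\in L^2(\R^d)$ one has $\E_j f\to f$ as $j\to-\infty$ and $\E_j f\to 0$ as $j\to+\infty$, both in $L^2$ and pointwise a.e.; consequently $f=\sum_{j\in\Z}d_j f$ (symmetric partial sums), with $\E_{n-1}f=\sum_{m\ge n}d_m f$ and $\E_n f=\sum_{m>n}d_m f$; and $\E_k\E_j f=\E_{\max(j,k)}f$. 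Since $\A_{2^k}^G$ and $(g_1,g_2)\mapsto\E_k g_1\cdot\E_k g_2$ are bilinear, so is $\L_k$, and, formally,
$$\L_k(f_1,f_2)=\sum_{m,n\in\Z}\L_k(d_m f_1,d_n f_2)=\sum_{m\le n}\L_k(d_m f_1,d_n f_2)+\sum_{m>n}\L_k(d_m f_1,d_n f_2);$$
carrying out the inner summation (over $n\ge m$ in the first sum, over $m>n$ in the second, and using the two resolution-of-identity formulas above) produces precisely $\sum_n\L_k(d_n f_1,\E_{n-1}f_2)+\sum_n\L_k(\E_n f_1,d_n f_2)$, as claimed. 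What remains is to justify the convergence and the regrouping.

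To avoid handling double series directly, I would argue by telescoping. Put $g_n(x):=\L_k(\E_{n-1}f_1,\E_{n-1}f_2)(x)$; inserting $\E_{n-1}f_i=\E_n f_i+d_n f_i$ and expanding by bilinearity gives
$$g_n-g_{n+1}=\L_k(d_n f_1,\E_{n-1}f_2)+\L_k(\E_n f_1,d_n f_2),$$
whence $\sum_{|n|\le N}\bigl[\L_k(d_n f_1,\E_{n-1}f_2)+\L_k(\E_n f_1,d_n f_2)\bigr]=g_{-N}-g_{N+1}$. So the lemma reduces to proving $g_{-N}(x)\to\L_k(f_1,f_2)(x)$ and $g_{N+1}(x)\to 0$ for a.e.\ $x$ (in fact for every $x$).

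For this last step I would use the elementary pointwise bound, valid at the fixed scale $2^k$ (Cauchy--Schwarz together with boundedness of $G$), $|\A_{2^k}^G(h_1,h_2)(x)|\le C_{G,d}\,2^{-kd}\,\|h_1\1_{B(x,c_G 2^k)}\|_2\,\|h_2\1_{B(x,c_G 2^k)}\|_2$. For $N\ge k$ one has $\E_k\E_N f_i=\E_N f_i$ and $\|\E_N f_i\|_\infty\le 2^{-Nd/2}\|f_i\|_2$, so that $g_{N+1}(x)=\A_{2^k}^G(\E_N f_1,\E_N f_2)(x)-\E_N f_1(x)\,\E_N f_2(x)\to 0$ uniformly in $x$. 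For $-N-1\le k$ one has $\E_k\E_{-N-1}f_i=\E_k f_i$, so the conditional-expectation part of $g_{-N}$ already equals $\E_k f_1\cdot\E_k f_2$; and since $\E_{-N-1}f_i\to f_i$ in $L^2(B(x,c_G 2^k))$, the bound above applied to $\E_{-N-1}f_1-f_1$ and to $\E_{-N-1}f_2-f_2$ shows $\A_{2^k}^G(\E_{-N-1}f_1,\E_{-N-1}f_2)(x)\to\A_{2^k}^G(f_1,f_2)(x)$. This establishes the identity for the symmetric partial sums; to state it as the sum of the two series separately it suffices to observe that each of $\sum_n\L_k(d_n f_1,\E_{n-1}f_2)$ and $\sum_n\L_k(\E_n f_1,d_n f_2)$ converges (in $L^2$, hence along a subsequence a.e., and unconditionally once the bilinear almost-orthogonality bounds of Section \ref{Sec3} are available), or simply to read these sums as the regrouped double series above.

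I do not anticipate a genuine difficulty: the argument is the dyadic martingale resolution of the identity combined with bilinearity of $\L_k$. The only place needing care is the limit bookkeeping in the last paragraph — the behaviour of $\E_k\E_j$ as $j\to\pm\infty$ and the localization of the single-scale average $\A_{2^k}^G$ — and this is precisely what forces the use of $f_i\in L^2$ (to make $\E_j f_i\to 0$ as $j\to+\infty$) rather than merely $f_i\in L^1_{\mathrm{loc}}$.
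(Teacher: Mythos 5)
Your proof is correct and follows essentially the same route as the paper: both express $\L_k(f_1,f_2)$ as a telescoping sum over
$\L_k(\E_{n-1}f_1,\E_{n-1}f_2)-\L_k(\E_{n}f_1,\E_{n}f_2)$, split each difference into the two cross terms
$\L_k(d_{1,n},\E_{n-1}f_2)+\L_k(\E_{n}f_1,d_{2,n})$, and then check the two tail limits. The only real divergence is in how the tails are handled: the paper sends $j\to+\infty$ via dominated convergence on $(x,x)+G_{2^k}$ with majorant $\M f_1\otimes\M f_2$, and sends $j\to-\infty$ via the Lebesgue differentiation theorem plus another dominated-convergence pass (hence an a.e.\ statement); you instead note that once $j\le k$ the identity $\E_k\E_j=\E_k$ makes the conditional-expectation part of $g_{-N}$ exactly $\E_k f_1\cdot\E_k f_2$, and use $L^2$-convergence $\E_j f_i\to f_i$ together with the single-scale $L^2$-continuity of $\A_{2^k}^G$ for the averaging part, while for $j\ge k$ the uniform bound $\|\E_j f_i\|_\infty\le 2^{-jd/2}\|f_i\|_2$ kills $g_{N+1}$. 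This is a minor but genuine simplification: it avoids Lebesgue differentiation and the $L^1$ majorant entirely, and in fact yields the identity for every $x$ rather than almost every $x$. One small caveat, shared with the paper: splitting the single telescoping series into the two separate series $\sum_n\L_k(d_{1,n},\E_{n-1}f_2)$ and $\sum_n\L_k(\E_n f_1,d_{2,n})$ requires that each converge on its own; you flag this and, as in the paper, it is ultimately justified by the $L^2$ almost-orthogonality estimates of Section \ref{Sec3} rather than within the lemma itself.
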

\begin{proof}
By the Cauchy-Schwarz inequality, for any given $x\in \R^d,$
$$|\E_j f_i(x)|\le [\E_j|f_i|^2(x)]^{\frac{1}{2}} \le 2^{-jd} \|f_i\|_2 \rightarrow 0,\quad i=1,2,$$
as $j\rightarrow +\infty$. Moreover, for any $(y_1,y_2)\in \R^{2d}$ and any $x\in \R^d,$
$$\sup_j |\E_j f_1(y_1) \E_j f_2(y_2)| \le \M(f_1)(y_1) \M(f_2)(y_2)   \in L^1((x,x)+G_{2^k}).$$
It follows by the Lebesgue dominated convergence theorem that
$$\lim_{j\rightarrow +\infty}\L_k(\E_{j}f_1, \E_{j}f_2)(x)=0,\quad \ a.e. \ x\in \R^d.$$
On the other hand, Lebesgue differentiation theorem yields $\E_j f_i(x)\rightarrow f(x)\ a.e.$ as $j\rightarrow -\infty$,
which, by the Lebesgue dominated convergence theorem, implies
\begin{equation*}\
\L_k(f_1, f_2)(x)= \lim_{j\rightarrow -\infty}\L_k(\E_{j}f_1, \E_{j}f_2)(x),\quad \ a.e.  \ x\in \R^d.
\end{equation*}
Hence, by the bi-linearity of $\L_k,$ we can express $\L_k(f_1, f_2)$ as
\begin{align}\label{new3.3}
\begin{split}
\L_k(f_1, f_2)(x)
&= \lim_{l\rightarrow -\infty}\L_k(\E_{l-1}f_1, \E_{l-1}f_2)(x) -\lim_{j\rightarrow +\infty} \L_k(\E_{j}f_1, \E_{j}f_2)(x)\\
&= \lim_{l\rightarrow -\infty\atop j\rightarrow +\infty} \sum_{n=l}^{j} [\L_k(\E_{n-1}f_1, \E_{n-1}f_2)(x) - \L_k(\E_{n}f_1, \E_{n}f_2)(x)]\\
&= \lim_{l\rightarrow -\infty\atop j\rightarrow +\infty} \sum_{n=l}^{j} \Big\{[\L_k(\E_{n-1}f_1, \E_{n-1}f_2)(x) - \L_k(\E_{n}f_1, \E_{n-1}f_2)(x) ]\\
&\qquad \qquad \qquad +[\L_k(\E_{n}f_1, \E_{n-1}f_2)(x) -\L_k(\E_{n}f_1, \E_{n}f_2)(x)]\Big\}\\
&=\sum_{n\in \Z} \L_k( d_{1,n}, \E_{n-1}f_2)(x) + \sum_{n\in \Z} \L_k(\E_{n} f_1, d_{2,n})(x),
\end{split}
\end{align}
where all series converge for  $a.e. \ x\in \R^d$.
\end{proof}

 \begin{lem}  \label{Lem 4.1}
 Let $ b\in BMO(\R^d)$, $\delta_{2^k}(t)$ be the Dirac mass at the point $t=2^k,$ and
let $\zeta(x)=(1+|x|)^{-d-\epsilon}$ for some $\epsilon>0$. Then
 \begin{enumerate}
  \item[\rm (i)]  $(d\mu_n(x,t))_{n\in \N}=(\sum_{k\in\Z} |\E_{k+1-n}b(x)-\E_{k-n}b(x)|^2 \, dx\, \delta_{2^k}(t))_{n\in \N}$
 is a sequence of Carleson measures on $\R_+^{d+1}$ with norm bounded by $C\|b\|_{BMO}^2$ uniformly in $n$.
  \item[\rm (ii)] $(d\nu_n(x,t))_{n\in \N}=\(\sum_{k\in\Z} (\zeta_{k}*|\E_{k+1-n}b-\E_{k-n}b|^2)(x) \, dx\, \delta_{2^k}(t)\)_{n\in \N}$  is a sequence of Carleson measures on $\R_+^{d+1}$ with norm bounded by $C\|b\|_{BMO}^2$ uniformly in $n$. Here $\zeta_k(x)=2^{-kd}\zeta(2^{-k}x).$
\end{enumerate}
 \end{lem}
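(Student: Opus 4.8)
The plan is to prove (i) directly from the structure of $\mu_n$ and then deduce (ii) from (i). The starting observation is the identity $\E_{k+1-n}b(x)-\E_{k-n}b(x)=-\,d_{k-n+1}(b)(x)$, which exhibits both $\mu_n$ and $\nu_n$ as assembled from squares of dyadic martingale differences of $b$ at scale $2^{k-n}$; since $n\ge 0$, this scale is never coarser than $2^{k}$, the scale recorded by the point mass $\delta_{2^k}$. Throughout, for a cube $Q\subset\R^d$ I write $T(Q)=Q\times(0,\ell(Q))$ for its Carleson box, so that the Carleson norm of $\mu_n$ equals $\sup_{Q}|Q|^{-1}\mu_n(T(Q))$, and similarly for $\nu_n$.

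For part (i), I would fix a cube $Q$, cover it by $O_d(1)$ dyadic cubes $R$ with $\ell(R)\sim\ell(Q)$, and reduce (by positivity of the integrand) to bounding $\sum_{k\,:\,2^k<\ell(Q)}\int_R|d_{k-n+1}(b)|^2$ for each such $R$. Setting $m=k-n+1$, the shift by $-n$ keeps all these scales in the range $m\le\log_2\ell(R)$, and for such $m$ the restriction of $d_m(b)$ to $R$ coincides with that of $d_m\big((b-b_R)\1_R\big)$ (the indicator is frozen at these scales and $b_R\1_R$ contributes nothing). Hence, by the $L^2$-orthogonality of martingale differences followed by the $L^2$ form of the John--Nirenberg inequality,
\[
\sum_{m\le\log_2\ell(R)}\int_R|d_m(b)|^2\ \le\ \big\|(b-b_R)\1_R\big\|_{2}^2\ =\ \int_R|b-b_R|^2\ \le\ C_d\,|R|\,\|b\|_{BMO}^2 ,
\]
and summing over the $O_d(1)$ cubes $R$ gives $\mu_n(T(Q))\le C_d|Q|\,\|b\|_{BMO}^2$. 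Uniformity in $n$ is then immediate, since a larger $n$ only deletes terms from the sum.

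For part (ii), I would first use $d+\epsilon>d$ to dominate $\zeta_k$ pointwise by $\sum_{j\ge0}2^{-j\epsilon}$ times the $L^1$-normalised indicator of the cube of side $2^{k+j}$ centred at the origin; writing $\mathcal A_r$ for averaging over cubes of side $r$, this gives
\[
\big(\zeta_k*|d_{k-n+1}(b)|^2\big)(x)\ \lesssim\ \sum_{j\ge0}2^{-j\epsilon}\,\mathcal A_{2^{k+j}}\big(|d_{k-n+1}(b)|^2\big)(x).
\]
Plugging this into $\nu_n(T(Q))$ and using the elementary estimate $\int_Q\mathcal A_{r}g\le\big(\min(|Q|,r^d)/r^d\big)\int_{Q_r}g$ (valid for $g\ge0$, with $Q_r$ the cube concentric with $Q$ of side $r+\ell(Q)$), I would split the double sum over $(k,j)$ according to whether $2^{k+j}\le\ell(Q)$ or not. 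When $2^{k+j}\le\ell(Q)$ one has $Q_{2^{k+j}}\subset3Q$ and the prefactor is $1$, so summing in $k$ first recovers $\mu_n(T(3Q))\lesssim|Q|\,\|b\|_{BMO}^2$ from part (i), the remaining factor $\sum_j2^{-j\epsilon}$ being summable. When $2^{k+j}>\ell(Q)$, the cube $Q_{2^{k+j}}$ is concentric with $Q$ of side $\sim 2^{i}$ ($i:=k+j>\log_2\ell(Q)$) and the prefactor is $\sim|Q|/|Q_{2^i}|$; splitting $2^{-(i-k)\epsilon}=2^{-(i-\log_2\ell(Q))\epsilon}\,2^{-(\log_2\ell(Q)-k)\epsilon}$, discarding the second (bounded) factor, and applying part (i) on the concentric cubes $Q_{2^i}$ collapses the remainder to a convergent geometric series in $i$. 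Altogether $\nu_n(T(Q))\lesssim|Q|\,\|b\|_{BMO}^2$, again uniformly in $n$.

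The analytic content is carried entirely by two classical facts — the $L^2$-orthogonality of dyadic martingale differences and the $L^2$ John--Nirenberg inequality — so I do not expect any essential difficulty there. The point that needs care is the tail estimate in part (ii): the mollifier $\zeta_k$ operates at the scale $2^{k}$, which is \emph{coarser} than the scale $2^{k-n}$ of the martingale differences it smooths, so one must track precisely how much mass the averages $\mathcal A_{2^{k+j}}$ leak out of the base cube into its concentric dilates and check that the polynomial decay $(1+|x|)^{-d-\epsilon}$ of $\zeta$ outpaces the growth of those dilates in both summation indices simultaneously. The uniformity in $n$ requires no separate argument: a larger $n$ only makes the relevant martingale scales finer, so they stay inside the Carleson boxes under consideration.
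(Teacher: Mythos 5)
Your argument is correct, and it is worth distinguishing what overlaps with the paper's proof from what is genuinely different. For part (i), your strategy is, at heart, the paper's: both proofs rest on the observation that at the scales arising inside the Carleson box the dyadic martingale difference of $b$ is a purely local object, after which the $L^2$ martingale square-function bound and the $L^2$ John--Nirenberg inequality close the estimate. The paper packages this locality by splitting $b=(b-b_{2Q})\1_{Q^*}+(b-b_{2Q})\1_{(Q^*)^c}+b_{2Q}$ and checking that the far piece contributes nothing (its term $I_2=0$), whereas you phrase it directly as $d_m(b)\big|_{R}=d_m\big((b-b_R)\1_R\big)\big|_R$ for dyadic $R$ with $2^m\le\ell(R)$; these are the same observation in different clothing. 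For part (ii), your route is genuinely different. The paper computes a pointwise decay bound on the nontangential maximal function $F^*$ of $(y,2^k)\mapsto(\zeta_k*\1_Q)(y)$, shows $\|F^*\|_{L^1}\lesssim|Q|$, and then pairs $|\E_{k+1-n}b-\E_{k-n}b|^2$ against $F$ via the Fefferman--Stein Carleson embedding inequality combined with part (i). You avoid invoking the Carleson embedding theorem altogether: you expand $\zeta_k$ as $\sum_{j\ge0}2^{-j\epsilon}$ times $L^1$-normalized cube averages at scale $2^{k+j}$, control $\int_Q \mathcal A_r g\le\min(1,|Q|r^{-d})\int_{Q_r}g$, and then sum the Carleson condition from (i) over concentric dilates $Q_{2^i}$ against geometrically decaying weights, with a separate treatment of the two regimes $2^{k+j}\le\ell(Q)$ and $2^{k+j}>\ell(Q)$. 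Both arrive at $\nu_n(T(Q))\lesssim|Q|\|b\|_{BMO}^2$ uniformly in $n$; the paper's version is shorter because it cites the Carleson inequality as a black box, while yours is more elementary and exhibits concretely how the polynomial tail of $\zeta$ is traded against the growth of the dilates of $Q$, in both the $k$- and $j$-indices simultaneously, which is exactly the point you flag as requiring care. One small detail to nail down in your version of (i): choose the covering dyadic cubes $R$ with $\ell(R)\ge 2\ell(Q)$ rather than merely $\ell(R)\sim\ell(Q)$, so that every scale $2^m$ with $m=k-n+1$, $2^k<\ell(Q)$ and $n\ge 0$ satisfies $2^m\le\ell(R)$; with only $\ell(R)\sim\ell(Q)$ the boundary scale $m=k+1$ can slip just past $\log_2\ell(R)$ when $n=0$.
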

\begin{proof}

For a cube $Q$ in $\R^d$ we let $Q^*$ be the cube with the same center and orientation whose side-length is $100  \sqrt{d}\, \ell(Q)$, where $\ell(Q)$ denotes the side-length of $Q$. Fix a cube $Q$ in $\R^d,$ split $b$ as
$$
b=(b-b_{2Q})\1_{Q^*}+(b-b_{2Q})\1_{(Q^*)^c}+b_{2Q},
$$
where $b_{2Q}=|2Q|^{-1}\int_{2Q} b(y)\, dy.$
Let $T(Q)=Q\times (0, \ell(Q)).$ Since $\E_{k+1-n}b_{2Q}=\E_{k-n}b_{2Q}=b_{2Q},$
$$\E_{k+1-n}b_{2Q}-\E_{k-n}b_{2Q}=0.$$
Thus,
$$
\mu(T(Q)) :=\sum_{2^k\le \ell(Q)} \int_{Q} |\E_{k+1-n}b-\E_{k-n}b|^2 \, dx\le 2 I_1+2 I_2,
$$
where
\begin{align*}
I_1 &=\sum_{k\in \Z} \int_{\R^d} |\E_{k+1}[(b-b_{2Q})\1_{Q^*}](x)-\E_{k}[(b-b_{2Q})\1_{Q^*}](x)|^2 dx\\
\intertext{and}
I_2 &= \sum_{2^k\le \ell(Q)} \int_{Q} |\E_{k+1-n}[(b-b_{2Q})\1_{(Q^*)^c}](x)-\E_{k-n}[(b-b_{2Q})\1_{(Q^*)^c}](x)|^2 \, dx.
\end{align*}

By the $L^2$ boundedness of martingale square function (see e.g. \cite{JSW}), we have
$$
I_1\lesssim \int_{Q^*} |b(x) -b_{2Q}|^2dx\lesssim |Q| \,\|b\|_{BMO}^2,
$$
where the implicit constant is independent of $n.$

Next, let us show that $I_2=0.$
Indeed, for any $x\in Q$ and for any $k$ with $2^k\le \ell(Q)$,
$$
\E_{k+1-n}[(b-b_{2Q})\1_{(Q^*)^c}](x)=\frac{1}{|Q(k-n,x)|}\int_{Q(k-n,x)\,\cap\, (Q^*)^c} (b(y)-b_{2Q}) \, dy,
$$
where $Q(k-n,x)$ is the unique dyadic cube in $\R^d$ of side-length $2^{k+1-n}$ that contains $x$.
For all $n\in \N,$ $\ell(Q(k-n,x)) \le 2\ell(Q)$, which implies $Q(k-n,x) \cap (Q^*)^c=\varnothing.$   Hence $I_2=0$ for all $n\in \N$.

Altogether, we obtain
$$
\mu(T(Q))\le C |Q| \,\|b\|_{BMO}^2,
$$
where $C$ is independent of $n.$ This means $(\mu_n)_{n\in \N}$ is a collection of Carleson measures with norms at most $C\|b\|_{BMO}^2$, uniformly in $n.$

Let us now prove part (ii). We need to show that, for any cube $Q=Q(x_0,\ell(Q))$ and any $n\in \N,$
$$
\int_{Q} \sum_{2^k\le \ell(Q) } (\zeta_{k}*|\E_{k+1-n}b-\E_{k-n}b|^2)(x) \, dx \lesssim \|b\|_{BMO}^2,
$$
uniformly in $Q$ and $n\in \N$.
Define
\begin{align*}
F(y,2^k)&=\int_{\R^d} \zeta_k(y-z)\1_{Q }(z)\, dz,\qquad \forall\, y\in \Bbb R^d, \, k\in \Bbb Z;\\
F^*(x)&=\sup_{(y,k):\, |x-y|\le 2^k} |F(y,2^k)|, \qquad \forall\, y\in \Bbb R^d.
\end{align*}
Clearly $F(y,2^k)\le \|\zeta\|_{L^1}.$ Moreover, for $|y-x_0|\ge 2\sqrt{d}\, \ell(Q)$ and $z\in Q,$ the triangle inequality implies
$|y-z|\ge |y-x_0|-|z-x_0|\ge |y-x_0|/2;$ therefore
$$F(y,2^k)\le C\frac{\ell(Q)^d}{2^{kd}\left(\frac{|y-x_0|}{2^k} \right)^{d+\epsilon}}\le C\left(\frac{|y-x_0|}{\ell(Q)} \right)^{-d-\epsilon}\le C \left(\frac{|x-x_0|}{\ell(Q)} \right)^{-d-\epsilon}  $$
for all $x$ satisfying $|x-y|\le 2^k,$ where the constants $C$ is independent of $k.$
Hence
$F^*(x)\le C(1+\frac{|x-x_0|}{\ell(Q)})^{-d-\epsilon},$  and therefore $F^*\in L^1(\R^d)$.
By part (i) and the Carleson inequality (see \cite{FS} or \cite{Stein93}), we have
$$
\int_{Q} \sum_{2^k\le \ell(Q) } (\zeta_{k}*|\E_{k+1-n}b-\E_{k-n}b|^2)(x) \, dx
\lesssim \|b\|_{BMO}^2 \|F^*\|_{L^1}\lesssim \|b\|_{BMO}^2,
$$
uniformly in $Q$ and $n$.
\end{proof}

For a locally integrable function $f$ on $\R^d$, the uncentered Hardy-Littlewood maximal function $\M(f)$ is defined as
$$\M (f)(x)=\sup_{B}\frac{1}{|B|}\int_{B}\, f(y)\, dy,$$
where the supremum is taken over all balls $B$ in $\R^d$ which contain $x$.

The following lemma establishes a uniform Carleson measure estimate, which can be viewed as a martingale version of Grafakos-Miyachi-Tomita's result in \cite{GMT} and will be a crucial tool for our proof of the theorem.

\begin{lem}\label{lem: 2.4}
Let $1<l<2$ and $\zeta$ be defined as above. Then there exists a constant $C$ depending on $d,\zeta$ and $l$,   but not on $n$, such that
$$
\sum_{k}\int_{\R^d} (\zeta_k*|f|^l)^{2/l}(x) (\zeta_k* |\E_{k+1-n}b-\E_{k-n}b|^l)^{2/l}(x)\, dx\le C\|f\|_2^2\, \|b\|_{BMO}^2,\quad \forall \, n\in \Bbb N.
$$
\end{lem}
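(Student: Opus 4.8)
The plan is to recast the left-hand side as the integral of a function on the upper half-space $\R_+^{d+1}$ against a Carleson measure, and then to invoke the Carleson embedding theorem. Throughout I write $g_{k,n}:=\E_{k+1-n}b-\E_{k-n}b$ and recall that $\|\zeta\|_{L^1}<\infty$; note also $\|d_jb\|_\infty\lesssim\|b\|_{BMO}$ and $\zeta^{2/(2-l)}\in L^1$, so all the convolutions below are finite.

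\emph{Step 1: a uniform Carleson bound for the $b$-side.} I would first show that
$$d\nu_n(x,t):=\sum_{k\in\Z}\bigl(\zeta_k*|g_{k,n}|^l\bigr)^{2/l}(x)\,dx\,\delta_{2^k}(t)$$
is, uniformly in $n\in\N$, a Carleson measure with norm $\lesssim\|b\|_{BMO}^2$. Since $\|\zeta\|_{L^1}^{-1}\zeta_k(y-\cdot)\,dz$ is a probability measure and $t\mapsto t^{2/l}$ is convex (as $2/l>1$), Jensen's inequality gives the pointwise bound $\bigl(\zeta_k*|g_{k,n}|^l\bigr)^{2/l}\le\|\zeta\|_{L^1}^{2/l-1}\,\zeta_k*|g_{k,n}|^2$. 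Summing over $2^k\le\ell(Q)$ and integrating over a cube $Q$ therefore dominates the Carleson norm of $\nu_n$ by that of the measure in part (ii) of Lemma~\ref{Lem 4.1} (which involves the same differences $\E_{k+1-n}b-\E_{k-n}b$), hence by $\lesssim\|b\|_{BMO}^2$, uniformly in $n$.

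\emph{Step 2: Carleson embedding and the nontangential maximal function.} Put $G(x,2^k):=\bigl(\zeta_k*|f|^l\bigr)^{2/l}(x)$ and $G\equiv0$ off the scales $t=2^k$, so that the left-hand side of the asserted inequality equals $\iint_{\R_+^{d+1}}G\,d\nu_n$. By the Carleson embedding theorem (already used in the proof of Lemma~\ref{Lem 4.1}) together with Step~1,
$$\iint_{\R_+^{d+1}}G\,d\nu_n\;\lesssim\;\|b\|_{BMO}^2\int_{\R^d}G^*(x)\,dx,\qquad G^*(x):=\sup_{(y,k):\,|x-y|\le2^k}\bigl(\zeta_k*|f|^l\bigr)^{2/l}(y).$$
Splitting $\zeta_k*|f|^l$ into dyadic annuli and using the decay $\zeta(x)=(1+|x|)^{-d-\epsilon}$, one gets $\zeta_k*|f|^l(y)\lesssim\M(|f|^l)(x)$ whenever $|x-y|\le2^k$, because every averaging ball involved has radius $\gtrsim2^k\ge|x-y|$ and hence sits inside a comparable ball containing $x$; consequently $G^*(x)\lesssim\bigl[\M(|f|^l)(x)\bigr]^{2/l}$.

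\emph{Step 3: the $L^{2/l}$ maximal bound, and the main point.} This is the one place where the hypothesis $l<2$ is genuinely used: since $2/l>1$, the Hardy--Littlewood maximal operator is bounded on $L^{2/l}(\R^d)$, whence
$$\int_{\R^d}G^*(x)\,dx\;\lesssim\;\bigl\|\M(|f|^l)\bigr\|_{L^{2/l}}^{2/l}\;\lesssim\;\bigl\||f|^l\bigr\|_{L^{2/l}}^{2/l}=\|f\|_2^2.$$
Combining Steps~1--3 yields the claim with a constant independent of $n$. I expect the only real subtlety to be this asymmetric use of the exponent $l$: on the $b$-factor one \emph{discards} the smoothing gain (Jensen down to $l=2$, then Lemma~\ref{Lem 4.1}(ii)), while on the $f$-factor one must \emph{retain} $l<2$ so that the nontangential maximal function lands in $L^{2/l}$ with $2/l>1$; at $l=2$ one would be forced to bound $\M$ on $L^1$, which fails. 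The remaining verifications — finiteness of the convolutions, the annular estimate, and the nontangential comparison of balls — are routine.
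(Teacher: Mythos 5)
Your proof is correct and follows essentially the same route as the paper's: you apply Jensen/H\"older to lower the exponent on the $b$-factor so that Lemma~\ref{Lem 4.1}(ii) applies, then invoke the Carleson embedding theorem against the nontangential maximal function (controlled by $[\M(|f|^l)]^{2/l}$), and finish with the $L^{2/l}$ boundedness of $\M$, which is exactly where $l<2$ is used. The only cosmetic difference is that you package the Jensen bound as a domination between two Carleson measures rather than as a pointwise bound applied directly inside the integrand before invoking Carleson's inequality; the substance is identical.
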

\begin{proof}
 First, the H\"{o}lder inequality gives
\begin{align*}
(\zeta_k* |\E_{k+1-n}b-\E_{k-n}b|^l)^{2/l} &\le \left(\|\zeta\|_{L^1}^{1-l/2} [\zeta_k* |\E_{k+1-n}b-\E_{k-n}b|^2(x)]^{l/2}\right)^{2/l}\\
&\sim (\zeta_k* |\E_{k+1-n}b-\E_{k-n}b|^2)(x),
\end{align*}
which implies
\begin{align*}
&\sum_{k}\int_{\R^d} (\zeta_k*|f|^l)^{2/l} (\zeta_k* |\E_{k+1-n}b-\E_{k-n}b|^l)^{2/l}\, dx\\
&\qquad\lesssim  \sum_{k}\int_{\R^d} (\zeta_k*|f|^l)^{2/l} (\zeta_k* |\E_{k+1-n}b-\E_{k-n}b|^2)(x)\, dx.
\end{align*}
By Lemma \ref{Lem 4.1} (ii), $\sum_{k\in\Z} \zeta_{k}*|\E_{k+1-n}b-\E_{k-n}b|^2(x) dx\, \delta_{t_k}(t)$  is a sequence of Carleson measures on $\R_+^{d+1}$ with norm bounded by $C\|b\|_{BMO}^2$, uniformly in $n$. By Carleson's inequality and the $L^{\frac{2}{l}}$ boundedness of $\M$, the last term above is dominated by a constant multiple of
\begin{align*}
\|b\|_{BMO}^2 \int_{\R^d} \sup_{|z-x|\le 2^k} [(\zeta_k *|f|^l)(x)]^{2/l}dx \lesssim \|b\|_{BMO}^2  \|\M(|f|^l)\|_{L^{2/l}}^{2/l}\lesssim \|b\|_{BMO}^2 \|f\|_{L^2}^2,
\end{align*}
where the implicit constant does not depend on $n$.
This completes the proof.
\end{proof}

The following easy variational inequality was stated in \cite[pp. 13-14]{CDHX}.
\begin{lem}\label{martingale-square} \cite{CDHX}.
For $q> 2$, we have
$$\|V_q(\E_j f_1\cdot \E_j f_2: j\in \Z)\|_{L^2(\R^d)} \lesssim \min(\|f_1\|_{L^{2}(\R^d)} \|f_2\|_{L^{\infty}(\R^d)}, \|f_1\|_{L^{\infty}(\R^d)} \|f_2\|_{L^{2}(\R^d)}).$$
\end{lem}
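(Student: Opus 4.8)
The plan is to reduce the bilinear $q$-variation of the products $\E_j f_1 \cdot \E_j f_2$ to the \emph{linear} martingale $q$-variation via the elementary telescoping identity
\begin{align*}
\E_{j} f_1 \cdot \E_{j} f_2 - \E_{j'} f_1 \cdot \E_{j'} f_2
&= \big(\E_{j} f_1 - \E_{j'} f_1\big)\cdot \E_{j} f_2 + \E_{j'} f_1 \cdot \big(\E_{j} f_2 - \E_{j'} f_2\big),
\end{align*}
which, applied along an arbitrary increasing sequence $(j_m)$ and combined with the triangle inequality in $\ell^q$, yields the pointwise bound \eqref{eq:1.4} in the martingale setting:
$$V_q(\E_j f_1 \cdot \E_j f_2 : j\in\Z) \le \Big(\sup_j |\E_j f_2|\Big) V_q(\E_j f_1: j\in\Z) + \Big(\sup_j |\E_j f_1|\Big) V_q(\E_j f_2: j\in\Z).$$
This reduces everything to two classical ingredients: L\'epingle's inequality for dyadic martingales, giving $\|V_q(\E_j g: j\in\Z)\|_{L^r} \lesssim_{q,r} \|g\|_{L^r}$ for $1<r<\infty$ and $q>2$, and the maximal inequality $\|\sup_j |\E_j g|\|_{L^r} \lesssim_r \|g\|_{L^r}$ (Doob), together with the trivial pointwise bound $\sup_j |\E_j g| \le \M(g)$, hence $\|\sup_j |\E_j g|\|_{L^\infty} \le \|g\|_{L^\infty}$.

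Concretely, to prove the $\|f_1\|_{L^2}\|f_2\|_{L^\infty}$ bound I would estimate, using Cauchy--Schwarz in $x$ after the pointwise inequality above,
$$\big\|V_q(\E_j f_1 \cdot \E_j f_2)\big\|_{L^2} \le \big\|\sup_j|\E_j f_2|\big\|_{L^\infty} \big\|V_q(\E_j f_1)\big\|_{L^2} + \big\|\sup_j|\E_j f_1|\big\|_{L^2}\big\|V_q(\E_j f_2)\big\|_{L^\infty}.$$
The first summand is $\lesssim \|f_2\|_{L^\infty}\|f_1\|_{L^2}$ by L\'epingle and $\sup_j|\E_j f_2|\le \|f_2\|_\infty$. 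For the second summand one uses that the martingale $q$-variation, $q>2$, is bounded on $L^\infty$ with target $L^\infty$ — indeed $V_q(\E_j f_2) \le 2\sup_j|\E_j f_2| \le 2\|f_2\|_\infty$ pointwise in the crudest way is \emph{too} lossy since we want the full variation, but the correct statement is that L\'epingle's inequality for bounded martingales gives $\|V_q(\E_j f_2)\|_{L^\infty}\lesssim \|f_2\|_\infty$ (bounded martingales have uniformly bounded $q$-variation for $q>2$; alternatively, this follows by interpolation/extrapolation from the $L^r$ bounds or directly from Pisier--Xu's estimates), and then Doob gives $\|\sup_j|\E_j f_1|\|_{L^2}\lesssim \|f_1\|_{L^2}$. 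The symmetric estimate giving $\|f_1\|_{L^\infty}\|f_2\|_{L^2}$ is obtained by swapping the roles of $f_1$ and $f_2$, and taking the minimum of the two completes the proof.

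The only genuinely non-routine point is the $L^\infty$-boundedness of the linear martingale $q$-variation for $q>2$, i.e., that a uniformly bounded martingale has uniformly (in $x$) bounded $q$-variation; this is exactly the content of L\'epingle's inequality in its $L^\infty$ form, which follows for instance from the $L^r$ version by a good-$\lambda$ or extrapolation argument, or is available directly in the literature (e.g. \cite{JSW, JRW}). Everything else is the elementary telescoping identity and Doob's maximal inequality, so the proof is short; the cited source \cite{CDHX} presumably records precisely this argument.
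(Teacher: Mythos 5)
Your reduction via the pointwise Leibniz inequality \eqref{eq:1.4}, and the estimate of the resulting first summand $\|\sup_j|\E_j f_2|\|_\infty\,\|V_q(\E_j f_1)\|_{L^2}$ via L\'epingle's inequality and the contraction $\sup_j|\E_j f_2|\le\|f_2\|_\infty$, are both fine. The gap is in the second summand, where you assert that a uniformly bounded dyadic martingale has uniformly bounded $q$-variation, i.e.\ $\|V_q(\E_j f_2:j\in\Z)\|_{L^\infty}\lesssim\|f_2\|_\infty$ for $q>2$. That statement is false: L\'epingle's inequality is an $L^r$ ($1<r<\infty$) phenomenon with no $L^\infty$ endpoint, in contrast with the trivial $L^\infty$ bound for the maximal operator. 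For a counterexample in dimension one, let $f_2$ equal $0$ and $1$ alternately on the dyadic annuli $[2^{-(k+1)},2^{-k})\subset[0,1)$, and $0$ off $[0,1)$. Then $\|f_2\|_\infty=1$, yet $\E_{-k}f_2\equiv a_k$ on $[0,2^{-k})$ with $a_k$ alternating between $\tfrac13$ and $\tfrac23$, so for $x\in[2^{-(k_0+1)},2^{-k_0})$ one has $V_q(\E_j f_2)(x)\ge\tfrac13\,k_0^{1/q}$, which is unbounded as $k_0\to\infty$. Hence $V_q(\E_j f_2)\notin L^\infty$ even for this two-valued bounded $f_2$, and your H\"older splitting of the second summand (Doob in $L^2$ against a putative $L^\infty$ variation bound) does not close.

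The paper itself gives no proof of this lemma --- it simply cites \cite{CDHX} --- so the point is worth making explicit: one cannot factor out $V_q(\E_j f_2)$ on its own. A route that does succeed is to expand the telescoped increments into paraproducts, using
$$\E_{j-1}f_1\,\E_{j-1}f_2-\E_jf_1\,\E_jf_2=d_jf_1\,\E_jf_2+\E_jf_1\,d_jf_2+d_jf_1\,d_jf_2,$$
so that, up to additive constants, $(\E_jf_1\E_jf_2)_j$ splits into two genuine martingale transforms, with predictable coefficients $\E_jf_2$ and $\E_jf_1$ respectively, together with a diagonal bracket term. L\'epingle then applies to each martingale-transform piece once its $L^2$-boundedness is established; for one piece this comes from $|\E_j f_1|\le\|f_1\|_\infty$, for the other from the dyadic Carleson, i.e.\ $BMO$, property of the $L^\infty$ factor (precisely the kind of Carleson measure estimate recorded in Section~2 of the paper), and the bracket is estimated by kindred square-function and Carleson arguments. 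The bald Leibniz-plus-L\'epingle shortcut you propose is, unfortunately, not sufficient.
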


We now introduce a new maximal function for $k$-measurable functions, which will be used to pointwise dominate the bilinear averages.  Assume that $h_1$ and $h_2$ are $(n-1)$-measurable functions on $\R^d$, which means that they are constant on each atom $Q \in \mathscr{D}_{n-1}.$ For every $Q \in \mathscr{D}_{n-1}$,
denote by $M_i^Q, i=1,2,$ the maximum of $|h_i|$ on $Q$ and the cubes neighbouring $Q$ (i.e., contained in $3Q$). Define the maximal functions $h_i^*$ on $\R^d$ by setting  $h_i^*(x)=M_{i}^Q$, where $Q$ is the unique atom in $\mathscr{D}_{n-1}$ containing $x.$ It was observed in \cite{JRW2} that the maximal function $h_i^*$ dominates the linear ball averages $A_{2^k}(h_i)$ if $k\le n-1$, and is bounded on $L^2$:
$$\int_{\R^d} [h_i^*]^2\lesssim \int_{\R^d} h_i^2.$$

In the bilinear contexts, it is easy to see that $|\A_{2^k}^{G}(h_1,h_2)|$ is smaller than $h_1^*(x)\cdot h_2^*(x)$ when $k<n$ (see \eqref{1.1} below). However, the desired $L^2$ bound
 $$\int_{\R^d} (h_1^*)^2\cdot (h_2^*)^2 \lesssim \int_{\R^d} |h_1|^2\cdot|h_2|^2$$
fails.
Indeed, in 1-dimensional case, let $I_1, I_2$ be two adjacent dyadic intervals, suppose $h_1=1$ in $I_2,$ and $h_1=0$ elsewhere,
 and let $h_2=1$ on $I_1$ and $h_2=0$ elsewhere. Then it is easy to see that the above inequality fails.

  To fix this issue, we observe that the following inequalities hold
 \begin{align*}
 \int_{\R^d} [(h_1^* \cdot |h_2|)^*]^2 &\lesssim \int_{\R^d} [h_1^* \cdot |h_2|]^2 \lesssim \int_{\R^d} [|h_1|\cdot|h_2|]^2,\\
 \int_{\R^d} [(|h_1|\cdot h_2^*)^*]^2 &\lesssim \int_{\R^d} [|h_1|\cdot h_2^*]^2 \lesssim \int_{\R^d} [|h_1|\cdot|h_2|]^2.
 \end{align*}
This observation leads to the following definition of bi-sublinear maximal function:
 $$[h_1,h_2]^+(x)=\max\{(h_1^* \cdot |h_2|)^*, \,  (|h_1|\cdot h_2^*)^*\}.$$
Then
\begin{equation}\label{new3.2}
  \int_{\R^d}\{ [h_1, h_2]^+\}^2 \lesssim \int_{\R^d} |h_1|^2\cdot|h_2|^2.
\end{equation}
Moreover, this maximal function dominates the bilinear averages, which is given by
\begin{lem}\label{lem:max}
Let $k<n$. Assume that $h_1$ and $h_2$ are $(n-1)$-measurable functions on $\R^d$, which means that they are constant-valued  on each atom $Q \in \mathscr{D}_{n-1}.$ Then
$$ |\A_{2^{k}}^G(h_1, h_2)(x)| \le [h_1, h_2]^+(x),\quad \forall \, x \in \R^d.$$
\end{lem}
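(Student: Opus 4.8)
The goal is to show the pointwise bound $|\A_{2^k}^G(h_1,h_2)(x)|\le [h_1,h_2]^+(x)$ for $k<n$ when $h_1,h_2$ are $(n-1)$-measurable. The plan is to unwind the definition of $\A_{2^k}^G$ and exploit the locality of the two ingredients: $G$ is a bounded convex body in $\R^{2d}$, so $G_{2^k}$ is contained in a ball of radius $C2^k$; since $k<n$, this radius is at most $C\cdot 2^{n-1}$. Thus for fixed $x$, the integration variable $(y_1,y_2)$ ranges over a set where $x+y_1$ and $x+y_2$ both lie in a bounded neighbourhood of $x$ of size comparable to $2^{n-1}$, hence (after possibly adjusting the dilation constant, as in \cite{JRW2}) in $3Q$ where $Q\in\mathscr{D}_{n-1}$ is the atom containing $x$.

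First I would write, using the slices $G_t^{y_1}=\{y_2:(y_1,y_2)\in G_t\}$,
\begin{align*}
\A_{2^k}^G(h_1,h_2)(x)=\frac{1}{|G_{2^k}|}\int h_1(x+y_1)\Big(\int_{G_{2^k}^{y_1}} h_2(x+y_2)\,dy_2\Big)\,dy_1.
\end{align*}
Since $h_2$ is $(n-1)$-measurable and $k<n$, the inner average of $h_2$ over the slice is bounded in absolute value by $h_2^*(x)$ — this is exactly the one-variable observation of \cite{JRW2}, applied to the (non-averaged, but still localized) integral: the key point is that every $x+y_2$ with $(y_1,y_2)\in G_{2^k}$ lies in an atom of $\mathscr{D}_{n-1}$ adjacent to $Q$, on which $|h_2|\le h_2^*(x)$. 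Pulling this bound out leaves $|\A_{2^k}^G(h_1,h_2)(x)|\le A_{2^k}^{G^{(1)}}(|h_1|\cdot h_2^*)(x)$, where $G^{(1)}$ is the projection-type convex body in $\R^d$ obtained by integrating out $y_2$ (with the appropriate normalization), and here I am using that $h_2^*$ is constant on $Q$ so it factors through. Then, since $|h_1|\cdot h_2^*$ is again $(n-1)$-measurable and $k<n$, the one-variable domination of linear convex-body averages by the starred maximal function — again from \cite{JRW2}, here applied with the body $G^{(1)}$ in place of a ball, which works verbatim since $G^{(1)}$ is bounded — gives $A_{2^k}^{G^{(1)}}(|h_1|\cdot h_2^*)(x)\le (|h_1|\cdot h_2^*)^*(x)\le [h_1,h_2]^+(x)$. (Alternatively one integrates out $y_1$ first, which yields the symmetric bound by $(h_1^*\cdot|h_2|)^*(x)$; either suffices, but taking the max makes the statement symmetric and is what is needed downstream for \eqref{new3.2}.)

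The one genuinely delicate point, and the step I expect to be the main obstacle, is justifying the claim ``every $x+y_i$ with $(y_1,y_2)\in G_{2^k}$ lies in $3Q$''. This requires pinning down the normalization: as in \cite{JRW2}, one should either absorb the universal constant $C$ (depending only on $\mathrm{diam}\,G$ and $d$) coming from $G_{2^k}\subset B(0,C2^k)$ into the definition of the dilates, or — more honestly — replace $3Q$ by $cQ$ for a dimensional constant $c$ in the definition of the starred maximal function and of $[h_1,h_2]^+$, at the cost of only a harmless constant in \eqref{new3.2}. I would state this normalization convention explicitly (it is implicit in the sentence ``It was observed in \cite{JRW2}\dots'' preceding the lemma) and then the geometric inclusion is immediate from $k\le n-1$, since $2^k\cdot\mathrm{diam}(G)\le 2^{n-1}\mathrm{diam}(G)$ and $x$ itself lies in $Q$ which has side $2^{n-1}$. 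Everything else is routine: the factorization of $h_2^*$ (or $h_1^*$) out of the integral uses only that it is $Q$-constant, and the reduction to the linear statement uses only Fubini and bilinearity.
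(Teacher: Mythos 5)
Your slicing-and-Fubini strategy is a genuinely different route from the paper's, which never slices: the paper partitions $(x,x)+G_{2^k}$ directly into the finitely many product atoms $\widetilde{Q}_{(I,J)}=Q_I\times Q_J$ of $\widetilde{\mathscr{D}}_{n-1}$ it meets, observes that $Q_I$ and $Q_J$ always neighbour one another, and bounds the integrand $|h_1(y_1)h_2(y_2)|$ on each such product atom. The slicing approach could also be made to work, but as written it has a real gap, and it sits precisely at the point that makes the lemma nontrivial. You bound $|h_2(x+y_2)|\le h_2^*(x)$ and then claim that, because ``$h_2^*$ is constant on $Q$, it factors through'' to give $|\A_{2^k}^G(h_1,h_2)(x)|\le A_{2^k}^{G^{(1)}}(|h_1|\cdot h_2^*)(x)$. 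It does not. Pulling out the \emph{number} $h_2^*(x)$ only yields
$$|\A_{2^k}^G(h_1,h_2)(x)|\;\le\; h_2^*(x)\cdot \frac{1}{|G_{2^k}|}\int |h_1(x+y_1)|\,|G_{2^k}^{y_1}|\,dy_1\;\le\; h_1^*(x)\,h_2^*(x),$$
and $h_1^*h_2^*$ is \emph{not} dominated by $[h_1,h_2]^+$: in $d=1$ with $n=1$, take $h_1=\1_{[-1,0)}$ and $h_2=\1_{[1,2)}$; then $h_1^*\,h_2^*\equiv 1$ on $[0,1)$, while $|h_1|\cdot h_2^*$ and $h_1^*\cdot|h_2|$ both vanish identically, so $[h_1,h_2]^+\equiv 0$ there. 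This is exactly the gap between the naive product of maximal functions and the maximal function $[h_1,h_2]^+$ that the paper goes to the trouble of introducing. The claimed identification with $A_{2^k}^{G^{(1)}}(|h_1|\cdot h_2^*)(x)$ also fails on inspection: that quantity carries $h_2^*(x+y_1)$ with $y_1$ ranging over the whole projection of $G_{2^k}$ --- a block of several neighbouring atoms, not just $Q$ --- so constancy of $h_2^*$ on $Q$ alone does not let you replace $h_2^*(x)$ by $h_2^*(x+y_1)$.

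The fix is to let the slice bound travel with the moving point: use $|h_2(x+y_2)|\le h_2^*(x+y_1)$, which holds for $(y_1,y_2)\in G_{2^k}$ because under the normalization $G_{2^k}\subset\widetilde{B}_{2^k}$ and $k<n$ the atom of $\mathscr{D}_{n-1}$ containing $x+y_2$ always neighbours the one containing $x+y_1$ --- this is precisely the paper's observation ``either $Q_I=Q_J$ or $Q_I$ neighbours $Q_J$''. With this bound the passage to $A_{2^k}^{G^{(1)}}(|h_1|\cdot h_2^*)(x)$ is an identity, and your final step ($(n-1)$-measurability of $|h_1|\cdot h_2^*$ plus the linear convex-body domination from \cite{JRW2}) finishes the proof as you intended. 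The delicate point you flagged --- pinning down the normalization so that $x+y_i\in 3Q$ --- is indeed present, but it is routine; the one you glossed over is that the bound on $h_2$ must be indexed by the moving point $x+y_1$ rather than frozen at $x$, and that is exactly the content of the lemma and the reason $h_1^*h_2^*$ cannot replace $[h_1,h_2]^+$.
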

\begin{proof}
Let $k\le n-1.$ For any $x\in \R^d,$ by \eqref{1.1}, $(x,x)+G_{2^k}\subset (x,x)+\widetilde{B}_{2^k}$. Therefore, $(x,x)+G_{2^k}$ can intersect with at most $2^{2d}$ atoms in $\dd_{n-1}$. Recall that $\dd_j$ denotes the set of all dyadic cubes in $\R^{2d}$ of side-length $2^j$.
We only deal with the case where $(x,x)+G_{2^k}$ intersects with exactly $2^{2d}$ atoms in $\dd_{n-1}$, as the other cases are easier and can be treated in the same way.  We denote these $2^{2d}$ atoms by $\widetilde{Q}_{(I,J)}$, where $(I,J)=(i_1,\ldots, i_d, j_1,\ldots,j_d)\in \Z^d\times \Z^d$ denotes the coordinates of their lower-left corner. Without loss of generality, we may assume that $n=1$ and the lower-left corner of the union of these atoms is the origin so that $i_k, j_\ell\in \{0,1\}$ for all $k,\ell=1,\ldots,d.$
In other words, $(I,J) \in \{0,1\}^d \times \{0,1\}^d.$

Writing
\begin{align*}
|\A_{2^{k}}^G(h_1, h_2)(x)|
 \le \frac{1}{|G_{2^k}|} \sum_{I,J\in \{0,1\}^d} \int_{[(x,x)+G_{2^k}]\, \cap\, \widetilde{Q}_{(I,J)}} |h_1(y_1)h_2(y_2)| dy_1 dy_2,
\end{align*}
we see that it suffices to show the pointwise bound: For every $(I,J)\in \{0,1\}^{2d}$,
$$|h_1(y_1)h_2(y_2)| \le [h_1, h_2]^+(x), \quad \forall \, (y_1,y_2)\in \qq_{(I,J)}=Q_I\times Q_J.$$

A key observation is that, for any $I, J\in \{0,1\}^d$, either $Q_I=Q_J$ or $Q_I$ neighbours $Q_J$; so, in either case,
we have
$|h(z)|\le h^*(w)$ for all $(n-1)$-measurable functions $h$ and all $z\in Q_I$ and $w\in Q_J$.
Hence, for any $(I,J)\in \{0,1\}^{2d}$ and $(y_1,y_2)\in \qq_{(I,J)}=Q_I\times Q_J,$
$$|h_1(y_1)h_2(y_2)|\le (|h_1|\cdot h_2^*)(y_1) \le (|h_1|\cdot h_2^*)^*(x)\le [h_1, h_2]^+(x), $$
concluding the proof.
\end{proof}

Let $S(\Bbb R^d)$ denote the space of functions of the form
$\sum_{i=1}^N c_i \1_{E_i},$ where each measurable subset $E_i$ of $\R^d$ has finite measure.
The following lemma is a multilinear version of Marcinkiewicz interpolation theorem with initial restricted weak type conditions (cf. \cite[Theorem 4.6]{GK}),
 which will be used to prove Theorem \ref{thm:main1}.
\begin{lem} \cite{GK,Grafakos} \label{interpolation}
Let $m$ be a positive integer, and let $T$ be a multi-sublinear
operator defined on $S(\R^d)\times \cdots \times S(\R^d)$ taking values in the set of measurable functions on $\R^d$.
For $1\le k \le m+1$ and $1\le j\le m$ we are given $p_{k,j}$,
with $0<p_{k,j}\le \infty$ and $0<q_k\le \infty.$
Suppose that the open convex hull of the points
$$\vec{P_k}=\Big(\frac{1}{p_{k,1}},\cdots, \frac{1}{p_{k,m}}\Big)$$
is an open set in $\R^d,$ and $T$ is of restricted weak types
$(p_{k,1},\ldots, p_{k,m}, q_k)$ for all $1\le k\le m+1$, that is,
$$\|T(\1_{E_1},\ldots, \1_{E_m})\|_{L^{q_k,\infty}} \le \theta_k \prod_{j=1}^m |E_j|^{\frac{1}{p_{k,j}}}$$
for all $1\le k\le m+1$ and for all subsets $E_j$ of $\R^d$
with $|E_j|<\infty.$
Let
$$\vec{P}=\Big(\frac{1}{p_1},\cdots, \frac{1}{p_m}\Big)=\sum_{k=1}^{m+1}\eta_k \vec{P}_k$$
for some $\eta_k\in (0,1)$ such that $\sum_{k=1}^{m+1} \eta_k =1,$ and define $1/q=\sum_{k=1}^{m+1} \eta_k/ q_k.$
If $\ga_j\not=0$ for all $j=1,\ldots, m$ and $1/q\le 1/p_1+\cdots+1/p_m,$ then $T$ is bounded from $L^{p_1}\times \cdots \times L^{p_m}$
 to $L^q,$ and moreover,
$$\|T(f_1,\ldots, f_m)\|_{L^q} \lesssim \Big(\prod_{k=1}^{m+1} \theta_k^{\eta_k}\Big)\Big(\prod_{j=1}^m \|f_j\|_{L^{p_j}}\Big)$$
for all $f_j\in L^{p_j},\, j=1,\ldots, m.$
\end{lem}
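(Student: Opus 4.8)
The plan is to follow the real-interpolation scheme of Grafakos and Kalton \cite{GK} (see also \cite[Section 7.2]{Grafakos}); since the statement is essentially their Theorem 4.6, I would reproduce its argument, which consists of a size decomposition of the inputs, an application of the restricted weak type hypotheses to the resulting dyadic blocks, and a combinatorial summation that exploits the position of $\vec P$ inside the simplex spanned by the $\vec P_k$.

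Since $T$ is a priori defined only on $S(\R^d)\times\cdots\times S(\R^d)$, I would first establish the quantitative inequality for $f_j\in S(\R^d)$ (normalizing $\|f_j\|_{L^{p_j}}=1$ by homogeneity), and obtain boundedness on $L^{p_1}\times\cdots\times L^{p_m}$ at the end by density and a limiting argument. The first substantive step is to decompose each $f_j$ according to the size of its values, writing $f_j=\sum_{s_j\in\Z}f_j^{s_j}$ with $f_j^{s_j}=f_j\1_{E_j^{s_j}}$ and $E_j^{s_j}=\{2^{s_j}\le|f_j|<2^{s_j+1}\}$, so that $|f_j^{s_j}|\sim 2^{s_j}\1_{E_j^{s_j}}$, $2^{s_jp_j}|E_j^{s_j}|\le1$ and $\sum_{s_j}2^{s_jp_j}|E_j^{s_j}|\lesssim1$. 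Multi-sublinearity then gives $|T(f_1,\dots,f_m)|\le\sum_{\vec s\in\Z^m}|T(f_1^{s_1},\dots,f_m^{s_m})|$.

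Next, for each multi-index $\vec s$ and each vertex index $k\in\{1,\dots,m+1\}$, I would peel off the amplitudes $2^{s_j}$ (after an elementary further dyadic splitting reducing the inputs to multiples of characteristic functions) and apply the restricted weak type $(p_{k,1},\dots,p_{k,m},q_k)$ hypothesis to $T(\1_{E_1^{s_1}},\dots,\1_{E_m^{s_m}})$, obtaining $\|T(f_1^{s_1},\dots,f_m^{s_m})\|_{L^{q_k,\infty}}\lesssim\theta_k\prod_{j}2^{s_j}|E_j^{s_j}|^{1/p_{k,j}}$; thus each dyadic block carries $m+1$ competing weak-type estimates. The core of the proof is then to control the distribution function $|\{|T(f_1,\dots,f_m)|>\la\}|$: writing $\la=\sum_{\vec s}\la_{\vec s}$, subadditivity of the measure and the block decomposition reduce matters to summing, over $\vec s$, the quantities $\la_{\vec s}^{-q_k}\big(\theta_k\prod_j2^{s_j}|E_j^{s_j}|^{1/p_{k,j}}\big)^{q_k}$, where for each $\vec s$ one is free to use the vertex $k=k(\vec s)$ that gives the sharpest estimate. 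Choosing the $\la_{\vec s}$ to decay geometrically away from a centre $\vec s_0=\vec s_0(\la)$ adapted to the level $\la$ and partitioning $\Z^m$ accordingly, a careful optimization produces exactly the interpolated exponents $1/p_j$ of $\vec P=\sum_k\eta_k\vec P_k$ and the constant $\prod_k\theta_k^{\eta_k}$; that $\vec P$ lies in the \emph{open} convex hull of the $\vec P_k$ is what forces strict geometric decay in every coordinate direction of $\vec s$, so all the series converge. This gives the weak-type bound $|\{|T(f_1,\dots,f_m)|>\la\}|\lesssim\la^{-q}\prod_k\theta_k^{\eta_kq}$. Finally, using that this holds at every level $\la$, that $1/q\le\sum_j1/p_j$, and that each $\ga_j\neq0$ (so there is no logarithmic obstruction to the $j$-th summation), I would upgrade this weak $L^q$ bound to the claimed strong $L^q$ bound by one more dyadic summation in $\la$, and then extend to general $f_j\in L^{p_j}$ by density.

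The step I expect to be the main obstacle is the combinatorial heart just described: one must choose the splitting $\{\la_{\vec s}\}$, the centre $\vec s_0$, and the partition of $\Z^m$ so as to simultaneously overcome the growth of $\prod_j2^{s_j}|E_j^{s_j}|^{1/p_{k,j}}$ in all $m$ coordinate directions of $\vec s$ at once. This can be arranged precisely because $\vec P$ is a genuine interior point of the simplex spanned by the $\vec P_k$ — which is where the hypotheses $\eta_k\in(0,1)$, $\sum_k\eta_k=1$, and the nondegeneracy $\ga_j\neq0$ enter — while the remaining hypothesis $1/q\le1/p_1+\cdots+1/p_m$ is what legitimizes the final passage from the weak to the strong $L^q$ estimate.
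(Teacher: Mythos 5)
The paper does not prove this lemma: it is imported verbatim (with the citation \cite{GK,Grafakos} in the theorem head) as a black-box interpolation tool, and the undefined symbols $\gamma_j$ in the hypothesis ``$\gamma_j\neq 0$'' are simply carried over from the statement of Theorem 4.6 in Grafakos--Kalton without being explained in this paper. So there is no in-paper proof to compare against; what you have written is a summary of the argument in the cited references themselves, and as such it captures the right skeleton (size decomposition into $f_j=\sum_{s_j}f_j^{s_j}$, application of the $(m+1)$ competing restricted weak-type bounds to each dyadic block, and a combinatorial summation over $\vec s\in\Z^m$ that works because $\vec P$ is interior to the simplex). Two points worth tightening if you were to fill in the details: first, you should make explicit what the $\gamma_j$ are (in Grafakos--Kalton they are the coordinates of $\vec P$ relative to the affine frame spanned by the $\vec P_k$, and $\gamma_j\neq0$ is precisely what guarantees strict geometric decay in the $j$-th $\vec s$-direction); second, the ``optimization'' and ``partition of $\Z^m$'' step is the actual content of the proof and is genuinely delicate --- one needs to verify that for each multi-index $\vec s$ the vertex $k(\vec s)$ giving the dominant bound can be chosen so that the resulting $m$-fold geometric series converges simultaneously in all directions, and the bookkeeping there is where all of the convex-geometry hypotheses get used. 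As a blind reconstruction of the cited proof, your outline is consistent with the source and not in conflict with anything the paper does.
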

 \section{$L^\infty \times L^2 \rightarrow L^2$ estimate}\label{Sec3}
{Since $V_q(\A_t^G(f_1,f_2):t>0)=V_q(\A_t^{G_{t_0}}(f_1,f_2):t>0)$ for any $t_0>0,$ we may assume throughout that $G$ is normalized such that
\begin{equation}\label{1.1}
  \B_\tau \subset G \subset \B_1 \qquad \text{for some } 0<\tau<1.
\end{equation}}
We shall show that the $L^\infty \times L^2 \rightarrow L^2$ estimate holds when $f_1\in L_c^\infty$ and $f_2\in L^2$.
To proceed, we first divide the $q$-variation into long and short variations. Let $(t_i)_{i\geq0}$ be an increasing sequence.
For each interval $I_i=(t_i,t_{i+1}]$, we consider two cases.
\begin{itemize}
  \item Case 1: $I_i$ does not contain any power of 2;
  \item Case 2: $I_i$ contains powers of 2. In this case, we decompose $I_i$ further as disjoint union of intervals like $(t_i, 2^{m_i}]\cup(2^{m_i},2^{n_i}]\cup(2^{n_i}, t_{i+1}]$ such that $(t_i, 2^{m_i}]$ or $(2^{n_i}, t_{i+1}]$ do not contain any more power of 2.
\end{itemize}
Let
$\ss$ be the set of all ``short intervals'' consisting of all intervals that is contained in $(2^k, 2^{k+1}]$ for some $k\in \Z$, that is the intervals in Case 1 and the ones of the form $(t_i, 2^{m_i}]$ or $(2^{n_i}, t_{i+1}]$ in Case 2.
Let $\ll$ consist of all disjoint intervals of the form $(2^{m_j}, 2^{n_j}]$ in Case 2.

Clearly, $\ss\cup \ll$ is a disjoint family of intervals. We use the following pointwise inequality:
\begin{align*}
&\Big(\sum_{i\geq0} |(\A_{t_{i+1}}^G- \A_{t_{i}}^G) (f_1,f_2) |^q\Big)^{\frac{1}{q}} \\
& \lesssim \sup_{(t_i)_i} \Big(\sum_{I_i\in \ll} |(\A_{t_{i+1}}^G - \A_{t_i}^G)(f_1,f_2) |^q \Big)^{\frac{1}{q}}+ \sup_{(t_i)_i} \Big(\sum_{I_i\in \ss} |(\A_{t_{i+1}}^G- \A_{t_{i}}^G) (f_1,f_2)|^q \Big)^{\frac{1}{q}}\\
&\lesssim V_q(\A^G_{2^k}(f_1,f_2):\,k\in\mathbb Z)+\Big(\sum_{k\in\mathbb Z}V^q_q(\A^G_{t}(f_1,f_2):\,t\in(2^k,2^{k+1}])\Big)^{\frac1q}\\
&=: \lv (f_1,f_2)+ \svq(f_1,f_2).
\end{align*}
We call $\lv$ and $\svq$ long and short variation operators respectively.

The bounds of $\lv$ and $\svq$ will be proved in the following two subsections.

\subsection{Long variation estimates}\label{sub3.1}

To prove the long variation estimates, we shall compare the bilinear averaging operators with conditional expectation,
which reduces matters to bounding a certain bilinear square function.
More specifically,
define a bilinear operator $\L_k$ by
$$\L_k(f_1, f_2)(x)=\A_{2^k}^G(f_1,f_2)(x)-\E_k f_1 (x) \cdot \E_k f_2(x),$$
and the square operator $\L$ by
$$
\L(f_1,f_2)(x)=\bigg( \sum_{k\in \Z}|\L_k(f_1, f_2)(x)|^2 \bigg)^{\frac{1}{2}}.
$$
Then the following estimate holds pointwise
\begin{align*}
\lv(f_1,f_2)& \lesssim  \L(f_1,f_2)+\sup_{(n_i)_i}\Big(\sum_{i} |(\E_{n_{i+1}} - \E_{n_i})(f_1,f_2) |^q \Big)^{\frac{1}{q}}.
\end{align*}
The second term is just the $q$-variation for martingales, for which the desired bound follows from \eqref{eq:1.4} and the known $q$-variation inequalities for martingales (cf. \cite{PX}).
It suffices to establish
\begin{equation*}\label{1}
  \left\|\L(f_1,f_2) \right\|_2\lesssim \|f_1\|_{\infty} \|f_2\|_{2}.
\end{equation*}

Since $f_1,f_2\in L^2(\R^d)$, we may write $f_1=\sum_{n\in \Z} d_{1,n}$ and $f_2=\sum_{m\in \Z} d_{2,m}$,
where $d_{1,n}= d_n(f_1)$ and $d_{2,n}= d_n(f_2)$ are martingale differences and both series converge almost everywhere and in the topology of $L^2$.
Moreover,
$$  \sum_n\|d_{1,n}\|_2^2 = \|f_1\|_2^2,\quad \sum_n\|d_{2,n}\|_2^2=\|f_2\|_2^2.$$
Using Lemma \ref{para},
it thus suffices to show
\begin{align}
\sum_{k\in \Z}  \|\sum_{n>k}\L_k( d_{1,n}, \E_{n-1}f_2)\|_{2} ^2 &\lesssim \|f_1\|_{\infty}^2 \|f_2\|_{2}^2, \label{aim1}\\
\sum_{k\in \Z}  \|\sum_{n\le k} \L_k( d_{1,n}, \E_{n-1}f_2)\|_{2}^2 &\lesssim \|f_1\|_{\infty}^2 \|f_2\|_{2}^2, \label{aim2}\\
\sum_{k\in \Z}  \|\sum_{n>k} \L_k(\E_{n}f_1,  d_{2,n})\|_{2}^2 &\lesssim \|f_1\|_{\infty}^2 \|f_2\|_{2}^2, \label{aim3}\\
\sum_{k\in \Z}  \|\sum_{n\le k} \L_k(\E_{n}f_1,  d_{2,n})\|_{2}^2 &\lesssim \|f_1\|_{\infty}^2 \|f_2\|_{2}^2. \label{aim4}
\end{align}

We first prove \eqref{aim1} and \eqref{aim3}.
We assume $n>k,$ then $\E_k d_{1,n} =d_{1,n}$.
Write
\begin{align*}
&\| \L_k(d_{1,n}, \E_{n-1}f_2)\|_2^2=\sum_{Q\in \mathscr{D}_{n-1}} \int_Q | \L_{k}(d_{1,n}, \E_{n-1}f_2)(x)|^2 \, dx.
\end{align*}
Since $d_{1,n}$ and $\E_{n-1}f_2$ are both constants on the atom $Q\in \mathscr{D}_{n-1}$, we have
$$\L_{k}(d_{1,n}, \E_{n-1}f_2)(x)=\A_{2^k}^G(d_{1,n}, \E_{n-1}f_2)(x)-d_{1,n}(x) \cdot \E_{n-1}f_2(x)=0$$
if $(x,x)+ G_{2^k}\subset Q\times Q.$ Thus, for $x\in Q,$
 $\L_{k}(d_{1,n}, \E_{n-1}f_2)(x)$ is nonzero only if $(x,x)+G_{2^k}$ intersects with the complement of $Q\times Q.$
For any subset $E$ of $\R^{2d}$ containing the origin, denote
$$\H(E, Q)=\{x\in Q:\, [(x,x)+E] \cap (Q\times Q)^c \not= \varnothing\}.$$
By \eqref{1.1},  $\H(G_{2^k}, Q) \subset \H(\widetilde{B}_{2^k}, Q),$ and
$$|\H(G_{2^k}, Q)|\le  |\H(\widetilde{B}_{2^k}, Q)|\lesssim 2^{(d-1)n}\cdot 2^k.$$
From this, the Cauchy-Schwarz inequality and Lemma \ref{lem:max}, it follows that
\begin{align*}
\int_{Q} | \L_{k}(d_{1,n}, \E_{n-1}f_2)|^2 &= \int_{\H(G_{2^k}, Q)} | \L_{k}(d_{1,n}, \E_{n-1}f_2)|^2\\
& \lesssim 2^{(d-1)n}\cdot 2^k \cdot \{[d_{1,n}, \E_{n-1}f_2]^+\}^2 = 2^{-|k-n|} \int_ Q \, \{[d_{1,n}, \E_{n-1}f_2]^+\}^2.
 \end{align*}
Summing this over $Q\in \mathscr{D}_{n-1}$ and using \eqref{new3.2}, we get
\begin{align*}
\| \L_{k}(d_{1,n}, \E_{n-1}f_2) \|_2 &\lesssim 2^{-\frac{|k-n|}{2}} \|[d_{1,n}, \E_{n-1}f_2]^+\|_2 \lesssim 2^{-\frac{|k-n|}{2}} \|d_{1,n}\cdot \E_{n-1} f_2 \|_2.
\end{align*}
Applying {Lemma \ref{almost}, (i) with $\S_k=\L_k$, $u_n=d_{1,n}$, $v_n=\E_{n-1}f_2$, $a(d_{1,n},\E_{n-1}f_2)=\|d_{1,n}\cdot\E_{n-1}f_2\|_2$ and $\sigma(n)=-|n|/2$}, Lemma \ref{Lem 4.1}, (i), and the Carleson inequality, we obtain
 \begin{align*}
\sum_{k} \Big(\sum_{n>k}\|\L_{k}(d_{1,n}, \E_{n-1}f_2) \|_2\Big)^2 &\lesssim \Big(\sum_n 2^{-|n|/2}\Big)^2\cdot \Big(\sum_n \int_{\R^d}  |d_{1,n}|^2\cdot |\E_{n-1} f_2|^{2}\Big)\\
 & \lesssim \|f_1\|_{BMO}^2 \|\M(f_2)\|_2^2\lesssim \|f_1\|_{\infty}^2 \|f_2\|_2^2.
 \end{align*}
This concludes the proof of \eqref{aim1}.

To show \eqref{aim3},
by the same arguments as above, we get
 \begin{align*}
\sum_{k} \Big(\sum_{n>k} \| \L_{k}(\E_{n}f_1, d_{2,n}) \|_2\Big)^2
& \lesssim  \sum_n \int_{\R^d}  |\E_n f_1|^{2}  \cdot |d_{2,n}|^2.
\end{align*}
Now, using the simple estimate $\sup_n\|\E_n f_1\|_\infty\le \|f_1\|_\infty$, the last term above is majorized by
$\|f_1\|_{\infty}^2 \sum_n   \|d_{2,n}\|_2^2 =\|f_1\|_{\infty}^2 \|f_2\|_2^2,$
which gives \eqref{aim3}.

We first prove \eqref{aim2} and \eqref{aim4}. Assume $n\le k.$ Since $\E_k d_{1,n}=0$ in this case,  \eqref{aim2} and \eqref{aim4} will follow from
the following pointwise estimates respectively:  For $1<l<2$,
\begin{equation}\label{eq4.1}
  | \A_{2^k}^G(d_{1,n}, \E_{n-1}f_2) | ^2 \lesssim  2^{-|k-n|(2-2/l)} \A_{2^k}^G(|d_{1,n}|^l, |\E_{n-1}f_2|^l)^{2/l},
\end{equation}
and
\begin{equation}\label{eq4.2}
  | \A_{2^k}^G(\E_{n}f_1, d_{2,n}) | ^2 \lesssim  2^{-|k-n|(2-2/l)} \A_{2^k}^G( |\E_{n}f_1|^l, |d_{2,n}|^l)^{2/l}.
\end{equation}
Indeed, assume that \eqref{eq4.1} and \eqref{eq4.2} hold for the moment, let us prove \eqref{aim2} and \eqref{aim4}. Denote by $Q_{2^k}$ the cube centered at origin and having side-length $2^k$ in $\R^d$ and let $\widetilde \1_{Q_{2^k}}=|Q_{2^k}|^{-1}\1_{Q_{2^k}}$. Recalling that $G_{2^k}\subset \widetilde{B}_{2^{k}}$, we have
\begin{align*}
\A_{2^k}^G(|g_1|, |g_2|)(x)&\lesssim \left(\frac{1}{|Q_{2^k}|}\int_{Q_{2^k}}|g_1(x-y_1)|dy_1\right) \left(\frac{1}{|Q_{2^k}|}\int_{Q_{2^k}}|g_2(x-y_2)|dy_2\right)\\
&=(\widetilde{\1}_{Q_{2^k}} *|g_1|)(x) \cdot (\widetilde{\1}_{Q_{2^k}} *|g_2|)(x) \lesssim(\zeta_k *|g_1|)(x) \cdot (\zeta_k *|g_2|)(x),
\end{align*}
where $\zeta(x)=(1+|x|)^{-d-\epsilon}$ for some $\epsilon>0$.
Applying \eqref{eq4.1} and the above estimate with $g_1=|d_{1,n}|^l$ and $g_2=|\E_{n-1}f_2|^l$, we get
 \begin{align*}
\| \A_{2^k}^G(d_{1,n}, \E_{n-1}f_2) \|_2 &\lesssim   2^{-(1-1/l)|k-n|} \| \A_{2^k}^G(|d_{1,n}|^l, |\E_{n-1}f_2|^l)^{1/l}\|_{2}\\
&\lesssim   2^{-(1-1/l)|k-n|} \|(\zeta_k* |d_{1,n}|^l)^{1/l} \cdot (\zeta_k* |\E_{n-1}f_2|^l)^{1/l}\|_{2}.
 \end{align*}
{Applying Lemma \ref{almost}, (ii) with $a_{k}(u_n,v_n)=\|(\zeta_k* |u_n|^l)^{1/l} \cdot (\zeta_k* |v_n|^l)^{1/l}\|_{2}$, $u_n=d_{1,n}$ and $v_n=\E_{n-1}f_2$}, and Lemma \ref{lem: 2.4} with $b=f_1$ and $f=\M f_2$, we deduce that
 \begin{align*}
&{\sum_{k} \|\sum_{n\le k} \A_{2^k}^G(d_{1,n}, \E_{n-1}f_2) \|_2^2} \\
&\quad\lesssim   \sup_{n'\in \N}\sum_k \int_{\R^d}  (\zeta_k* |d_{1,k-n'}|^l)^{2/l}(x)  (\zeta_k* |\E_{k-n'-1}f_2|^l)^{2/l}(x)\, dx \\
&\quad\lesssim   \sup_{n'\in \N}\sum_k \int_{\R^d}  (\zeta_k* |d_{1,k-n'}|^l)^{2/l}(x)  (\zeta_k* |\M f_2|^l)^{2/l}(x)\, dx \\
&\quad \lesssim \|f_1\|_{BMO}^2 \|\M f_2\|_{2}^2 \lesssim   \|f_1\|_{\infty}^2 \|f_2\|_{2}^2,
 \end{align*}
 which gives  \eqref{aim2}.

For \eqref{aim4}, integrating both sides of \eqref{eq4.2} and using the same arguments as above, we have
 \begin{align*}
& \sum_{k}  \Big\|\sum_{n\le k} \A_{2^k}^G(\E_{n}f_1, d_{2,n}) \Big\|_2^2 \\
&\quad \lesssim \sup_{n'\in \N}\sum_k \int_{\R^d}  (\zeta_k* |d_{2,k-n'}|^l)^{2/l}(x)  (\zeta_k* |\E_{k-n'-1}f_1|^l)^{2/l}(x)\, dx\\
&\quad \lesssim \sup_{n'\in \N}\sum_k \int_{\R^d}  \M( |d_{2,k-n'}|^l)^{2/l}(x)  \cdot \M( \M f_1^l)^{2/l}(x)\, dx\\
&\quad \lesssim \|\M( \M f_1^l)^{2/l}\|_\infty \sup_{n'\in \N}\sum_k \|  \M( |d_{2,k-n'}|^l)\|_{L^{2/l}}^{2/l}\\
&\quad \le  \|f_1\|_{\infty}^2 \cdot \Big(\sup_{n'\in \N}\sum_{k}\|d_{2,k-n'} \|_2^2 \Big)  \lesssim \|f_1\|_\infty^2 \|f_2\|_2^2,
 \end{align*}
which gives \eqref{aim4}.

Let us now show \eqref{eq4.1} and \eqref{eq4.2} under the assumption $n\le k$.
We divide $\R^{2d}$ into all atoms $\hh$ in $\dd_n,$ and write
$$ \int_{(x,x)+G_{2^k}} (d_{1,n}\otimes \E_{n-1} f_2) =\sum_{\hh\in\dd_n} \int_{[(x,x)+G_{2^k}]\, \cap\, \hh} (d_{1,n}\otimes \E_{n-1} f_2).$$
Writing $\hh=Q_1\times Q_2\in \D_n\times \D_n,$  then $\int_{Q_1} d_{1,n}=0$ if $\hh\subset (x,x)+G_{2^k}$, which implies
$$
\int_{\hh} \, (d_{1,n}\otimes \E_{n-1} f_2)  =\left(\int_{Q_1} d_{1,n}\right) \left(\int_{Q_2}\E_{n-1} f_2\right)=0.
$$
Hence $\int_{[(x,x)+G_{2^k}]\, \cap\, \hh} (d_{1,n}\otimes \E_{n-1} f_2)$ is non-zero only if $\hh$ intersects with the boundary of $(x,x)+G_{2^k}.$ Recall that
\begin{equation}\label{set}
I((x,x)+G_{2^k}, n)=\bigcup \, \{[(x,x)+G_{2^k}] \cap \hh:\, \hh\in \dd_n,\, [(x,x)+\partial G_{2^k}] \cap \hh \not=\varnothing\}.
\end{equation}
Then
 \begin{align*}
  \int_{(x,x)+G_{2^k}} (d_{1,n}\otimes \E_{n-1} f_2)
  & =\int_{ I((x,x)+ G_{2^k}, n)}(d_{1,n}\otimes \E_{n-1} f_2)
 \end{align*}
 and
\begin{equation}\label{1.2}
|I((x,x)+G_{2^k}, n)| \lesssim 2^{n-k} 2^{k2d}
\end{equation}
(See \cite{JRW} for more discussions on \eqref{1.2}).

 Using H\"older's inequality, \eqref{1.2} and \eqref{1.1}, we get that
 for $1<l<2$,
 \begin{align*}
  |\A_{2^k}^G(d_{1,n}, \E_{n-1}f_2) | ^l &=\frac{1}{| G_{2^k}|^l} \left|\int_{ I((x,x)+ G_{2^k}, n)}(d_{1,n}\otimes \E_{n-1} f_2)\right|^l\\
  & \lesssim \frac{1}{(2^{2dk})^l} (2^{n+(2d-1)k})^{l-1}  \int_{ (x,x)+ G_{2^k}} (|d_{1,n}|^l\otimes |\E_{n-1} f_2|^l)\\
  &\lesssim 2^{(n-k)(l-1)}\frac{1}{| G_{2^k}|} \int_{ (x,x)+ G_{2^k}} (|d_{1,n}|^l\otimes |\E_{n-1} f_2|^l),
 \end{align*}
 which implies \eqref{eq4.1}. Inequality \eqref{eq4.2} can be proved analogously, the details being omitted.
 This concludes the proof of the $L^\infty \times L^2\rightarrow L^2$ long variation estimates. \\

 \subsection{Short variation estimates}

By the embedding $\ell^2\hookrightarrow \ell^q$, it suffices to bound $\sv$ instead of $\svq$.
Define a bi-subadditive operator $\g_k$   by
 $$\g_k (f_1,f_2)(x)=\Big( \sum_{I_i\in \ss_k} |(\A_{t_{i+1}}^G- \A_{t_i}^G)(f_1,f_2)(x)|^2\Big)^{\frac{1}{2}},$$
where we recall that $I_i \in \ss_k$ means that $I_i=(t_i, t_{i+1}]\subset (2^k, 2^{k+1}]$.
We need to show
 \begin{equation}\label{2}
  \bigg\|\Big(\sum_{k\in \Z} [\g_k(f_1,f_2)]^2\Big)^{1/2} \bigg\|_2\lesssim \|f_1\|_{\infty} \|f_2\|_{2}.
\end{equation}
Similar to \eqref{new3.3}, we can express $(\A_{t_{i+1}}^G- \A_{t_i}^G)(f_1, f_2)$ as
\begin{align}\label{new312}
\begin{split}
&(\A_{t_{i+1}}^G- \A_{t_i}^G)(f_1, f_2)\\
&\qquad = \sum_{n\in \Z} (\A_{t_{i+1}}^G- \A_{t_i}^G)( d_{1,n}, \E_{n-1}f_2) +\sum_{n \in \Z} (\A_{t_{i+1}}^G- \A_{t_i}^G)(\E_{n} f_1, d_{2,n}),
\end{split}
\end{align}
where the series converges pointwise.
Taking the $\ell^2$ norm over $I_i\in \ss_k$ and using triangle inequality, we get
\begin{align*}
\g_k(f_1, f_2)
&\le \sum_{n\in \Z} \g_k ( d_{1,n}, \E_{n-1}f_2) +\sum_{n \in \Z} \g_k  (\E_{n} f_1, d_{2,n}).
\end{align*}

To show \eqref{2}, we consider two cases $n>k$ and $n\le k$ separately, and
it then suffices to show
{
\begin{align}\label{aim5}
\begin{split}
\sum_{k\in \Z}  \|\sum_{n>k+1}\g_k( d_{1,n}, \E_{n-1}f_2)\|_{2}^2 &\lesssim \|f_1\|_{\infty}^2 \|f_2\|_{2}^2, \\
\sum_{k\in \Z}   \|\sum_{n\le k+1} \g_k( d_{1,n}, \E_{n-1}f_2)\|_{2}^2 &\lesssim \|f_1\|_{\infty}^2 \|f_2\|_{2}^2,  \\
\sum_{k\in \Z}    \|\sum_{n>k+1} \g_k(\E_{n}f_1,  d_{2,n})\|_{2}^2 &\lesssim \|f_1\|_{\infty}^2 \|f_2\|_{2}^2,  \\
\sum_{k\in \Z}     \|\sum_{n\le k+1} \g_k(\E_{n}f_1,  d_{2,n})\|_{2}^2 &\lesssim \|f_1\|_{\infty}^2 \|f_2\|_{2}^2.
\end{split}
\end{align}}
We only show the first two inequalities as the others can be handled similarly.

Let us show the first inequality of \eqref{aim5}. We assume $n>k+1$. By the almost orthogonality principle, matters are reduced to showing
 \begin{align}
 \|\g_k(d_{1,n}, \E_{n-1}f_2)\|_2^2 \lesssim 2^{-(n-k)} \|d_{1,n}\cdot \E_{n-1}f_2\|_{2}^2.\label{3.3}
 \end{align}
In fact, once \eqref{3.3} is established,  {it then follows from  Lemma \ref{almost}, (i)
with $\S_k=\g_k$, $u_n=d_{1,n}$, $v_n=\E_{n-1}f_2$ and $a(u_n,v_n)=\|d_{1,n}\cdot \E_{n-1}f_2\|_2$ that
\begin{align*}
&  \sum_{k\in \Z}   \|\sum_{n>k+1} \g_k(d_{1,n}, \E_{n-1}f_2)\|_2  ^2  \lesssim  \sum_n \|d_{1,n}\cdot \E_{n-1}f_2\|_{2}^2 ,
\end{align*}}
which is bounded by $C\|f_1\|_{\infty}^2 \|f_2\|_2^2$, as shown in the proof of \eqref{aim1} and \eqref{aim3}.

Let us show \eqref{3.3}.
 We write
\begin{align*}
 \|\g_k(d_{1,n}, \E_{n}f_2)\|_2^2 & =  \sum_{Q\in \mathscr{D}_{n-1}}  \int_Q \sum_{I_i \in \ss_k} |(\A_{t_{i+1}}^G- \A_{t_i}^G)(d_{1,n}, \E_{n-1}f_2)(x)|^2\, dx.
\end{align*}
We have
 \begin{align*}
&\left( \sum_{I_i \in \ss_k} |(\A_{t_{i+1}}^G-\A_{t_i}^G)(d_{1,n}, \E_{n-1}f_2)|^2\right)^{\frac{1}{2}}(x) \\
& \le \sum_{I_i \in \ss_k} |(\A_{t_{i+1}}^G-\A_{t_i}^G)(d_{1,n}, \E_{n-1}f_2)|(x)\\
& \le \sum_{I_i \in \ss_k} \frac{1}{| G_{t_i}|} \int_{(x,x)+ G_{t_{i+1}}\backslash G_{t_i}} |d_{1,n}|\otimes |\E_{n-1}f_2|\\
&\qquad +\sum_{I_i \in \ss_k} \left(\frac{1}{| G_{t_i}|} -\frac{1}{| G_{t_{i+1}}|}\right)\int_{(x,x)+ G_{t_{i+1}}} |d_{1,n}|\otimes |\E_{n-1}f_2|\\
& \le  \frac{1}{| G_{2^k}|} \sum_{I_i \in \ss_k}\int_{(x,x)+ G_{t_{i+1}}\backslash G_{t_i}} |d_{1,n}|\otimes |\E_{n-1}f_2|\\
&\qquad +\left( \sum_{I_i \in \ss_k} \frac{1}{| G_{t_i}|} -\frac{1}{| G_{t_{i+1}}|}\right)\int_{(x,x)+ G_{2^{k+1}}} |d_{1,n}|\otimes |\E_{n-1}f_2|\\
& \lesssim  \frac{1}{|G_{2^k}|} \int_{(x,x)+ G_{2^{k+1}}} |d_{1,n}|\otimes |\E_{n-1}f_2|\\
& \lesssim \A_{2^{k+1}}^G(|d_{1,n}|, |\E_{n-1}f_2|)(x) \le [d_{1,n},\E_{n-1} f_2]^+(x),
 \end{align*}
where we used Lemma \ref{lem:max} in the last inequality.
For each $Q\in \mathscr{D}_{n-1}$ and $x\in Q$, since $d_{1,n} \otimes \E_{n-1}f_2$ is constant on $Q\times Q$, we see that
 $$\sum_{I_i \in \ss_k} |( \A_{t_{i+1}}^G- \A_{t_i}^G)(d_{1,n}, \E_{n-1}f_2)(x)|^2$$ is nonzero only if,
 for some $I_i\in \ss_k$, the convex body $(x,x)+ G_{t_{i+1}}$ (hence $(x,x)+ G_{2^{k+1}}$) intersects with the complement of $Q\times Q.$
Hence
\begin{align*}
\int_Q \sum_{I_i \in \ss_k} |( \A_{t_{i+1}}^G-\A_{t_i}^G)(d_{1,n}, \E_{n-1}f_2)|^2
& \lesssim \int_{\H( G_{2^{k+1}}, Q)} \{[d_{1,n},\E_{n-1} f_2]^+\}^2 \\
& \lesssim 2^{(d-1)n} 2^k \{[d_{1,n},\E_{n-1} f_2]^+\}^2 \\
& = 2^{k-n} \int_{Q} \{[d_{1,n},\E_{n-1} f_2]^+\}^2,
\end{align*}
where
$$\H(G_{2^{k+1}}, Q)=\{x\in Q:\, [(x,x)+G_{2^{k+1}}] \cap (Q\times Q)^c \not= \varnothing\},$$
and its measure is no more than $2^{(d-1)n} 2^k$.
Summing the above over $Q\in \mathscr{D}_{n-1}$, and using \eqref{new3.2}, we obtain
\begin{align*}
& \sum_{Q\in \mathscr{D}_{n-1}}\int_Q \sum_{I_i \in \ss_k} |( \A_{t_{i+1}}^G- \A_{t_i}^G)(d_{1,n}, \E_{n-1}f_2)|^2 \\
&\qquad \lesssim 2^{k-n}  \int_{\R^d} \, \{[d_{1,n},\E_{n-1} f_2]^+\}^2 \lesssim 2^{k-n}  \int_{\R^d} |d_{1,n}|^2\cdot |\E_nf_2|^2,
\end{align*}
which verifies \eqref{3.3}, and hence the first inequality in \eqref{aim5}.

Finally, let us prove the second inequality in \eqref{aim5}. We assume $k\ge n-1.$ By arguments similar to that in the long variation case, \eqref{aim5} will follow from the pointwise estimate: For $1<l<2,$
 \begin{equation}\label{313}
 \sum_{I_i \in \ss_k} |( \A_{t_{i+1}}^G- \A_{t_i}^G)(d_{1,n}, \E_{n-1}f_2)|^2 \le 2^{(n-k)(2-2/l)} \A_{2^{k+1}}^G (|d_{1,n}|^l, |\E_{n-1}f_2|^l)^{2/l}.
 \end{equation}

To show \eqref{313}, we write
\begin{align*}
&\sum_{I_i \in \ss_k} |(\A_{t_{i+1}}^G- \A_{t_i}^G)(d_{1,n}, \E_{n-1}f_2)|^2\\
 &\lesssim  \sum_{I_i \in \ss_k} \left|\frac{1}{| G_{t_i}|}\int_{(x,x)+ G_{t_{i+1}}\backslash G_{t_i}}(d_{1,n} \otimes \E_{n-1}f_2)\right|^2 \\
 &\qquad + \sum_{I_i \in \ss_k} \left(\frac{1}{| G_{t_i}|}-\frac{1}{| G_{t_{i+1}}|}\right)^2 \left| \int_{(x,x)+ G_{t_{i+1}}} d_{1,n}\otimes \E_nf_2\right|^2\\
 & \lesssim  \frac{1}{| G_{2^k}|^2} \left( \sum_{I_i \in \ss_k} \left|\int_{(x,x)+ G_{t_{i+1}}\backslash G_{t_i}}(d_{1,n} \otimes \E_{n-1}f_2)\right|^l \right)^{2/l} \\
& \qquad + \left(  \sum_{I_i \in \ss_k} \left(\frac{1}{| G_{t_i}|}-\frac{1}{| G_{t_{i+1}}|}\right)^{l} \sup_{I_i\in \ss_k} \left| \int_{(x,x)+ G_{t_{i+1}}} d_{1,n}\otimes \E_nf_2\right|^l \right)^{2/l}\\
  &=:I+II.
\end{align*}

 For the first term $I$, 
 by the fact that  $$
 \int_{\hh} \, (d_{1,n}\otimes \E_{n-1} f_2)  =\left(\int_{Q_1} d_{1,n}\right) \cdot \left(\int_{Q_2} \E_{n-1} f_2\right)=0
 $$ for any $\hh=Q_1\times Q_2\in \dd_n,$ we have
 $$
 \int_{(x,x)+ G_{t_{i+1}}\backslash G_{t_i}}  (d_{1,n}\otimes \E_{n-1} f_2)  =\int_{I((x,x)+ G_{t_{i+1}}\backslash G_{t_i}, n) } (d_{1,n}\otimes \E_{n-1} f_2),
 $$
 where $I((x,x)+ G_{t_{i+1}}\backslash G_{t_i}, n)$ was defined as in \eqref{set}.
From H\"older's inequality, the estimate $|I((x,x)+ G_{t_{i+1}} \backslash G_{t_i}, n)|\lesssim 2^n 2^{(2d-1)k}$ and \eqref{1.1},
it follows that
 \begin{align*}
 I^{l/2} & \lesssim \frac{1}{(2^{2kd})^l}   (2^n2^{(2d-1)k})^{l-1} \sum_{I_i\in \ss_k} \int_{(x,x)+ G_{t_{i+1}}\backslash G_{t_i}} ( |d_{1,n}|^l \otimes |\E_{n-1}f_2|^l) \\
 & \lesssim 2^{(n-k)(l-1)} \frac{1}{| G_{2^{k+1}}|} \int_{(x,x)+ G_{2^{k+1}}} ( |d_{1,n}|^l \otimes |\E_{n-1}f_2|^l),
 \end{align*}
which gives the required bound for $I.$

For the second term $II$, we have
$$
\sum_{I_i\in \ss_k}\left( \frac{1}{| G_{t_i}|}-\frac{1}{| G_{t_{i+1}}| } \right)^{l}\le \left(\frac{1}{| G_{2^k}|}-\frac{1}{| G_{2^{k+1}}| }\right)^{l} \lesssim 2^{-2dkl}.
$$
Using H\"older's inequality, the estimate $|I((x,x)+ G_{t_i}, n)|\lesssim 2^n 2^{(2d-1)k},$ and \eqref{1.1}, we deduce that
\begin{align*}
 II^{l/2} & \lesssim \frac{1}{(2^{2dk})^l} (2^n2^{(2d-1)k})^{l-1} \sup_{I_i\in \ss_k} \int_{(x,x)+ G_{t_{i+1}}} ( |d_{1,n}|^l \otimes |\E_{n-1}f_2|^l) \\
&  \lesssim 2^{(n-k)(l-1)}   \frac{1}{| G_{2^{k+1}}|} \int_{(x,x)+ G_{2^{k+1}}} ( |d_{1,n}|^l \otimes |\E_{n-1}f_2|^l) ,
 \end{align*}
which gives the required bound for $II.$
This verifies \eqref{313} and hence completes the proof of the $L^{\infty}\times L^{2}\rightarrow L^2$ short variation estimate.

\begin{rem} \label{rem3.1}
The hypothesis $f_1\in L_c^\infty$ can be replaced by $f_1\in L^\infty \cap L^2$, and
the $L^\infty \times L^2 \to L^2$ bound still holds true. Interchanging the roles of $f_1$ and $f_2,$
we can conclude that the $L^2\times L^\infty \to L^2$ variational inequality holds for $f_1\in L^2$ and $f_2\in L^\infty \cap L^2.$
\end{rem}
\section{$L^\infty \times L^\infty  \rightarrow BMO_{\d}$ estimate}\label{Sec4}

Let us prove the long variation inequality first, namely, $\lv$ is bounded from $L^\infty \times L^\infty$ to $BMO_{\d}$
 for $f_1,f_2\in L^\infty_c$. 

For $f_i\in L_c^\infty$ and a dyadic cube $Q$ in $\R^d,$ we divide $f_i$ into the local and global parts:
$$f_i=f_i \mathds{1}_{Q^*} +f_i \1_{\R^d\backslash Q^*}=f_i^0+f_i^\infty,$$
where $Q^*=3\sqrt{d}\, Q.$ By the triangle inequality,
\begin{align*}
& \frac{1}{|Q|} \int_Q |\lv(f_1,f_2)-a_Q|  \\
 &\quad \le   \frac{1}{|Q|} \int_Q |\lv(f_1^0,f_2^0)| + \frac{1}{|Q|} \int_Q |\lv(f_1^0,f_2^\infty)| \\
  &\quad \qquad + \frac{1}{|Q|} \int_Q |\lv(f_1^\infty,f_2^0)| + \frac{1}{|Q|} \int_Q |\lv(f_1^\infty, f_2^\infty)-a_Q| \\
 &\quad =: I_1+I_2+I_3+I_4,
\end{align*}
where $a_Q$ is a constant depending on $Q$ which will be determined later.

Since the first three terms $I_1,\, I_2, \, I_3$ can be estimated in the same way, we just estimate $I_2$. Since $f_1^0\in  L^2$ and $f_2^\infty\in L^\infty\cap L^2,$ by the $L^\infty \times L^2\rightarrow L^2$ boundedness of $\lv$ (see Remark \ref{rem3.1}),
\begin{align*}
I_2& \le \left(  \frac{1}{|Q|} \int_Q |V_q(\A_{2^k}^G)(f_1^0, f_2^\infty)|^2 \right)^{\frac{1}{2}} \\
&\lesssim  \frac{1}{\sqrt{|Q|}} \|f_1^0\|_{L^{2}(Q^*)} \|f_2^\infty\|_{L^{\infty} (Q^*)} \le \|f_1\|_\infty
\|f_2\|_{\infty}.
\end{align*}

It remains to estimate $I_4.$ Choosing $a_Q=\lv(f_1^2, f_2^\infty)(c_Q)$, by triangle inequality in $\ell^q$ and the embedding
$\ell^q\hookrightarrow \ell^2,$
it suffices to show that for any $x\in Q,$
\begin{align}\label{new4.1}
\Big(\sum_{k\in \Z} |\A_{2^k}^G (f_1^\infty,f_2^\infty)(x) -\A_{2^k}^G (f_1^\infty,f_2^\infty)(c_Q)|^2 \Big)^{1/2}
 \lesssim \|f_1\|_\infty \|f_2\|_\infty,
\end{align}
where the implicit constant is independent of $x$ and $Q.$

If $2^k<\ell(Q),$ then for any $x\in Q$ and
any $(y,y')\in G_{2^k},$
\begin{align*}
|x+y-(c_Q)|\le |x-(c_Q)|+|y|<\sqrt{d}\, \ell(Q),
\end{align*}
which implies that
$(x,x)+G_{2^k}$ is contained in $Q^* \times Q^*$. Hence, by the support of $f_1^\infty$,
$$
\A_{2^k}^G (f_1^\infty,f_2^\infty)(x)=\A_{2^k}^G (f_1^\infty,f_2^\infty)(c_Q)=0
$$
for all $x\in Q.$

If $2^k\ge \ell(Q)$, then
\begin{align}\label{4.2}
\begin{split}
&|\A_{2^k}^G(f_1^\infty, f_2^\infty)(x)- \A_{2^k}^G(f_1^\infty, f_2^\infty)(c_Q) | \\
&=\frac{1}{| G_{2^k}|} \left|\int_{(x,x)+ G_{2^k}} f_1^\infty\otimes f_2^\infty-\int_{(c_Q,c_Q)+ G_{2^k}} f_1^\infty\otimes f_2^\infty\right|\\
& = \frac{1}{2^{2kd}}|\int_{\R^{2d}} (f_1^\infty\otimes f_2^\infty) (\1_{[(c_Q,c_Q)+ G_{2^k}] \backslash [(x,x)+ G_{2^k}]}-\1_{[(x,x)+ G_{2^k}]\backslash[(c_Q,c_Q)+G_{2^k}]})|\\
& \le  \frac{1}{2^{2kd}} \|f_1\|_{\infty} \|f_2\|_{\infty} \,  |[(c_Q,c_Q)+ G_{2^k}]\, \Delta\, [(x,x)+ G_{2^k}]|\\
&\lesssim \frac{\ell(Q)2^{k(2d-1)}}{2^{2kd}} \|f_1\|_{\infty} \|f_2\|_{\infty},
\end{split}
\end{align}
where $E\,\De\, F$ denotes the symmetric difference of two sets $E$ and $F.$ In the last inequality we used the observation that, by \eqref{1.1},
the measure of projection of the symmetric difference $[(c_Q,c_Q)+ G_{2^k}]\, \Delta\, [(x,x)+ G_{2^k}]$ to a hyperplane perpendicular to the vector $(x,x)-(c_Q,c_Q)$ is not more than the ``volume'' of $(2d-1)$-dimensional ball $B_{2d-1}(0,2^k)$, and therefore
$$
|[(c_Q,c_Q)+ G_{2^k}]\, \Delta\, [(x,x)+ G_{2^k}]|\lesssim  \ell(Q) \cdot 2^{k(2d-1)}.
$$
Summing over $\{k: 2^k\ge \ell(Q)\}$, we obtain
\begin{align*}
&\Big(\sum_{k: 2^k\ge \ell(Q)} |\A_{2^k}^G (f_1^\infty,f_2^\infty)(x) -(\A_{2^k}^G (f_1^\infty,f_2^\infty)(c_Q)|^2 \Big)^{1/2}\\
&\qquad\lesssim \sum_{k: 2^k\ge \ell(Q)} \frac{\ell(Q)}{2^{k}} \|f_1\|_{\infty} \|f_2\|_{\infty}\lesssim \|f_1\|_{\infty} \|f_2\|_{\infty},
\end{align*}
which concludes the proof of the long variation inequality.

Let us now treat the short variation operator. By arguments similar to that given in the long variation case, it suffices to show that,
for any $x\in Q$ and any increasing sequence $(t_i)_i,$
\begin{align*}
\sum_{2^k\ge \ell(Q)} \sum_{ I_i\in \ss_k} & | (\A_{t_{i+1}}^G- \A_{t_{i}}^G)(f_1^\infty, f_2^\infty)(x)
- ( \A_{t_{i+1}}^G - \A_{t_{i}}^G)(f_1^\infty, f_2^\infty)(c_Q)  |^2\lesssim \|f_1\|_\infty^2 \|f_2\|_\infty^2,
\end{align*}
where the implicit constant is independent of $Q$ and the sequence $(t_i)_i.$
By comparing $t_{i+1}-t_i$ with $\ell(Q)$, we divide the summation in $I_i\in \ss_k$ into two pieces according to whether $t_{i+1}-t_i$ is bigger or smaller than $\ell (Q).$

We consider the case $t_{i+1}-t_i> \ell (Q)$ first.
Since the family $(G_{t_i})_i$ of convex bodies are nested, we have that for any $z\in Q$,
\begin{align*}
&| \A_{t_{i+1}}^G(f_1^\infty, f_2^\infty)(z)- \A_{t_{i}}^G(f_1^\infty, f_2^\infty)(z) |\\
&= \left| \left(\frac{1}{| G_{t_{i+1}}|}-\frac{1}{| G_{t_i}|} \right)\int_{(z,z)+ G_{t_i}} f_1^\infty\otimes f_2^\infty+\frac{1}{|G_{t_{i+1}}|} \int_{(z,z)+[G_{t_{i+1}} \backslash G_{t_i}]} f_1^\infty\otimes f_2^\infty \right|\\
&\le  \left(\frac{1}{|G_{t_i}|}-\frac{1}{|G_{t_{i+1}}|} \right)\int_{(z,z)+ G_{t_i}} |f_1^\infty\otimes f_2^\infty|+\frac{1}{| G_{t_{i+1}}|} \int_{(z,z)+[G_{t_{i+1}} \backslash G_{t_i}]} |f_1^\infty\otimes f_2^\infty|\\
&\le  \left(\frac{1}{|G_{t_i}|}-\frac{1}{|G_{t_{i+1}}|} \right) |G_{t_i}| \|f_1\|_\infty \| f_2\|_\infty +\frac{1}{|G_{t_{i+1}}|} (|G_{t_{i+1}}| - |G_{t_i}|) \|f_1\|_\infty \| f_2\|_\infty\\
&= 2 (|G_{t_{i+1}}| - |G_{t_i}|) \frac{1}{|G_{t_{i+1}}|}  \|f_1\|_\infty \| f_2\|_\infty\\
&\lesssim \|f_1\|_\infty \| f_2\|_\infty \int_{|G_{t_i}|}^{|G_{t_{i+1}}|} \frac{1}{u}du.
\end{align*}
Applying the Cauchy-Schwarz inequality and using $t_{i+1}-t_i< \ell(Q)$ and $t_i\sim t_{i+1}$, we can continue the above estimates by
\begin{align*}
& \lesssim \|f_1\|_\infty \| f_2\|_\infty  (|G_{t_{i+1}}| - |G_{t_i}|)^{1/2}  \left(\int_{|G_{t_i}|}^{|G_{t_{i+1}}|} \frac{1}{u^2}du\right)^{1/2}\\
& \lesssim \|f_1\|_\infty \| f_2\|_\infty  \ell(Q)^{1/2} t_i^{d-1/2}  \left(\int_{|G_{t_i}|}^{|G_{t_{i+1}}|} \frac{1}{u^2}du\right)^{1/2}.
\end{align*}
Hence
\begin{align*}
&\bigg(\sum_{2^k\ge \ell (Q)} \sum_{\sub{I_i\in \ss_k\\ t_{i+1}-t_i <\ell (Q)}} | \A_{t_{i+1}}^G(f_1^\infty, f_2^\infty)(x)- \A_{t_i}^G(f_1^\infty, f_2^\infty)(x)|^2\bigg)^{1/2} \\
& \lesssim \|f_1\|_\infty \| f_2\|_\infty  \ell(Q)^{1/2} \left(\sum_{2^k\ge \ell(Q)} 2^{k(2d-1)}  \sum_{I_i\in \ss_k} \int_{|G_{t_i}|}^{|G_{t_{i+1}}|} \frac{1}{u^2}du\right)^{1/2}\\
&\le \|f_1\|_\infty \| f_2\|_\infty  \ell(Q)^{1/2} \left(\sum_{2^k\ge \ell(Q)} 2^{k(2d-1)}  \int_{(\tau 2^{k})^{2d}}^{2^{2(k+1)d}} \frac{1}{u^2}du\right)^{1/2}\\
&\le \|f_1\|_\infty \| f_2\|_\infty  \ell(Q)^{1/2} \left(\sum_{2^k\ge \ell(Q)} \tau^{-2d} 2^{-k}  \right)^{1/2}\\
&\lesssim \|f_1\|_\infty \| f_2\|_\infty,
\end{align*}
where $\tau$ is the constant in \eqref{1.1}.

It remains to treat the case $t_{i+1}-t_i\ge \ell (Q)$.
By an argument similar to \eqref{4.2}, we can get
$$
|\A_{t_{i}}^G (f_1^\infty, f_2^\infty)(x) - \A_{t_{i}}^G (f_1^\infty, f_2^2)(c_Q) |\lesssim |Q|^2 \|f_1\|_\infty \|f_2\|_\infty \frac{1}{|G_{t_i}|}.
$$
Since the number of intervals $I_i \in \ss_k$ satisfying $t_{i+1}-t_i \ge \ell(Q)$ is not more than $C2^k \slash \ell(Q),$
we conclude that
\begin{align*}
II_2 & =\bigg(\sum_{2^k\ge \ell (Q)} \sum_{\sub{I_i\in \ss_k\\ t_{i+1}-t_i \ge \ell (Q)}} | \A_{t_{i+1}}^G(f_1^\infty, f_2^\infty)(x)-\A_{t_{i+1}}^G(f_1^\infty, f_2^\infty)(c_Q)|^2\bigg)^{1/2}\\
&\lesssim |Q|^2 \|f_1\|_\infty \|f_2\|_\infty \bigg(\sum_{2^k \ge \ell(Q)} \sum_{\substack{I_i\in \ss_k \\ t_{i+1}-t_i \ge \ell(Q)}} \frac{1}{|G_{t_i}|^2} \bigg)^{1/2}\\
& \lesssim |Q|^2 \|f_1\|_\infty \|f_2\|_\infty \bigg(\sum_{2^k \ge \ell(Q)} \frac{2^k}{\ell(Q)} \frac{1}{2^{4kd}} \bigg)^{1/2}
 \lesssim \|f_1\|_\infty \|f_2\|_\infty.
\end{align*}
This completes the proof of the short variational inequality.
\section{$L^1 \times L^{p_2}\rightarrow L^{p,\infty}$ estimates, $1+{1}/{p_2}=1/p$} \label{Sec5}
This section is devoted to the proof of $L^1\times L^{p_2}\rightarrow L^{p,\infty}$ estimates for $1\le p_2\le \infty, 1/2\le p\le1$ with $1+1/p_2=1/p$. We distinguish two cases $1  \leq p_2<\infty$ and $p_2=\infty$, as the Calder\'{o}n-Zygmund decomposition does not hold for  functions in $L^\infty$. In the following two subsections, we treat the two cases $1 \leq  p_2<\infty$ and $p_2=\infty$ respectively.

\subsection{$L^1 \times L^{p_2}\rightarrow L^{p,\infty}$ estimates with $1\leq p_2<\infty$}
Fix an $\al$ and assume $f_1\in L^{1}$ and $f_2\in L^{p_2}$, both with compact supports.  Without loss of generality, we may assume $\|f_1\|_1=\|f_2\|_{p_2}=1$. Set $p_1=1$. For $i=1,2$, we perform a variant of Calder\'on-Zygmund decomposition given in \cite{GT} for $L^{p_i}$ functions $f_i$ at height $ \al^{p/p_i}$
to obtain ``good'' and ``bad'' functions $g_i$ and $b_i$, and families of cubes $\{Q_{i,j}\}_j$ with disjoint interiors such that
\begin{enumerate}
  \item[(i)]  $f_i=g_i+b_i,$
  \item[(ii)] $b_i=\sum_j b_{i,j}$,
  \item[(iii)] supp~$(b_{i,j})\subset Q_{i,j},$
   \item[(iv)] $\int b_{i,j}(x)\, dx=0,$
    \item[(v)] $\|b_{i,j}\|_{p_i}^{p_i} \le 2^{d+p_i}  \al^{p} |Q_{i,j}|,$
     \item[(vi)] $\bigcup_j |Q_{i,j}|\le C\al^{-p} \|f_i\|_{p_i}^{p_i},$
      \item[(vii)] $\|b_i\|_{p_i} \le 2^{\frac{d+p_i}{p_i}} \|f_i\|_{p_i},$
       \item[(viii)] $\|g_i\|_{p_i} \le  \|f_i\|_{p_i},$ $\|g_i\|_\infty\le 2^{\frac{d}{p_i}}\al^{\frac{p}{p_i}}.$
\end{enumerate}
Indeed, to show the above decomposition, one can imitate the same idea of classical Calder\'{o}n-Zygmund decomposition, but select a cube $Q$
when $(\frac{1}{|Q|}\int_Q |f_i|^{p_i})^{1/p_i}>\al^{p/p_i};$ the functions $g_i$ and $b_i$ are defined as in the classical case.
Now, let us begin to treat the long and short variation operators separately.
\subsubsection{Long variation estimates}\label{Sec5.1.1}
By the Calder\'{o}n-Zygmund decomposition mentioned above, we have
\begin{align*}
&|\{\lv(f_1,f_2)(x)>\al\}|  \\
&\quad\le  |\{\lv(g_1,g_2)(x)>\al/4\}| + |\{\lv(g_1, b_2)(x)>\al/4\}|\\
&\qquad+|\{\lv(b_1,g_2)(x)>\al/4\}|+|\{\lv(b_1,b_2)(x)>\al/4\}|,
\end{align*}
where $\lv$ represents the long variation operator defined in Section \ref{Sec3}.
Next we estimate each of these terms separately.

\noindent{\it Estimate for the first term.} We note that $g_2\in  L_c^\infty$ and
$\|g_1\|_2^2\le \|g_1\|_\infty \|g_1\|_1\lesssim \al^{p}$ by property (viii)  of the Calder\'on-Zygmund decomposition.
Now the $L^2 \times L^\infty \rightarrow L^2$ boundedness of $\lv$ gives
\begin{align*}
|\{\lv(g_1,g_2)>\al/4\}|  & \lesssim \al^{-2} \|\lv(g_1,g_2)\|_2^2\\
& \lesssim \al^{-2} \|g_1\|_2^2 \|g_2\|_\infty^2 \lesssim  \al^{-2} \cdot \al^{p} \cdot (\al^{p/p_2})^2=\al^{-p}.
\end{align*}


\noindent{\it Estimate for the second term.} By  the trivial pointwise estimate
\begin{equation*}
  \lv(g_1,b_2)(x)\le 2\Big(\sum_{k\in \Z} |\A_{2^k}^G(g_1,b_2)(x)|^2 \Big)^{1/2},
\end{equation*} it suffices to show
$$|\{x\in \R^d: (\sum_k |\A_{2^k}^G(g_1, b_2)(x)|^2)^{1/2} >\al/8\}| \lesssim \al^{-p}. $$

We use $Q_{i,j}$ (or $Q_{i,j'}$) to denote the cubes stemming from the Calder\'{o}n-Zygmund decomposition,
and $Q'_{i,l}$ to denote the cubes in $\mathscr{D}_{n}.$
Denote by $Q^*$ a certain dimensional dilate of a cube $Q$ in $\R^d$, and let
 $$\Omega=\bigcup_{j,\, j'} (Q_{1,j}\cup Q_{2,j'}),\quad\Omega^*=\bigcup_{j,\, j'} (Q_{1,j}^*\cup Q_{2,j'}^*).$$
By property (vi) of the Calder\'{o}n-Zygmund decomposition,
\begin{equation}\label{comple}
|\Omega^*|\le \sum_{j,\, j'} (|Q_{1,j}^*|+|Q_{2,j'}^*|) \lesssim \sum_{j,\,j'} (|Q_{1,j}|+|Q_{2,j'}|)\lesssim \al^{-p}.
\end{equation}
It thus suffices to show
\begin{equation}\label{new5.2}
|\{x\in \R^d\backslash \Omega^*: (\sum_k |\A_{2^k}^G(g_1, b_2)(x)|^2)^{1/2} >\al/8\}|\lesssim \al^{-p}.
\end{equation}
By Chebyshev's inequality, we have
\begin{align*}
& |\{x\in \R^d\backslash \Omega^*: (\sum_k |\A_{2^k}^G(g_1, b_2)(x)|^2)^{1/2} >\al/8\}|
 \lesssim \al^{-2} \int_{\R^d \backslash \Omega^*} \sum_k |\A_{2^k}^G(g_1, b_2)(x)|^2\, dx.
\end{align*}

For each $n\in \Z$ and $i=1,2$, let $\Q_n^{i}$ be the collection of cubes $Q_{i,j}$ of side length $2^n.$ Define
$$h_{i,n}=\sum_{j: \,Q_{i,j}\in \Q_n^i} b_{i,j}, \quad i=1,2.$$
Then
$$
b_i=\sum_{n\in \Z}\, \sum_{j:\, Q_{i,j} \in \Q_n^i} b_{i,j} =\sum_{n\in \Z} h_{i,n}.
$$

Notice that $\A_{2^k}^G(g_1, h_{2,n})(x)=0$ if $n\ge k$ since $(x,x)+ G_{2^k}$  is disjoint from the strip $\R^d\times Q_{2,j}$ for any $Q_{2,j}$ contained in the support of $h_{2,n}.$
It thus suffices to show
\begin{equation} \label{eq6.1}
 \int_{\R^d \backslash \Omega^*} \sum_k |\A_{2^k}^G(g_1, \sum_{n<k} h_{2,n})(x)|^2\, dx \lesssim \al^{2-p}.
\end{equation}
Let $\1_{i,n}=\sum_{Q_{i,j}\in \Q_n^i} \, \1_{Q_{i,j}}, i=1,2$.
Then \eqref{eq6.1} will follow if we show that
\begin{equation}\label{6.2}
|\A_{2^k}^G(g_1,  h_{2,n})(x)|^2\lesssim 2^{n-k} \cdot \al^{2-p} \A_{c2^{k}}^G(|g_1|, |\1_{2,n}|)(x), \qquad \forall x\in \R^d, n<k,
\end{equation}
where $c=\sqrt{d}+1.$
Indeed, integrating both sides of \eqref{6.2}, applying Minkowski's inequality and properties (vi) and (viii) of Calder\'on-Zygmund decomposition, we get
\begin{align}\label{5.10}
\begin{split}
\|\A_{2^k}^G(g_1,  h_{2,n})\|_{2}^2 &\lesssim 2^{n-k} \cdot \al^{2-p} \|\A_{c2^{k}}^G(|g_1|, |\1_{2,n}|)\|_1\\
 & \le 2^{n-k} \cdot \al^{2-p} \frac{1}{|G_{c2^k}|}\int_{G_{c2^{k}}}   \|g_1(\cdot+y_1)\1_{2,n}(\cdot+y_2)\|_{1}\, dy_1dy_2 \\
 & {\le 2^{n-k} \cdot \al^{2-p}   \|g_1\|_{\infty} \|\1_{2,n}\|_{1}.}
\end{split}
\end{align}
{Applying Lemma \ref{almost}, (ii) with $\S_k=\A_{2^k}^G,$ $\sigma(j)=2^{-\frac{|j|}{2}}$, $u_n=g_1$, $v_n=h_{2,n}$ and $a(u_n,v_n)=(\al^{2-p}   \|u_n\|_{\infty} \|v_{n}\|_{1})^{1/2}$}, the fact that the cubes in $\Q_n^2$ are pairwise disjoint,
and property (vi) of the Calder\'{o}n-Zygmund decomposition, we conclude that
\begin{align*}
{\text{LHS of }\eqref{eq6.1}\le \al^{2-p}   \|g_1\|_{\infty} \sum_{n\in \Z}\|\1_{2,n}\|_{1}
\lesssim  \al^{2}   \sum_{n\in \Z}\, \sum_{j:\,Q_{2,j}\in \Q_n^2} |Q_{2,j}| \lesssim \al^{2-p}.}
\end{align*}

To finish the estimate for the second term, it remains to show \eqref{6.2}.
Assume $n<k.$ It follows from property (v) of Calder\'on-Zygmund decomposition and H\"older's inequality that
\begin{equation}\label{new5.4}
\|b_{2,j}\|_{L^1(Q_{2,j})}\le \|b_{2,j}\|_{L^{p_2}} |Q_{2,j}|^{1-\frac{1}{p_2}} \lesssim \al^{\frac{p}{p_2}}|Q_{2,j}|.
\end{equation}
Write the support of $g_1\otimes h_{2,n}$ as
$$
\R^d\times \Big(\bigcup_{j:Q_{2,j}\in \Q_n^2} Q_{2,j}\Big)
=\bigcup_{Q_{1,l}'\times Q_{2,j} \in \mathscr{D}_{n}\times \Q_n^2} Q_{1,l}'\times Q_{2,j}
=:\bigcup_{\widetilde{Q}_{l,j}\in \mathscr{D}_{n}\times \Q_n^2} \widetilde Q_{l,j},
$$
where $\widetilde Q_{l,j}=Q_{1,l}'\times Q_{2,j},$
which leads to the decomposition
\begin{align*}
&\int_{(x,x)+ G_{2^k}} g_1(y_1) h_{2,n}(y_2)\, dy_1dy_2\\
&\qquad =\sum_{\widetilde Q_{l,j} \in \mathscr{D}_{n}\times \Q_n^2 }\int_{[(x,x)+ G_{2^k}]\, \cap\, \widetilde Q_{l,j}} g_1(y_1) b_{2,j}(y_2)\, dy_1dy_2.
\end{align*}
Since each $b_{2,j}$ has mean zero, the integral on the right hand side is nonzero only if $\widetilde Q_{l,j}$ intersects with the boundary of the ball $(x,x)+ G_{2^k}.$ Denote by $\I(G_{2^{k}})$ the collection of such cubes $\widetilde Q_{l,j}$, i.e.,
 $$\I(\partial G_{2^{k}})=\{\widetilde Q_{l,j}\in \mathscr{D}_{n}\times \Q_n^2:\, [(x,x)+ \partial G_{2^k}]\cap \widetilde Q_{l,j}\not= \varnothing\}.$$
For $n<k,$ the union of the cubes $\widetilde Q_{l,j}\in \I(\partial G_{2^{k}})$
is contained in $(x,x)+ G_{c2^{k}}$ for  $c=\sqrt{d}+1$. Hence
\begin{align}\label{new6.4}
\begin{split}
&\left|\int_{(x,x)+ G_{2^k}} g_1(y_1) h_{2,n}(y_2)\, dy_1dy_2\right|\\
&\le \sum_{ \widetilde Q_{l,j} \in \I(\partial G_{2^{k}})}
\left(\int_{Q_{1,l}'} |g_1(y_1)|\, dy_1\right)\cdot \left(\int_{Q_{2,j}} |b_{2,j}(y_2)|\, dy_2\right)\\
&\lesssim  \sum_{ \widetilde Q_{l,j} \in \I(\partial G_{2^{k}})}
\left(\int_{Q_{1,l}'} |g_1(y_1)|\, dy_1\right)\cdot (\al^{\frac{p}{p_2}} |Q_{2,j}|)\\
&= \al^{\frac{p}{p_2}} \sum_{ \widetilde Q_{l,j} \in \I(\partial G_{2^{k}})}
\int_{\widetilde Q_{l,j}} |g_1(y_1)|\1_{Q_{2,j}}(y_2)\, dy_1dy_2\\
&\le \al^{\frac{p}{p_2}}  \int_{(x,x)+G_{c2^{k}}} |g_1(y_1)|\1_{2,n}(y_2) \, dy_1dy_2,
\end{split}
\end{align}
where in the second inequality we used \eqref{new5.4}. Therefore,
\begin{equation}\label{new5.6}
  |\A_{2^k}^G( g_1, h_{2,n})(x)|\lesssim \al^{\frac{p}{p_2}}   \A_{c2^{k}}^G( |g_1|, \1_{2,n})(x).
\end{equation}

On the other hand, using $\|g_1\|_\infty \lesssim \al^{p}$ we have
\begin{align}\label{new5.7}
\begin{split}
&\left|\A_{2^k}^G( g_1, h_{2,n})(x)\right|\\
&\le \frac{1}{|G_{2^k}|} \sum_{ \widetilde Q_{l,j} \in \I(\partial G_{2^{k}})}
\left(\int_{Q_{1,l}'} |g_1(y_1)|dy_1\right)\cdot \left(\int_{Q_{2,j}} |h_{2,n}(y_2)|dy_2\right)\\
&\lesssim \al\cdot \frac{1}{|G_{2^k}|} \sum_{ \widetilde Q_{l,j} \in \I(\partial G_{2^{k}})} |\widetilde Q_j|\lesssim \al\cdot \frac{2^n 2^{(2d-1)k} }{2^{2kd}}=2^{n-k} \, \al.
\end{split}
\end{align}
Combining \eqref{new5.6} with \eqref{new5.7}, we obtain \eqref{6.2}. This completes all estimates for the second term.

\noindent{\it Estimate for the third term}.
The estimate of the third term is similar to that of the second one, the details being omitted.

\noindent{\it Estimate for the last term}. We write
$$b_1=\sum_{m\in \Z} \sum_{Q_{1,i}\in \Q_m^1} b_{1,i}=:\sum_{m\in \Z} h_{1,m},$$
and similarly for $b_2.$ Define
$$\widetilde h_{i,n}=\sum_{m\le n} h_{i,m}, \qquad \widetilde{\1}_{i,n}=\sum_{m\le n} \1_{i,m},\quad i=1,2.$$
As above, 
$\A_{2^k}^G(h_{1,m},h_{2,n})(x)=0$ if $k\le m$ or $k\le n$
since $(x,x)+ G_{2^k}$ is disjoint from any of $Q_{1,j}\times Q_{2,j'}$ in the support of $h_{1,m} \otimes h_{2,n}.$
These discussions together with Chebyshev's inequality yields
\begin{align*}
& |\{x\in \R^d \backslash \Omega^*: V_q(\A_{2^k}^G(b_1,b_2)(x):\, k\in \Z)>\al/8\}|\\
 &\lesssim \al^2 \int_{\R^d \backslash \Omega^*} \sum_k |\A_{2^k}^G(\sum_{m<k} h_{1,m}, \sum_{n<k} h_{2,n})(x)|^2\, dx\\
  &\lesssim \al^2 \int_{\R^d \backslash \Omega^*} \sum_k  |\sum_{n<k}\A_{2^k}^G(\sum_{m\le n} h_{1,m},  h_{2,n})(x) |^2\, dx\\
   &\qquad+ \al^2 \int_{\R^d \backslash \Omega^*} \sum_k  |\sum_{m<k}\A_{2^k}^G( h_{1,m}, \sum_{n<m} h_{2,n})(x) |^2\, dx\\
&= \al^2 \int_{\R^d \backslash \Omega^*} \sum_k  |\sum_{n<k}\A_{2^k}^G(\widetilde h_{1,n},  h_{2,n})(x) |^2\, dx\\
   &\qquad+ \al^2 \int_{\R^d \backslash \Omega^*} \sum_k  |\sum_{m<k}\A_{2^k}^G( h_{1,m}, \widetilde h_{2,m-1})(x) |^2\, dx.
\end{align*}
Since these two terms can be handled in the same way, we only treat the first. We need to show
 \begin{equation} \label{eq6.4}
\sum_{k\in \Z} \int_{\R^d \backslash \Omega^*}  |\sum_{n<k} \A_{2^k}^G(\widetilde h_{1,n},  h_{2,n})(x)|^2\, dx \lesssim \al^{2-p}.
\end{equation}

We claim that there exists a constant $c>1$ such that
\begin{equation}\label{claim5.11}
|\A_{2^k}^G(\widetilde h_{1,n},  h_{2,n})(x)|^2\lesssim 2^{n-k} \cdot \al^{2} \A_{c2^{k}}^G(\widetilde \1_{1,n}, \1_{2,n})(x).
\end{equation}
Assume the claim for the moment. Let us prove \eqref{eq6.4}.
Integrating both sides and using the trivial estimate $\widetilde{\1}_{1,n}\le 1$ yield
\begin{align}
\begin{split}
\|\A_{2^k}^G(\widetilde h_{1,n},  h_{2,n})\|_{2}^2 & \lesssim 2^{n-k} \cdot \al^{2} \|\A_{c2^{k}}^G(|\widetilde d_{1,n}|, |d_{2,n}|)\|_1\\
& \le 2^{n-k} \cdot \al^{2} \frac{1}{|G_{c2^{k}}|}\int_{G_{c2^{k}}}
 \sum_{Q_{2,j}\in \Q_n^2} |Q_{2,j}| \, dy_1dy_2 \\
&=  2^{n-k} \cdot \al^{2}\sum_{Q_{2,j}\in \Q_n^2} |Q_{2,j}|.
\end{split}
\end{align}
Then \eqref{eq6.4} follows by utilizing Lemma \ref{almost}, (i) with {$\S_k=\A_{2^k}^G$, $\sigma(j)=2^{-\frac{|j|}{2}}$ and $a(u_n,v_n)=\al (\sum_{Q_{2,j}\in \Q_n^2} |Q_{2,j}|)^{1/2}$}.

Finally, let us prove \eqref{claim5.11}. Noting that $n<k$ and using property (vii) of Calder\'on-Zygmund decomposition $\|b_{1,j}\|_{L^1(Q_{1,j})} \lesssim \al^{p}|Q_{1,j}|$ and $\|b_{2,j}\|_{L^1(Q_{2,j})} \lesssim \al^{\frac{p}{p_2}}|Q_{2,j}|$, we have
\begin{align*}
&\Big|\int_{(x,x)+ G_{2^k}} \widetilde h_{1,n}(y_1) h_{2,n}(y_2)\,dy_1dy_2\Big|\\
&\le \sum_{ \widetilde Q_{l,j} \in \I(\partial G_{2^{k}})}   \int_{\wt{Q}_{l,j}} |\widetilde h_{1,n}(y_1) b_{2,j}(y_2)|\,dy_1dy_2\\
&\le \sum_{ \widetilde Q_{l,j} \in \I(\partial G_{2^{k}})}
\left(\int_{Q_{1,l}'} |\widetilde h_{1,n}(y_1)|\,dy_1\right)\cdot \left(\int_{Q_{2,j}} |b_{2,j}(y_2)|\,dy_2\right)\\
&\lesssim \al \sum_{ \widetilde Q_{l,j} \in \I(\partial G_{2^{k}})}
 \bigg| Q_{1,l}'\cap \Big(\bigcup_{m\le n}\bigcup_{Q_{1,j'}\in \Q_m^1}Q_{1,j'}\Big)\bigg|\cdot |Q_{2,j}|\\
&= \al \sum_{ \widetilde Q_{l,j} \in \I(\partial G_{2^{k}})}
\int_{\widetilde Q_{l,j}} |\widetilde \1_{1,n}(y_1) \1_{2,n}(y_2)|\, dy_1dy_2\\
&\le \al  \int_{(x,x)+G_{c2^{k}}} |\widetilde \1_{1,n}(y_1) \1_{2,n}(y_2)|\, dy_1dy_2,
\end{align*}
where $\widetilde Q_{l,j}=Q_{1,l}'\times Q_{2,j}$.

We also have
\begin{align*}
&\frac{1}{|G_{2^k}|}\left|\int_{(x,x)+ G_{2^k}} \widetilde h_{1,n}(y_1) h_{2,n}(y_2)dy_1dy_2\right|\\
&\lesssim  \frac{\al}{|G_{2^k}|} \sum_{ \widetilde Q_{l,j} \in \I(\partial G_{2^{k}})}
 \bigg| Q_{1,l}'\cap \Big(\bigcup_{m\le n}\bigcup_{Q_{1,j'}\in \Q_m^1}Q_{1,j'}\Big)\bigg|\cdot |Q_{2,j}|\\
&\le \frac{\al}{|G_{2^k}|} \sum_{ \widetilde Q_{l,j} \in \I(\partial G_{2^{k}})}
|\widetilde Q_{l,j}|  \lesssim \al \cdot \frac{2^n 2^{(2d-1)k} }{2^{2kd}}=2^{n-k} \, \al.
\end{align*}
Combining the above two estimates yields \eqref{claim5.11}, and hence concluding the proof of $L^1\times L^{p_2}\rightarrow L^{p,\infty}$ long variation estimate.

\subsubsection{Short variation estimates}
By the triangle inequality,
\begin{align*}
& \Big|\Big\{\svq(f_1,f_2)>\al\Big\}\Big| \\
  &\quad \le \Big|\Big\{\svq(g_1,g_2) >\al/4\Big\}\Big|  + \Big|\Big\{\svq(g_1, b_2)>\al/4\Big\}\Big| \\
  &\qquad + \Big|\Big\{\svq(b_1,g_2)>\al/4\Big\}\Big|  + \Big|\Big\{\svq(b_1,b_2)>\al/4\Big\}\Big|.
\end{align*}
The first term can be treated using the $L^2\times L^\infty \rightarrow L^2$ estimate as in the long variation case.

For the second term, by an argument similar to that used in the proof of long variation estimates {for the second term}, we only need to show
\begin{equation}\label{5.12}
\sum_{k\in \Z} \int_{\R^d \backslash \Omega^*}  \sum_{I_i\in \ss_k} |\sum_{n<k} [\A_{t_{i+1}}^G(g_1,  h_{2,n})-\A_{t_{i}}^G(g_1, h_{2,n})]|^2 \lesssim \al^{2-p}.
\end{equation}
{Recalling that the bi-subadditive operator ${\g}_k$ was defined by
$$\g_k(f_1,f_2)(x)=\Big(\sum_{I_i\in \ss_k} |\A_{t_{i+1}}^G(f_1,f_2)(x)-\A_{t_{i}}^G(f_1,f_2)(x)|^2 \Big)^{1/2},$$
by Minkowski's inequality for series, it suffices to show
\begin{equation}\label{}
\sum_{k\in \Z}   \|\sum_{n<k} \g_k(g_1,  h_{2,n})\|_{L^2(\R^d \backslash \Omega^*)}^2 \lesssim \al^{2-p}.
\end{equation}
Applying Lemma \ref{almost}, (i) and using the simple estimate
$$\|\A_{c2^{k+1}}^G(|g_1|, \1_{2,n})\|_1 \lesssim \|g_1\|_\infty \|\1_{2,n}\|_1,$$
matters are reduced to showing the pointwise estimate
\begin{equation}\label{6.6}
\g_k(g_1,  h_{2,n})(x)^2 \lesssim \al^{2-p} 2^{n-k} \A_{c2^{k+1}}^G(|g_1|, \1_{2,n})(x),\qquad \forall~ x\in \R^d.
\end{equation}
}
Since the left hand side is majorized by a constant multiple of $J_1+J_2$, where
\begin{align*}
J_1 & = \sum_{I_i\in \ss_k} \bigg|\frac{1}{|G_{t_{i+1}}|} \int_{(x,x)+(G_{t_{i+1}}\backslash G_{t_i})} (g_1\otimes  h_{2,n})\bigg|^2\\
\intertext{and}
J_2 & =\sum_{I_i\in \ss_k}\left (\frac{1}{|G_{t_i}|} -\frac{1}{|G_{t_{i+1}}|} \right)^2  \bigg|\int_{(x,x)+ G_{t_i}} (g_1\otimes  h_{2,n})\bigg|^2,
\end{align*}
it suffices to bound $J_1$ and $J_2$ by the right hand side of \eqref{6.6}.

We treat first $J_2$ which is easier.
Denote
$$
\I(\partial G_{t_i})= \{\widetilde Q_{l,j}=Q_{1,l}'\times Q_{2,j}\in \mathscr{D}_{n}\times\Q_n^2: [(x,x)+\partial G_{t_i}]\cap \widetilde Q_{l,j}\not=\varnothing\}.
$$
By similar arguments to that in \eqref{new6.4}, we have
\begin{align*}
\sqrt{J_{2}}&\lesssim  \bigg( \sum_{I_i\in \ss_k}(\frac{1}{|G_{t_i}|} -\frac{1}{|G_{t_{i+1}}|} )\bigg
)   \sup_{I_i\in \ss_k}\left|\int_{(x,x)+ G_{t_i}} (g_1\otimes  h_{2,n})\right|\\
&\lesssim   \frac{1}{2^{2kd}} \sup_{I_i\in \ss_k} \sum_{ \widetilde Q_{l,j}\in \I(\partial G_{t_i}) }
\left(\int_{Q_{1,l}'} |g_1(y_1)|\, dy_1\right)\cdot \left(\int_{Q_{2,j}} |b_{2,j}(y_2)|\, dy_2\right) \\
&\lesssim   \frac{1}{2^{2kd}}  \sup_{I_i\in \ss_k} \sum_{ \widetilde Q_{l,j}\in \I(\partial G_{t_i}) }
\left(\int_{Q_{1,l}'} |g_1(y_1)|\, dy_1\right) \cdot (\al^{\frac{p}{p_2}} |Q_{2,j}|) \\
&\lesssim  \frac{1}{2^{2kd}} \al^{\frac{p}{p_2}}  \sup_{I_i\in \ss_k} \sum_{ \widetilde Q_{l,j}\in \I(\partial G_{t_i}) } \int_{\widetilde Q_{l,j}} |g_1(y_1) \1_{2,n}(y_2)|\, dy_1 dy_2 \\
&\lesssim \al^{\frac{p}{p_2}} \cdot  \A_{c2^{k+1}}^G (|g_1|,\1_{2,n})(x),
\end{align*}
and
\begin{align*}
\sqrt{J_{2}} &\lesssim   \frac{1}{2^{2kd}} \sup_{I_i\in \ss_k} \sum_{ \widetilde Q_{l,j}\in \I(\partial G_{t_i}) }
\Big(\int_{Q_{1,l}'} |g_1(y_1)|\, dy_1\Big)\cdot \Big(\int_{Q_{2,j}} |b_{2,j}(y_2)|\, dy_2\Big) \\
&\lesssim   \frac{1}{2^{2kd}}   \sup_{I_i\in \ss_k} \sum_{ \widetilde Q_{l,j}\in \I(\partial G_{t_i}) }
\left(\al^p |Q_{1,l}'|\right) \cdot (\al^{\frac{p}{p_2}} |Q_{2,j}|) \\
&=\al \cdot  \frac{1}{2^{2kd}}  \sup_{I_i\in \ss_k}  \sum_{\widetilde Q_{l,j}\in \I(\partial G_{t_i})} |\widetilde Q_{l,j}| \lesssim  \al \cdot  \frac{2^{k(2d-1)}\cdot 2^n}{2^{2kd}}= \al 2^{n-k}.
\end{align*}
Combining the above two estimates, we obtain \eqref{6.6}.

For $J_1$, we have
\begin{align*}
&\sup_{I_i\in \ss_k}\bigg|\frac{1}{|G_{t_{i+1}}|} \int_{(x,x)+(G_{t_{i+1}}\backslash G_{t_i})} g_1(y_1) h_{2,n}(y_2)\, dy_1dy_2\bigg|\\
&\le \frac{1}{2^{2kd}} \sup_{I_i\in \ss_k} \sum_{\widetilde Q_{l,j} \in \I(\partial(G_{t_{i+1}}\backslash G_{t_i})) }
\bigg(\int_{Q_{1,l}} |g_1(y_1)|\, dy_1\bigg)\cdot \bigg(\int_{Q_{2,j}} |h_{2,n}(y_2)|\, dy_2 \bigg)\\
&\lesssim  \frac{1}{2^{2kd}} \sup_{I_i\in \ss_k} \sum_{\widetilde Q_{l,j} \in \I(\partial(G_{t_{i+1}}\backslash G_{t_i}))} \left(\al^p |Q_{1,l}| \right)\cdot (\al^{\frac{p}{p_2}} |Q_{2,j}|)\\
&\lesssim \al\cdot \frac{2^n 2^{(2d-1)k}}{2^{2kd}}=\al\cdot 2^{n-k}.
\end{align*}

On the other hand,
\begin{align*}
&\sum_{I_i\in \ss_k} \left|\frac{1}{|G_{t_{i+1}}|}\int_{(x,x)+(G_{t_{i+1}}\backslash G_{t_i})} g_1(y_1) h_{2,n}(y_2)\, dy_1dy_2\right|\\
&\lesssim  \frac{1}{2^{2kd}}   \sum_{I_i\in \ss_k} \int_{(x,x)+(G_{t_{i+1}}\backslash G_{t_i})} |g_1(y_1) h_{2,n}(y_2)|\, dy_1dy_2\\
&\le \frac{1}{2^{2kd}}   \int_{(x,x)+G_{2^{k+1}}} |g_1(y_1) h_{2,n}(y_2)|\, dy_1dy_2\\
&\le \frac{1}{2^{2kd}}\sum_{ \widetilde Q_{l,l} \in \I(G_{2^{k+1}}) }
\left(\int_{Q_{1,j}'} |g_1(y_1)|\, dy_1\right)\cdot \left(\int_{Q_{2,j}} |h_{2,n}(y_2)|\, dy_2\right)\\
&\lesssim \al^{\frac{p}{p_2}} \frac{1}{2^{2kd}} \sum_{ \widetilde Q_{l,j} \in \I(G_{2^{k+1}}) }
\left(\int_{Q_{1,j}'} |g_1(y_1)|\, dy_1\right)\cdot |Q_{2,j}|\\
&= \al^{\frac{p}{p_2}} \frac{1}{2^{2kd}}\sum_{ \widetilde Q_{l,j} \in \I(G_{2^{k+1}}) }
\int_{\widetilde Q_{l,j}} |g_1(y_1) \1_{2,n}(y_2)|\, dy_1dy_2\\
&\le \al^{\frac{p}{p_2}}  \frac{1}{2^{2kd}} \int_{ G_{c2^{k+1}}} |g_1(y_1)\1_{2,n}(y_2)|\, dy_1dy_2\\
& \approx \al^{\frac{p}{p_2}}  \A_{c2^{k+1}}^G(|g_1|, \1_{2,n})(x).
\end{align*}
Combining these two inequalities, we obtain the desired bound for $J_1.$
This completes the estimate of the second term. The other terms can be similarly treated, the details being omitted.


\subsection{$L^1\times L^\infty \rightarrow L^{1,\infty}$ estimate}\label{Sec5.3}
Since the proof is similar to that in the case $1<p_2<\infty$,
we only sketch the proof of the long variation estimate.
Assume that $f_1\in L^1(\R^d)$ and $f_2\in L^\infty_c(\R^d)$ with $\|f_1\|_1=\|f_2\|_\infty=1.$
We need to show
$$|\{\lv(f_1,f_2)>\al\}| \lesssim \al^{-1}.$$
We adopt the same notations as used in previous subsection.
By the Calder\'{o}n-Zygmund decomposition $f_1=g_1+b_1$, we have
\begin{align*}
|\{\lv(f_1,f_2)>\al\}|  \le & |\{\lv(g_1,f_2)>\al/2\}| + |\{\lv(b_1, f_2)>\al/2\}|.
\end{align*}

 By the $L^2 \times L^\infty \rightarrow L^2$ boundedness of $\lv$
and the estimate $\|g_1\|_2^2\le \|g_1\|_\infty \|g_1\|_1\lesssim \al$, we have
\begin{align*}
|\{\lv(g_1,f_2)>\al/2\}|  & \lesssim \al^{-2} \|\lv(g_1,f_2)\|_2^2\\
& \lesssim \al^{-2} \|g_1\|_2^2 \|f_2\|_\infty^2 \lesssim  \al^{-2} \cdot \al=\al^{-1}.
\end{align*}

For the second term, by an argument similar to that used in Section \ref{Sec5.1.1}, we can reduce the matters to showing
\begin{equation} \label{eq7.1}
\sum_{k\in \Z} \int_{\R^d \backslash \Omega^*}  |\A_{2^k}^G(\sum_{n<k} h_{1,n}, f_2)(x)|^2\, dx \lesssim \al.
\end{equation}
Assuming $n<k,$ we claim that
\begin{equation}\label{7.2}
|\A_{2^k}^G(h_{1,n}, f_2)(x)|^2\lesssim 2^{n-k} \cdot \al^{2} \A_{c2^{k}}^G(\1_{1,n}, |f_2|)(x),\quad \forall \, x\in \R^d.
\end{equation}
Assume this claim holds for the moment, let us show \eqref{eq7.1}.
Integrating both sides and using the hypothesis that $\|f_2\|_\infty=1$, we get
\begin{align*}
\|\A_{2^k}^G( h_{1,n}, f_2)\|_{2}^2  & \lesssim 2^{n-k} \cdot \al^{2} \|\A_{c2^{k}}^G(\1_{1,n}, |f_2|)\|_1 \lesssim 2^{n-k} \cdot \al^{2}   \sum_{j: \, Q_{1,j}\in \Q_n^1} | Q_{1,j}|.
\end{align*}
Then \eqref{eq7.1} follows by applying Lemma \ref{almost} with $\S_k=\A_{2^k}^G$, $\sigma(j)=2^{-\frac{|j|}{2}}$, and {$a(h_{1,n}, f_2)= \al  ( \sum_{j: \, Q_{1,j}\in \Q_n^1} | Q_{1,j}|)^{1/2}$.}

It suffices to establish claim \eqref{7.2}. By  arguments similar to those given in \eqref{new6.4} and \eqref{new5.7} (with the good function $g_2$ replaced by the function $f_2$ itself in the current case), one can derive that
\begin{align*}
\frac{1}{|G_{2^k}|} \left|\int_{(x,x)+ G_{2^k}} h_{1,n}(y_1) f_2(y_2) \, dy_1dy_2\right| & \lesssim \frac{ \al }{|G_{2^k}|} \int_{(x,x)+G_{c2^{k}}} \1_{1,n}(y_1) |f_2(y_2)| \, dy_1dy_2 \\
\intertext{and}
 \frac{1}{|G_{2^k}|} \left|\int_{(x,x)+ G_{2^k}} h_{1,n}(y_1) f_2(y_2)\, dy_1dy_2\right| & \lesssim  2^{n-k} \cdot \al.
\end{align*}
Combining these two estimates yields \eqref{7.2}, and hence concludes the proof of long variation inequality.

The short variation estimate can be proved analogously, the details being omitted.
\section{Restricted weak type estimates and concluding the proof of Theorem \ref{thm:main1}}\label{Sec6}
In this section, we will conclude our proof of Theorem \ref{thm:main1} by multilinear real interpolation of Grafakos-Kalton \cite{GK} based on the previous estimates. To begin with, let us recall some definitions.

Denote by $S(\R^d)$ the space of functions of the form $\sum_{i=1}^N c_i \1_{E_i},$
where each measurable subset $E_i$ of $X$ has finite measure.
An operator $T$ defined on $S(\R^d)\times \cdots \times S(\R^d)$ and taking values in the set of complex-valued measurable functions on
$\R^d$ is called multisublinear if for all $1\le j\le m,$ all $f_j,g_j$ in $S(\R^d)$ and all $\la\in \Bbb C$ the statements
$$|T(f_1,\ldots, \la f_j,\ldots, f_m)|=|\la|\, |T(f_1,\ldots, f_j, \ldots, f_m)|, $$
$$|T(\ldots, f_j+g_j,\ldots)|\le |T(\ldots,f_j,\ldots)|+|T(\ldots,g_j,\ldots)|$$
hold almost everywhere.

\begin{defn}
Let $0<p_1,p_2,\cdots, p_m, p\le \infty.$ We say that an $m$-sublinear operator $T$ is of \textit{restricted weak type $(p_1,\ldots, p_m,p)$}
if there is a constant $C=C(p_1,\ldots, p_m, p)$ such that for all measurable subsets $A_1,\ldots, A_m$ of finite measure we have
$$\sup_{\la>0}\la |\{x\in \R^d:\, |T(\1_{A_1},\ldots, \1_{A_m})|>\la\}|^{\frac{1}{p}} \le C|A_1|^{\frac{1}{p_1}}\cdots |A_m|^{\frac{1}{p_m}}$$
when $p<\infty$ and
$$\|T(\1_{A_1},\ldots, \1_{A_m})\|_\infty \le C|A_1|^{\frac{1}{p_1}}\cdots |A_m|^{\frac{1}{p_m}}$$
when $p=\infty.$
\end{defn}

\noindent\textit{Proof of Theorem \ref{thm:main1}.} In order to prove the main result by using multilinear real interpolation (cf. \cite{GK}), we need to show that
$V_q (\A_{t}^G:\, t>0)$ is of restricted weak types $(1,\infty,1)$, $(\infty, 1, 1)$,  $(1,1,1/2),$  $(\infty, s, s)$ and $(s,\infty,  s)$, for any $2<s<\infty$.
The first three follows from the corresponding estimates obtained in previous sections by taking $f_1=\1_A$ and $f_2=\1_B$. It remains to show the
restricted weak types $(\infty, s, s)$ and $(s,\infty, s)$ for any $2<s<\infty$.

 For any fixed $f_1\in L^\infty\cap L^2,$ define a sublinear operator $T_{f_1}$ by
$$T_{f_1}(g)(x)=V_q(\A_t^G(f_1,g):\, t>0)(x), \quad \forall \, g\in L^{p}.
$$
From the boundedness results of $V_q(\A_t^G:\, t>0)$ proved in previous sections,  it follows that $T_{f_1}$ is bounded from $L^2$ to $L^2$, from $L^\infty_c$ to $BMO_\d$, and from $L^1$ to $L^{1,\infty}$, and the operator norms
are both bounded by a constant multiple of $\|f_1\|_\infty.$
By classical Marcinkiewicz interpolation for sublinear operators, $T_{f_1}$ is bounded on $L^s$ for all $1<s<\infty,$
with operator norm at most $C_{p,d}\|f_1\|_\infty.$ This implies that
$$\|V_q(\A_t^G(f_1,f_2):\, t>0)\|_{s}=\|T_{f_1}(f_2)\|_{s} \lesssim \|f_1\|_\infty \,\|f_2\|_s$$
 for any $1<s<\infty$, any $f_1\in L^\infty\cap L^2, \, f_2\in L^s.$
 Applying this with $f_1=\1_A$ and $f_2=\1_B$, we see that $V_q(\A_t^G:\, t>0)$ is of restricted weak type $(\infty, s, s)$
 for any $1<s<\infty.$ By symmetry, $V_q(\A_t^G:\, t>0)$ is of restricted weak type $(s,\infty, s)$ for any $1<s<\infty.$

 Finally, for $1<p_1,p_2<\infty$ and $1/p=1/p_1+1/p_2$,
  $(p_1,p_2,p)$ lies in the convex hull of $(1,\infty,1)$, $(\infty, 1, 1)$,  $(1,1,1/2),$
 $(\infty, s, s)$ and $(s,\infty,  s)$ for sufficiently large constant $s$. The required $L^{p_1}\times L^{p_2}\rightarrow L^p$ bound then follows by applying the multilinear real interpolation
 in Lemma \ref{interpolation}. This concludes the proof of Theorem \ref{thm:main1}.
 $\hfill\Box$

\medskip

 We conclude this section by showing that $V_q(\A_t^G(f_1,f_2))$ is, in general, not of the restricted weak type $(\infty,\infty,\infty)$.
  This means that the $L^\infty \times L^\infty\rightarrow BMO_\d$ estimate proved in Section \ref{Sec4} cannot be strengthened to $L^\infty\times L^\infty \rightarrow L^\infty$ estimate even for $f_1,f_2$ being characteristic functions of measurable sets of finite measures. We do this in the case $G=\widetilde{B}(0,1):=\widetilde{B}$, the unit ball of $\R^{2d}.$
To this end, we construct two sequences $(E_n)_n$ and $(F_n)_n$ of measurable sets of finite measures in $\R^d$ such that
$$\|V_q(\A_t^{\widetilde{B}}:\, t>0)(\1_{E_n},\1_{F_n})\|_{\infty}\gtrsim n,\quad \forall\, n\in \N. $$

Take $\alpha>1$ large enough so that $|[(B_1)^c\times \R^d] \cap \B_\al|>\frac{4}{5}|\B_\al|. $
There exists $\epsilon_0>0$ such that $|[(B_1)^c \times \R^d] \cap [(x,x)+\B_\al]|>\frac{3}{4}|\B_\al|$ for all $x\in B_{\ep_0}$.
 For each $n\in \N,$ define
$$E_n=\bigcup_{i=1}^n B_{\al^{2n+1}}\backslash B_{\al^{2n}}\quad \mbox{and}\quad F_n=B_{\al^{2n+2}}.$$
Then for $1\le i\le 2n+1$ and any $x\in B_{\ep_0}$,
$$\A_{\al^i}^{\widetilde{B}}(\1_{E_n},\1_{F_n})(x)=\frac{|[(x,x)+\B_{\al^i}]\cap [E_n\times F_n]|}{|\B_{\al^i}|}\begin{dcases}
>\frac{3}{4},& i\ \mbox{odd};\\
<\frac{1}{4},& i\ \mbox{even}.
\end{dcases}$$
 Let $t_i=\al^i$ for $i\in \N.$ Then for $0\le i\le 2n$ and $x\in B_{\ep_0}$,
$$\left|\A_{t_{i+1}}^{\widetilde{B}}(\1_{E_n},\1_{F_n})-\A_{t_{i}}^{\widetilde{B}}(\1_{E_n},\1_{F_n})\right| >\frac{1}{2},$$
which implies
$$V_q(\A_t^{\widetilde{B}}(\1_{E_n},\1_{F_n}):\, t>0)(x)>2^{-q+1}n,\quad \forall \ x\in B_{\ep_0}.$$
This shows that $V_q(\A_t^{\widetilde{B}}:\, t>0)$ is not of restricted weak type $(\infty,\infty,\infty).$

\section{Bounds for new bilinear square functions involving conditional expectation}\label{Sec7}

Recall that the square function $\L$ is defined by
$$
\L(f_1,f_2)(x)=\bigg( \sum_{k\in \Z}|\L_k(f_1, f_2)(x)|^2 \bigg)^{\frac{1}{2}},
$$
where $$\L_k(f_1, f_2)(x)=\A_{2^k}^G(f_1,f_2)(x)-\E_k f_1 (x) \cdot \E_k f_2(x).$$

As a byproduct of our arguments, we obtain the $L^{p_1}\times L^{p_2} \to L^p$ boundedness of the bilinear square function $\L$,
which is the content of the following
\begin{thm}\label{square}
The square function $\L$ is bounded from
$L^{p_1} \times L^{p_2}$ to $L^p$ for all
Let $1<p_1,p_2<\infty$ and $1/p=1/p_1+1/p_2,$ bounded from $L^\infty_c \times L^\infty_c$ to $BMO_\d,$
and bounded from $L^1\times L^{p_2}$ to $L^{p,\infty}$ for all $1\leq p_2\le \infty$ and $1/p=1+1/p_2.$
\end{thm}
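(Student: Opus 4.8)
The plan is to obtain Theorem \ref{square} by rerunning the proof of Theorem \ref{thm:main1}, observing that essentially all of the work has already been carried out. First, the $L^\infty\times L^2\to L^2$ boundedness of $\L$ (and, by Remark \ref{rem3.1}, the symmetric $L^2\times L^\infty\to L^2$ bound) is \emph{exactly} the inequality $\|\L(f_1,f_2)\|_2\lesssim\|f_1\|_\infty\|f_2\|_2$ established in Subsection \ref{sub3.1}; moreover $\L$ is bi-sublinear on $S(\R^d)\times S(\R^d)$, since each $\L_k$ is bilinear and $\|\cdot\|_{\ell^2(\Z)}$ is subadditive. It then remains to upgrade this to the endpoint estimates and interpolate. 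Note that, unlike $V_q(\A_t^G)$, the operator $\L$ involves only the dyadic scales $2^k$, so there is no short-variation component to keep track of.

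For the $L^\infty_c\times L^\infty_c\to BMO_{\d}$ bound I would repeat the long-variation argument of Section \ref{Sec4} with $\lv$ replaced by $\L$. Writing $f_i=f_i^0+f_i^\infty=f_i\1_{Q^*}+f_i\1_{(Q^*)^c}$, the pieces $\L(f_1^0,f_2^0)$, $\L(f_1^0,f_2^\infty)$, $\L(f_1^\infty,f_2^0)$ are handled by the $L^2$ bounds above exactly as $I_1,I_2,I_3$ were. For the global-global piece, take $a_Q=\L(f_1^\infty,f_2^\infty)(c_Q)$; by subadditivity of $\|\cdot\|_{\ell^2(\Z)}$ and $\L_k=\A_{2^k}^G-\E_k(\cdot)\,\E_k(\cdot)$, the quantity $|\L(f_1^\infty,f_2^\infty)(x)-\L(f_1^\infty,f_2^\infty)(c_Q)|$ is dominated by the $\ell^2(\Z)$-norm in $k$ of the differences $\A_{2^k}^G(f_1^\infty,f_2^\infty)(x)-\A_{2^k}^G(f_1^\infty,f_2^\infty)(c_Q)$ plus that of $\E_kf_1^\infty(x)\E_kf_2^\infty(x)-\E_kf_1^\infty(c_Q)\E_kf_2^\infty(c_Q)$. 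The former is $\lesssim\|f_1\|_\infty\|f_2\|_\infty$ by \eqref{new4.1}; the latter vanishes for every $x\in Q$: when $2^k<\ell(Q)$, the dyadic cube of side $2^k$ containing $x$ (resp.\ $c_Q$) lies inside $Q\subset Q^*$, on which $f_i^\infty\equiv0$, so both products are $0$, while when $2^k\ge\ell(Q)$, both $x$ and $c_Q$ lie in the single dyadic cube of side $2^k$ containing $Q$, so $\E_kf_i^\infty(x)=\E_kf_i^\infty(c_Q)$.

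For the weak endpoints I would rerun the Calder\'on--Zygmund argument of Section \ref{Sec5}. The key point is that, with $\Omega=\bigcup_j(Q_{1,j}\cup Q_{2,j})$, each $b_{i,j}$ has mean zero over its (dyadic) cube $Q_{i,j}$, and dyadic cubes nest; hence $\E_kb_i(x)=0$ for every $k\in\Z$ and every $x\in\R^d\setminus\Omega$. Consequently, on $\R^d\setminus\Omega^*$ one has $\L_k(g_1,b_2)=\A_{2^k}^G(g_1,b_2)$, $\L_k(b_1,g_2)=\A_{2^k}^G(b_1,g_2)$ and $\L_k(b_1,b_2)=\A_{2^k}^G(b_1,b_2)$, so these three terms obey the same weak-type bounds as the corresponding long-variation terms already estimated in Subsection \ref{Sec5.1.1} (and, for $p_2=\infty$, in Subsection \ref{Sec5.3}), while $\L(g_1,g_2)$ is handled by the $L^2\times L^\infty\to L^2$ bound. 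This yields the restricted weak types $(1,\infty,1)$, $(\infty,1,1)$, $(1,1,1/2)$ for $\L$.

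Finally, fixing $f_1\in L^\infty\cap L^2$ and interpolating the bounds for $g\mapsto\L(f_1,g)$ (from $L^2$, $L^1$ and $L^\infty_c$) exactly as in the proof of Theorem \ref{thm:main1} gives $L^\infty\times L^s\to L^s$ and, by symmetry, $L^s\times L^\infty\to L^s$ for all $1<s<\infty$, hence the restricted weak types $(\infty,s,s)$ and $(s,\infty,s)$. Since the open convex hull of the points corresponding to $(1,\infty,1)$, $(\infty,1,1)$, $(1,1,1/2)$, $(\infty,s,s)$ and $(s,\infty,s)$ covers every $(1/p_1,1/p_2)$ with $1<p_1,p_2<\infty$, the multilinear real interpolation of Lemma \ref{interpolation} delivers the $L^{p_1}\times L^{p_2}\to L^p$ bound. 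The main obstacle here is not new: it is precisely the control of the discrepancy $\A_{2^k}^G-\E_k\otimes\E_k$, i.e.\ of $\L$ itself, which was the crux of Sections \ref{Sec3}--\ref{Sec5}; in the present theorem one only needs to verify, as above, that the conditional-expectation terms create no additional difficulty.
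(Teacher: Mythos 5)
Your proposal is correct and follows essentially the same route as the paper's proof: use the $L^\infty\times L^2\to L^2$ bound from Section~\ref{Sec3}, establish the $BMO_\d$ endpoint by the same local/global split with $a_Q=\L(f_1^\infty,f_2^\infty)(c_Q)$ and the observation that the conditional-expectation terms vanish or cancel, handle the weak $L^1$ endpoints by the same Calder\'on--Zygmund decomposition after noting $\E_k b_i\equiv 0$ off $\Omega^*$, and then interpolate. The only cosmetic difference is that you observe $\E_k b_i$ already vanishes off $\Omega$ via the nesting of dyadic cubes and mean-zero of each $b_{i,j}$, whereas the paper phrases the same fact with a two-case argument on $\R^d\setminus\Omega^*$; both are correct.
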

\begin{proof}
By arguments similar to that given in the proof of Theorem \ref{thm:main1}, it suffices to show the $L^2\times L^\infty\to L^2$, $L^\infty \times L^\infty \to BMO$,
and $L^1\times L^{p_2}\to L^{p,\infty}$ bounds of $\L$. The $L^2\times L^\infty\to L^2$ has been established in Section \ref{Sec3}.

Let us now show the $L^\infty \times L^\infty \to BMO$ bound. Following similar arguments as in Section \ref{Sec4}, we have
\begin{align*}
 \frac{1}{|Q|} \int_Q |\L(f_1,f_2)-a_Q|
 & \le   \frac{1}{|Q|} \int_Q |\L(f_1^0,f_2^0)| + \frac{1}{|Q|} \int_Q |\L(f_1^0,f_2^\infty)| \\
  & \qquad + \frac{1}{|Q|} \int_Q |\L(f_1^\infty,f_2^0)| + \frac{1}{|Q|} \int_Q |\L(f_1^\infty, f_2^\infty)-\L(f_1^2, f_2^\infty)(c_Q)| \\
 &  =: II_1+II_2+II_3+II_4.
\end{align*}

The required bounds for $II_1, II_2$ and $II_3$ follows easily from the $L^\infty \times L^2\rightarrow L^2$ boundedness of $\L$ as before.
The estimate for $II_4$ will follow if we show that,
for any $x\in Q,$
\begin{align}\label{e7.1}
\begin{split}
&\Big(\sum_{k\in \Z} |[\A_{2^k}^G (f_1^\infty,f_2^\infty)(x) - \E_k f_1^\infty(x) \cdot \E_k f_2^\infty(x)] \\
&\qquad -[\A_{2^k}^G (f_1^\infty,f_2^\infty)(c_Q)-\E_k f_1^\infty(c_Q) \cdot \E_k f_2^\infty(c_Q)] |^2 \Big)^{1/2}
 \lesssim \|f_1\|_\infty \|f_2\|_\infty,
\end{split}
\end{align}
where the implicit constant is independent of $x$ and $Q.$

We consider two cases as before. If $2^k<\ell(Q),$ then for any $x\in Q$,
both $\E_k f_1^\infty$ and $\E_k f_2^\infty$ are supported in $\R^d\backslash Q$,
so
$$\E_k f_1^\infty(x)\cdot \E_k f_2^\infty(x)=\E_k f_1^\infty(c_Q) \cdot \E_k f_2^\infty(c_Q)=0.$$

If $2^k\ge \ell(Q)$, then
in this case, there exists an atom in $\mathscr{D}_k$ that contains $Q,$
so $\E_k f_1^\infty(x) \cdot \E_k f_2^\infty(x)=\E_k f_1^\infty(c_Q) \cdot \E_k f_2^\infty(c_Q).$

In both cases, the conditional expectations on the left side of \eqref{e7.1} are equal to zero, and hence \eqref{e7.1} follows from \eqref{new4.1}. This completes the proof of the $L^\infty \times L^\infty \to BMO$ bound of $\L.$

Finally, let us show the $L^1\times L^{p_2}\to L^{p,\infty}$ bounds of $\L$.
We will see that again the conditional expectation will play no role and the estimates follows from
corresponding estimates in Section \ref{Sec4}.
By the Calder\'{o}n-Zygmund decomposition, we have
\begin{align*}
|\{\L(f_1,f_2)(x)>\al\}|
&\le  |\{\L(g_1,g_2)(x)>\al/4\}| + |\{\L(g_1, b_2)(x)>\al/4\}|\\
&\qquad+|\{\L(b_1,g_2)(x)>\al/4\}|+|\{\L(b_1,b_2)(x)>\al/4\}|,
\end{align*}
The first term can be treated as in Section \ref{Sec4} by using the $L^2\times L^\infty \to L^2$ bound of $\L$, together with the properties of good functions $g_1$ and $g_2$.

To estimate the second term, by \eqref{comple}, it suffices to prove
\begin{equation}\label{e7.2}
|\{x\in \R^d\backslash \Omega^*: (\sum_k |\A_{2^k}^G(g_1, b_2)(x)-\E_kg_1(x) \cdot \E_k b_2(x)|^2)^{1/2} >\al/8\}|\lesssim \al^{-p}.
\end{equation}
Write
$$
b_2=\sum_{n\in \Z}\, \sum_{j:\, Q_{2,j} \in \Q_n^2} b_{2,j} =\sum_{n\in \Z} h_{2,n}.
$$
We shall show that for $x\notin \Omega^*,$ $\E_k b_2(x)=0$ for every $k.$
Indeed, if $k\le n,$ then $\E_k h_{2,n}(x)=0$ since the atom of $\mathscr{D}_k$ containing $x$
is disjoint from the support of $h_{2,n};$ if $n<k,$ for each $Q_{2,j}\in \Q_n^2,$
there exists an atom of $\mathscr{D}_k$ containing $Q_{2,j}.$ Then $\E_k b_{2,j}=0$ due to the cancellation condition of $b_{2,j},$
and hence $\E_k h_{2,n}(x)=0$.
Now \eqref{e7.2} is equivalent to \eqref{new5.2} established in Section \ref{Sec5}.
The other terms can be treated in the same way.

This concludes the proof of the $L^1\times L^{p_2}\to L^{p,\infty}$ bounds of $\L$, and hence Theorem \ref{square}.
\end{proof}
\section{Applications}\label{Sec8}

\subsection{{Discrete bilinear averaging operators}}

Let $1<p_1,p_2<\infty$. Consider the discrete bilinear averaging operator defined by
$$\mathbf{A}_t^{G,\d}(f_1, f_2)(j)=\frac{1}{\sharp(G_t\,\cap\, \Z^{2d})} \sum_{(k,m)\in G_t\, \cap\, \Z^{2d}} f_1(j-k)\, f_2(j-m),\quad t> 0,\ j\in \Z^d$$
for $f_1\in \ell^{p_1}(\Z^d)$ and $f_2\in \ell^{p_2}(\Z^d)$,
where $\sharp(S)$ is the cardinality of the set $S.$

By arguments similar to that used in the proof of Theorem \ref{thm:main1}, we can prove
\begin{cor}
Let $2<q<\infty.$ Then for any $1<p_1,p_2<\infty,$ $1/2<p<\infty$ with $1/p=1/p_1+1/p_2$, there exists a constant $C_{p,q}$ such that
$$
\|V_q((\A_t^{G,\d})_{t> 0}) (f_1, f_2)\|_{\ell^p} \le C_{p,q} \|f_1\|_{\ell^{p_1}} \|f_2\|_{\ell^{p_2}}.
$$
\end{cor}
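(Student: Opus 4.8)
The plan is to deduce the corollary from Theorem~\ref{thm:main1} together with a transference argument that handles the dyadic (long–variation) scales cheaply, plus a direct repetition of the short–variation analysis of Section~\ref{Sec3} in $\Z^d$. First I normalize $G$ by a dilation so that \eqref{1.1} holds; this is harmless because $\A_t^{G_{t_0},\d}(f_1,f_2)(j)=\A_{t_0t}^{G,\d}(f_1,f_2)(j)$, so $V_q(\A_t^{G,\d}(f_1,f_2):t>0)$ is unchanged. Next I isolate the small scales: for $0<t\le1$ the set $G_t\cap\Z^{2d}$ lies in the unit ball of $\R^{2d}$ and so has at most a dimensional number $N_d$ of points, whence $t\mapsto\A_t^{G,\d}(f_1,f_2)(j)$ is a step function on $(0,1]$ with at most $N_d$ jumps, each a convex combination of numbers $f_1(j-k)f_2(j-m)$ with $|(k,m)|\le1$; therefore
$$V_q\big(\A_t^{G,\d}(f_1,f_2)(j):0<t\le1\big)\lesssim_d\big(\max_{|k|\le1}|f_1(j-k)|\big)\big(\max_{|m|\le1}|f_2(j-m)|\big),$$
whose $\ell^p$ norm is $\lesssim_d\|f_1\|_{\ell^{p_1}}\|f_2\|_{\ell^{p_2}}$ by H\"older's inequality for $1<p_1,p_2<\infty$. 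Thus it remains to treat $t>1$, where, exactly as in Section~\ref{Sec3}, one has the pointwise bound $V_q(\A_t^{G,\d}(f_1,f_2):t>1)\lesssim\mathcal{LV}_q^{\d}(f_1,f_2)+\mathcal{SV}_q^{\d}(f_1,f_2)$ into discrete long and short variation operators built from the scales $2^k$ ($k\ge0$) and the blocks $(2^k,2^{k+1}]$.

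For the long part I would use transference. Put $Q_j=j+[0,1)^d$ and let $\iota\colon\ell^p(\Z^d)\to L^p(\R^d)$ be the isometric embedding $\iota f=\sum_jf(j)\1_{Q_j}$. For $x\in Q_j$ and $k\ge0$, both $\A_{2^k}^{G,\d}(f_1,f_2)(j)$ and $\A_{2^k}^G(\iota f_1,\iota f_2)(x)$ are weighted averages of the numbers $f_1(j-k')f_2(j-m')$, and the two families of weights differ only on the $O(2^{k(2d-1)})$ unit cells meeting $\partial G_{2^k}$; since $|G_{2^k}|\sim2^{2dk}$ the total mass of the discrepancy is $O(2^{-k})$, whence
$$\big|D_{2^k}(j,x)\big|:=\big|\A_{2^k}^G(\iota f_1,\iota f_2)(x)-\A_{2^k}^{G,\d}(f_1,f_2)(j)\big|\lesssim2^{-k}\,\M^{\d}f_1(j)\,\M^{\d}f_2(j),$$
where $\M^{\d}$ is the discrete Hardy--Littlewood maximal operator. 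Then for every $x\in Q_j$,
$$\mathcal{LV}_q^{\d}(f_1,f_2)(j)\le\mathcal{LV}_q(\iota f_1,\iota f_2)(x)+\sum_{k\ge0}\big|D_{2^k}(j,x)\big|\lesssim\mathcal{LV}_q(\iota f_1,\iota f_2)(x)+\M^{\d}f_1(j)\,\M^{\d}f_2(j),$$
and $\mathcal{LV}_q(\iota f_1,\iota f_2)(x)=V_q(\A_{2^k}^G(\iota f_1,\iota f_2)(x):k\ge0)\le V_q(\A_t^G(\iota f_1,\iota f_2)(x):t>0)$. Raising this to the power $p$, integrating over $x\in Q_j$ (the left side being independent of $x$), summing over $j$, and using Theorem~\ref{thm:main1}(i) together with $\|\iota f_i\|_{L^{p_i}}=\|f_i\|_{\ell^{p_i}}$ for the first term, and H\"older's inequality plus the $\ell^{p_i}$-boundedness of $\M^{\d}$ ($p_i>1$) for the second, yields $\|\mathcal{LV}_q^{\d}(f_1,f_2)\|_{\ell^p}\lesssim\|f_1\|_{\ell^{p_1}}\|f_2\|_{\ell^{p_2}}$; for $1/2<p<1$ one replaces the triangle inequality throughout by $\ell^p$-quasi-norm subadditivity. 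This route sidesteps redoing the hardest machinery (the square function $\L$, the paraproduct decomposition, the Carleson measure estimates) in the discrete setting.

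For the short part there is no analogous shortcut, so I would repeat the short–variation subsection of Section~\ref{Sec3} with the dyadic filtration of $\R^d$ replaced by that of $\Z^d$ (finest scale $2^0$, only coarsening). All the tools have verbatim discrete analogues: the bilinear almost orthogonality principle (Lemma~\ref{almost}); the bilinear maximal function $[h_1,h_2]^+$ with its $\ell^2$ bound \eqref{new3.2} and the pointwise domination Lemma~\ref{lem:max}; and the geometric estimates \eqref{1.2}, \eqref{313} and the bounds on $|\H(G_{2^k},Q)|$, whose orders of magnitude in $\R^{2d}$ are unaffected by \eqref{1.1}. Writing $f_i=\sum_{n\ge0}d_{i,n}$ (valid since $f_i\in\ell^2$) and running the argument gives the discrete $\ell^\infty\times\ell^2\to\ell^2$ and $\ell^2\times\ell^\infty\to\ell^2$ bounds for $\mathcal{SV}_q^{\d}$; the $\ell^1$-type endpoints come from the Grafakos--Torres Calder\'on--Zygmund decomposition (available on $\Z^d$) as in Section~\ref{Sec5}, and the $\ell^\infty\times\ell^\infty$ endpoint as in Section~\ref{Sec4}. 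Feeding the resulting restricted-weak-type data into the multilinear interpolation of Lemma~\ref{interpolation} produces $\|\mathcal{SV}_q^{\d}(f_1,f_2)\|_{\ell^p}\lesssim\|f_1\|_{\ell^{p_1}}\|f_2\|_{\ell^{p_2}}$ for all $1<p_1,p_2<\infty$, and combining with the long part and the small-scale bound finishes the proof.

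The hard part will be exactly this last step: the short variation cannot be transferred from Theorem~\ref{thm:main1}, because its dyadic-block gains rest on the cancellation of the martingale differences $d_{i,n}$ against the thin boundary layer of $G_t$ (the factors $2^{-(n-k)(2-2/l)}$ in \eqref{313}), not on the mere size of $\A_{t_{i+1}}^G-\A_{t_i}^G$, so the whole cancellation argument has to be carried out in $\Z^d$. The only genuinely new verification needed there is that the number of $\Z^{2d}$-cells meeting $\partial G_t$ is $O(t^{2d-1})$ and that $|I((x,x)+G_t,n)|\lesssim 2^{n}t^{2d-1}$ for $2^n\le t$, both immediate from \eqref{1.1}; the remaining bookkeeping—the quasi-Banach range $p<1$ and checking that the five points $(1,\infty,1)$, $(\infty,1,1)$, $(1,1,1/2)$, $(\infty,s,s)$, $(s,\infty,s)$ meet the convex-hull hypothesis of Lemma~\ref{interpolation}—is as in Section~\ref{Sec6}.
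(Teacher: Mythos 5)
Your route differs from the paper's. The paper simply asserts that Theorem~\ref{thm:main1} can be reproved verbatim over $\Z^d$; you instead propose to \emph{transfer} the long variation from Theorem~\ref{thm:main1} via the cell embedding $\iota$ and only redo the short variation discretely. That split is a reasonable idea and, if it worked, would be tidier. However, the transference step contains a genuine error. The claimed pointwise bound
\[
\big|D_{2^k}(j,x)\big|
=\big|\A_{2^k}^G(\iota f_1,\iota f_2)(x)-\A_{2^k}^{G,\d}(f_1,f_2)(j)\big|
\ \lesssim\ 2^{-k}\,\M^{\d}f_1(j)\,\M^{\d}f_2(j)
\]
is false. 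Your justification (``total mass of the weight discrepancy is $O(2^{-k})$'') only gives $|D_{2^k}(j,x)|\le (\text{total discrepancy mass})\cdot\sup_{(a,b)\in\text{shell}}|f_1(a)f_2(b)|$, and the supremum of $|f_1f_2|$ over the lattice shell near $\partial G_{2^k}$ is not controlled by $\M^\d f_1(j)\M^\d f_2(j)$. Concretely, take $f_1=f_2=\1_{\{0\}}$ and $j$ with $|j|\sim 2^k$ so that $(j,j)$ sits on the boundary shell of $G_{2^k}$: then the boundary discrepancy in $D_{2^k}(j,x)$ is of size $\sim 2^{-2dk}$, whereas $\M^\d f_1(j)\M^\d f_2(j)\sim 2^{-2dk}$, so $2^{-k}\M^\d f_1(j)\M^\d f_2(j)\sim 2^{-(2d+1)k}$ is smaller by a factor $2^{k}$. (The interior part of the discrepancy, coming from the mismatch $\frac1{|G_{2^k}|}$ versus $\frac1{\sharp(G_{2^k}\cap\Z^{2d})}$, does obey the $2^{-k}\M^\d\M^\d$ bound; it is the boundary part that does not.)

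The transference idea can still be salvaged, but not by a pointwise bound. Split $D_{2^k}=D_{2^k}^{\mathrm{int}}+D_{2^k}^{\mathrm{bdry}}$. The interior part obeys $|D_{2^k}^{\mathrm{int}}|\lesssim 2^{-k}\M^\d f_1\,\M^\d f_2$ as above. For the boundary part you should work in norm rather than pointwise: writing the boundary shell as $\partial_k$ (with $\sharp\partial_k\lesssim 2^{(2d-1)k}$), one has for $p\ge 1$, by the triangle inequality in $\ell^p$ and H\"older,
\[
\Big\|\,\tfrac{1}{2^{2dk}}\sum_{(k',m')\in\partial_k}|f_1(\cdot-k')||f_2(\cdot-m')|\,\Big\|_{\ell^p}
\le \frac{\sharp\partial_k}{2^{2dk}}\,\|f_1\|_{\ell^{p_1}}\|f_2\|_{\ell^{p_2}}
\lesssim 2^{-k}\|f_1\|_{\ell^{p_1}}\|f_2\|_{\ell^{p_2}},
\]
which sums in $k$. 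In the quasi-Banach range $1/2<p<1$ the $p$-power triangle inequality costs a factor $(\sharp\partial_k)^{1/p}/2^{2dk}$, which for $d\ge 2$ is no longer summable, so one cannot argue term by term; there you need a separate argument for the boundary operator (for instance a weak-type $\ell^1\times\ell^1\to\ell^{1/2,\infty}$ bound via a Calder\'on--Zygmund decomposition, feeding into the multilinear interpolation of Lemma~\ref{interpolation}), analogous to what you already do for the short variation. Finally, note the sign mismatch between $f_1(x+y_1)$ in $\A_t^G$ and $f_1(j-k)$ in $\A_t^{G,\d}$: the correct continuous comparison object is $\A_t^{-G}(\iota f_1,\iota f_2)$, which is harmless since Theorem~\ref{thm:main1} applies to any convex body. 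The short-variation plan and the small-scale reduction look correct.
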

\begin{rem}
This result provides a bilinear analogue of the result obtained by Jones-Rosenblatt-Wierdl \cite{JRW2}.
\end{rem}
\subsection{Bilinear averaging operators of Demeter-Tao-Thiele}
 Demeter, Tao, and Thiele \cite{DTT} studied
a general class of multilinear averaging operators and proved maximal inequalities for them.
We consider a special case where the averaging operator takes the form
$$
M_{\La,t}(f_1,f_2)(x)=\frac{1}{t^{2d}}\int_{\substack{|u_1|< t\\ |u_2|< t}} f_1(x+\la_{1,1}u_1+\la_{1,2}u_2)f_2(x+\la_{2,1}u_1+\la_{2,2}u_2)\, du_1\, du_2.
$$
Here $\Lambda=\{\lambda_{i,j}\}_{2\times 2}$ is assumed to be nonsingular.

This operator can be expressed as an averaging operator over convex bodies. Indeed, let $\Ga$ be the inverse of $\Lambda$, which is also nonsingular.
Then the following set
$$G_\Ga=\{(y_1,y_2)\in \R^{2d}: |\ga_{1,1}y_1+\ga_{1,2}y_2|< 1,\, |\ga_{2,1}y_1+\ga_{2,2}y_2|< 1\}$$
is a symmetric convex body in $\R^{2d}$, whose ``volume'' is given by the absolute value of its determinant $|\det(\Ga)|$.
Changing the variable we have
\begin{align*}
M_{\La,t}(f_1,f_2)(x)&=\frac{1}{|\det(\Ga)|\cdot t^{2d}}\int_{(G_\Ga)_t} f_1(x+y_1)f_2(x+y_2)\, dy_1\, dy_2\\
&=\frac{1}{|(G_\Ga)_t|}\int_{(G_\Ga)_t} f_1(x+y_1)f_2(x+y_2)\, dy_1\, dy_2 = \A^{G_\Ga}_t(f_1,f_2)(x).
\end{align*}
Then the following corollary follows immediately from Theorem \ref{thm:main1}.

\begin{cor}

Let $2< q<\infty.$ If $\Lambda$ is nonsingular,  then for any $1<p_1,p_2<\infty,$ $1/2<p<\infty$ with $1/p=1/p_1+1/p_2$,  there exists a constant $C_{p,q}$ such that
$$
\|V_q(M_{\La,t})_{t> 0}) (f_1, f_2)\|_{L^p(\R^d)} \le C_{p,q} \|f_1\|_{L^{p_1}(\R^d)} \|f_2\|_{L^{p_2}(\R^d)}.
$$
\end{cor}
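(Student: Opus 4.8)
The plan is to deduce the corollary directly from Theorem \ref{thm:main1} through the identity $M_{\La,t}(f_1,f_2)=\A_t^{G_\Ga}(f_1,f_2)$ recorded above, the only point requiring verification being that $G_\Ga$ qualifies as a convex body in $\R^{2d}$ in the sense of Theorem \ref{thm:main1}, i.e., a nonempty bounded convex open set. The set $G_\Ga$ contains the origin (so it is nonempty), and, being the intersection of the two open slabs $\{(y_1,y_2):|\ga_{1,1}y_1+\ga_{1,2}y_2|<1\}$ and $\{(y_1,y_2):|\ga_{2,1}y_1+\ga_{2,2}y_2|<1\}$, is open and convex. Boundedness is exactly where the nonsingularity of $\La$ (equivalently $\Ga=\La^{-1}$) enters: the linear map $L\colon\R^{2d}\to\R^{2d}$ defined by $L(y_1,y_2)=(\ga_{1,1}y_1+\ga_{1,2}y_2,\ \ga_{2,1}y_1+\ga_{2,2}y_2)$ equals $\Ga\otimes I_d$ and has determinant $(\det\Ga)^d\neq0$, hence is invertible, so $G_\Ga=L^{-1}\big(B(0,1)\times B(0,1)\big)$ is bounded. (Were $\La$ singular, $G_\Ga$ would be an unbounded slab and the reduction would fail, which is precisely why the hypothesis is imposed.)

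With $G_\Ga$ a bona fide convex body, the identity above gives, for every $t>0$ and every $x\in\R^d$, that $M_{\La,t}(f_1,f_2)(x)=\A_t^{G_\Ga}(f_1,f_2)(x)$, so the two families coincide pointwise and therefore
\[
V_q\big(M_{\La,t}(f_1,f_2):t>0\big)(x)=V_q\big(\A_t^{G_\Ga}(f_1,f_2):t>0\big)(x),\qquad x\in\R^d.
\]
I would then apply Theorem \ref{thm:main1}(i) to the convex body $G=G_\Ga$: for $2<q<\infty$, $1<p_1,p_2<\infty$ and $1/2<p<\infty$ with $1/p=1/p_1+1/p_2$, there is a constant $C=C(p_1,p_2,q,d,\Ga)$ with $\|V_q(\A_t^{G_\Ga}(f_1,f_2):t>0)\|_{L^p}\le C\|f_1\|_{L^{p_1}}\|f_2\|_{L^{p_2}}$. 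Combined with the displayed identity this is precisely the claimed bound.

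I do not anticipate any genuine obstacle: all of the analytic work has already been carried out in Theorem \ref{thm:main1}, and the only care needed is the elementary boundedness of $G_\Ga$, which reduces to invertibility of $\Ga$. The sole bookkeeping subtlety is tracking the normalizing constant relating $t^{2d}$, $|\det\Ga|$ and $|(G_\Ga)_t|$ in the substitution $(y_1,y_2)=(\La\otimes I_d)(u_1,u_2)$; this affects only the value of $C_{p,q}$ and not the statement.
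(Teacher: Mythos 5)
Your proposal is correct and follows essentially the same route as the paper: rewrite $M_{\La,t}$ as $\A_t^{G_\Ga}$ via the change of variable, verify $G_\Ga$ is a convex body using the invertibility of $\Ga=\La^{-1}$, and invoke Theorem \ref{thm:main1}(i). Your elaboration that $G_\Ga$ is bounded precisely because $\Ga\otimes I_d$ is invertible is a welcome detail that the paper leaves implicit.
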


\subsection{Ergodic bilinear averaging operators}
Let $(\Omega, \mu)$ be a measurable space.
For $x\in \R^d,$ let $T^x: \Omega\rightarrow \Omega$ be a family of measure preserving transformations on $(\Om,\mu)$ which satisfy the following semigroup properties: $T^{x+y}=T^x\cdot T^y,$ $\forall \ x,y\in \R^{d}$.

For $t>0$, $f_1,f_2\in L^1_{loc}(\Omega),$ define a family of ergodic bilinear averaging operators by:
$$\mathcal{A}_t(f_1,f_2)(\w)=\frac{1}{|G_t|}\int_{G_t}  f_1(T^x\w)\cdot f_2(T^y\w)\, dx\, dy,\quad \w\in \Om.$$

\begin{cor}\label{cor7.2}
Let $2< q<\infty.$ Then for any $1<p_1,p_2<\infty,$ $1/2<p<\infty$ with $1/p=1/p_1+1/p_2$,  there exists a constant $C_{p,q}$ such that
$$
\|V_q((\mathcal{A}_t)_{t> 0}) (f_1, f_2)\|_{L^p(\Om)} \le C_{p,q} \|f_1\|_{L^{p_1}(\Om)} \|f_2\|_{L^{p_2}(\Om)}.
$$
\end{cor}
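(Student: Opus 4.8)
The corollary follows from Theorem \ref{thm:main1}(i) by a bilinear version of the Calder\'{o}n transference principle. Throughout I assume, as is standard in this setting, that the action $(x,\w)\mapsto T^x\w$ is jointly measurable; then $F_i^\w(x):=f_i(T^x\w)$ is a jointly measurable function of $(x,\w)$, and by Fubini together with the measure preservation of the $T^x$ one has $F_i^\w\in L^{p_i}_{\mathrm{loc}}(\R^d)$ for $\mu$-a.e.\ $\w$, so that $\mathcal A_t(f_1,f_2)(\w)$ is well defined (after normalizing $G$ as in \eqref{1.1}, so that $G_t\subset B_t\times B_t$). The semigroup relation $T^{x+z}=T^xT^z$ gives the pointwise identity
\[
\mathcal A_t(f_1,f_2)(T^z\w)=\A_t^G(F_1^\w,F_2^\w)(z),\qquad z\in\R^d,
\]
valid for $\mu$-a.e.\ $\w$ and every $z$, and hence $V_q(\mathcal A_t(f_1,f_2)\colon t>0)(T^z\w)=V_q(\A_t^G(F_1^\w,F_2^\w)\colon t>0)(z)$.

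First I would pass to a truncated variation. For $L>0$ let $V_q^{(L)}$ denote the $q$-variation seminorm taken only over increasing sequences contained in $(0,L]$; then $V_q^{(L)}\uparrow V_q$ as $L\to\infty$ (continuity of $t\mapsto\A_t^G(f_1,f_2)(x)$ also allows one to restrict the defining supremum to sequences with rational entries, so no measurability issue arises), and by monotone convergence it suffices to bound $\|V_q^{(L)}(\mathcal A_t(f_1,f_2))\|_{L^p(\Om)}$ uniformly in $L$. Fix $L$ and a large radius $R>L$. By \eqref{1.1}, for $z\in B(0,R)$ and $t\le L$ the average $\A_t^G(F_1^\w,F_2^\w)(z)$ depends only on the restrictions of $F_1^\w,F_2^\w$ to $B(0,R+L)$; writing $F_{i,R}^\w:=F_i^\w\1_{B(0,R+L)}$ we get
\[
\int_{B(0,R)}\big[V_q^{(L)}(\mathcal A_t(f_1,f_2))(T^z\w)\big]^p\,dz=\int_{B(0,R)}\big[V_q^{(L)}(\A_t^G(F_{1,R}^\w,F_{2,R}^\w))(z)\big]^p\,dz\le\big\|V_q(\A_t^G(F_{1,R}^\w,F_{2,R}^\w))\big\|_{L^p(\R^d)}^p,
\]
and Theorem \ref{thm:main1}(i) (applicable since $1<p_1,p_2<\infty$) bounds the last quantity by $C\,\|F_{1,R}^\w\|_{p_1}^p\,\|F_{2,R}^\w\|_{p_2}^p$.

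Next I would integrate in $\w$. On the left, Fubini and the measure preservation of $T^z$ give $\int_\Om[V_q^{(L)}(\mathcal A_t(f_1,f_2))(T^z\w)]^p\,d\mu(\w)=\|V_q^{(L)}(\mathcal A_t(f_1,f_2))\|_{L^p(\Om)}^p$ independently of $z$, so the $z$-integral contributes the factor $|B(0,R)|$. On the right, since $p/p_1+p/p_2=1$, H\"older's inequality in $\w$ with exponents $p_1/p$ and $p_2/p$, followed by Fubini and measure preservation, yields
\[
\int_\Om\|F_{1,R}^\w\|_{p_1}^p\,\|F_{2,R}^\w\|_{p_2}^p\,d\mu(\w)\le|B(0,R+L)|\,\|f_1\|_{L^{p_1}(\Om)}^p\,\|f_2\|_{L^{p_2}(\Om)}^p.
\]
Combining the two displays gives $|B(0,R)|\,\|V_q^{(L)}(\mathcal A_t(f_1,f_2))\|_{L^p(\Om)}^p\le C\,|B(0,R+L)|\,\|f_1\|_{L^{p_1}(\Om)}^p\,\|f_2\|_{L^{p_2}(\Om)}^p$; letting $R\to\infty$ kills the ratio $|B(0,R+L)|/|B(0,R)|\to1$, and then letting $L\to\infty$ (monotone convergence) gives the asserted inequality.

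The argument is routine once Theorem \ref{thm:main1} is available; I do not expect a genuine obstacle. The two points that require a little care are the localization step, which relies on the normalization \eqref{1.1} guaranteeing that $G_t$ has diameter $O(t)$ so that the truncated averages are ``finite range,'' and the bookkeeping for the monotone approximation $V_q^{(L)}\uparrow V_q$ together with the joint-measurability hypothesis on the flow $(T^x)_{x\in\R^d}$. The same scheme also transfers the weak-type endpoints and the $BMO_\d$ endpoint of Theorem \ref{thm:main1} if one wishes to extend the corollary to the remaining ranges of $(p_1,p_2)$.
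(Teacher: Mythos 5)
Your proposal is correct and follows essentially the same transference argument as the paper: you reduce to a truncated variation, localize the orbit functions $F_i^\omega$ using the finite range of $\A_t^G$ guaranteed by the normalization \eqref{1.1}, apply Theorem \ref{thm:main1}(i) fiberwise in $\omega$, invoke H\"older with exponents $p_1/p$ and $p_2/p$ together with measure preservation, and remove the truncation by sending the outer radius to infinity and then using monotone convergence. The only cosmetic differences from the paper are notational (you integrate $z$ over $B(0,R)$ rather than average over $|z|\le J$, and you write $F_i^\omega(x)=f_i(T^x\omega)$ instead of the paper's $f_i(T^{-z}\omega)$), neither of which changes the substance.
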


\begin{proof}
The proof uses the transference principle.
For a family $(f_t)_{t>0}$ of functions, we use the notation $\|f_t\|_{V_q(t\in (0,R))}:=V_q((f_t)_t:\, t\in (0,R)).$
For any fixed $R>0,$ we have
\begin{align}\label{8.1}
\begin{split}
&\left\|\Big\|(\frac{1}{|G_t|}\int_{G_t}f_1(T^x\om) f_2(T^y\om)\, dx\, dy)\Big\|_{V_q(t\in (0,R))}\right\|_{L^p(\Om)}^p\\
&=\left\|\frac{1}{|G_t|}\int_{G_t}f_1(T^zT^{x-z}\om) f_2(T^z T^{y-z}\om)\, dx\, dy\right\|_{L^p(\Om, V_q(t\in (0,R)))}^p\\
&=\frac{1}{c_dJ^d}\int_{|z|\le J}\Big\|\frac{1}{|G_t|}\int_{G_t}\widetilde{T}^z[f_1(T^{x-z}\cdot) f_2(T^{y-z}\cdot)](\om)\, dx\, dy \Big\|_{L^p(\Om,V_q(t\in (0,R)))}^p\, dz\\
&\lesssim\frac{1}{J^d}\int_{|z|\le J} \bigg\|(\widetilde{T}^z\otimes id)\Big(\frac{1}{|G_t|}\int_{G_t}[f_1(T^{x-z}\cdot)f_2(T^{y-z}\cdot)](\om)\, dx\, dy\Big)\bigg \|_{L^p(\Om,V_q(t\in (0,R)))}^p\, dz.
\end{split}
\end{align}
Notice that
$\|\widetilde{T}^z\|_{L^p(\Om)\rightarrow L^p(\Om)}=1$ implies
$$\|\widetilde{T}^z\otimes id\|_{L^p(\Om, V_q(t\in (0,R)))\rightarrow L^p(\Om, V_q(t\in (0,R)))}=1.$$
Therefore the last term in \eqref{8.1} is equal to
\begin{align*}
&\frac{1}{J^d}\int_{|z|\le J} \Big\| \frac{1}{|G_t|}\int_{G_t}[f_1(T^{x-z}\cdot), f_2(T^{y-z}\cdot)](\om)\,dx\, dy \|_{L^p(\Om,V_q(t\in(0,R)))}^p\, dz\\
&=\frac{1}{J^d}\int_{|z|\le J} \Big\|\mathcal{A}_t[f_1(T^{-z}\cdot), f_2(T^{-z}\cdot)](\om) \Big\|_{L^p(\Om,V_q(t\in(0,R)))}^p\, dz\\
&=\frac{1}{J^d}\int_{\Om}\|\A_t^G [F_1(\cdot,\om), F_2(\cdot,\om)](z) \|_{L^p(\R^d,V_q(t\in (0,R)))}^p \, d\om,
\end{align*}
where $F_1(z,\om)=f_1(T^{-z}\om)\1_{|z|\le J+R}$ and $F_2(z,\om)=f_2(T^{-z}\om)\1_{|z|\le J+R}$.

Applying Theorem \ref{thm:main1}, the last term above is bounded by
\begin{align*}
&\frac{1}{J^d}\int_{\Om}\| F_1(\cdot,\om)\|_{L^{p_1}(\R^d)}^p \|F_2(\cdot,\om) \|_{L^{p_2}(\R^d)}^p\, d\om\\
&\le \frac{1}{J^d} \left(\int_{\Om}\int_{\R^d} |F_1(z,\om)|^{p_1} \,dz \, d\om\right)^{\frac{p}{p_1}}\left(\int_{\Om}\int_{\R^d} |F_2(z,\om)|^{p_2} \,dz\, d\om\right)^{\frac{p}{p_2}}\\
&\le \frac{1}{J^d} \left(\int_{|z|\le J+R} \int_{\Om}|f_1(T^{-z}\om)|^{p_1} \, d\om\, dz\right)^{\frac{p}{p_1}}\left(\int_{|z|\le J+R} \int_{\Om}|f_2(T^{-z}\om)|^{p_2} \, d\om\, dz\right)^{\frac{p}{p_2}}\\
&\lesssim \frac{(J+R)^d}{J^d} \|f_1\|_{L^{p_2}(\Om)}^p \|f_2\|_{L^{p_2}(\Om)}^p.
\end{align*}
Since the estimates holds for all $J>0$, letting $J\rightarrow +\infty,$ we obtain
\begin{align*}
\frac{1}{J^d}\int_{\Om}\| F_1(\cdot,\om)\|_{L^{p_1}(\R^d)}^p \|F_2(\cdot,\om) \|_{L^{p_2}(\R^d)}^p\, d\om \lesssim \|f_1\|_{L^{p_2}(\Om)} ^p \|f_2\|_{L^{p_2}(\Om)}^p.
\end{align*}
Putting all together we have for any $R>0,$
\begin{align*}
&\left\|\Big\|\frac{1}{|G_t|}\int_{G_t}f_1(T^x\om) f_2(T^y\om)\, dx\, dy\Big\|_{V_q(t\in (0,R))}\right\|_{L^p(\Om)}^p \lesssim \|f_1\|_{L^{p_2}(\Om)}^p \|f_2\|_{L^{p_2}(\Om)}^p.
\end{align*}
Letting $R\rightarrow +\infty$ and applying  monotone convergence theorem, Corollary \ref{cor7.2} follows.
\end{proof}

\noindent \textbf{Acknowledgement.} The first author is partially supported by the NSF of China(Nos. 11571160, 11871096), the second author partially supported by the NSF of China(Nos. 11601396, 11431011), and the third author partially supported by NSF of China(No. 11671397).


\end{document}